\documentclass[11pt,a4paper,leqno,twoside,notitlepage]{article}

\pagestyle{myheadings}
\markboth{O. Ungermann}{$L^1$-determined ideals in group algebras}

\usepackage{amsmath}
\usepackage{amsfonts}
\usepackage{amssymb}
\usepackage{amsxtra}
\usepackage{amsthm}
\usepackage{enumerate}
\usepackage{ifthen}

\newboolean{details}


\newenvironment{enumerateroman}
{\begin{enumerate}}
{\end{enumerate}}

\numberwithin{equation}{section}

\theoremstyle{plain}
\newtheorem{thm}[equation]{Theorem}
\newtheorem{prop}[equation]{Proposition}
\newtheorem{lem}[equation]{Lemma}
\newtheorem{cor}[equation]{Corollary}

\theoremstyle{definition}
\newtheorem{defn}[equation]{Definition}
\newtheorem{rem}[equation]{Remark}

\newcommand{\wrt}{w.~r.~t.}

\newcommand{\mC}{{\mathbb C}}

\newcommand{\mR}{{\mathbb R}}

\newcommand{\mcA}{\mathcal{A}}
\newcommand{\mcB}{\mathcal{B}}
\newcommand{\mcC}{\mathcal{C}}

\newcommand{\mcK}{\mathcal{K}}

\newcommand{\mcP}{\mathcal{P}}

\newcommand{\mcS}{\mathcal{S}}
\newcommand{\mcU}{\mathcal{U}}
\newcommand{\fraka}{\mathfrak{a}}
\newcommand{\frakb}{\mathfrak{b}}

\newcommand{\frakg}{\mathfrak{g}}

\newcommand{\frakh}{\mathfrak{h}}
\newcommand{\frakH}{\mathfrak{H}}

\newcommand{\frakm}{\mathfrak{m}}
\newcommand{\frakn}{\mathfrak{n}}
\newcommand{\frakp}{\mathfrak{p}}
\newcommand{\frakq}{\mathfrak{q}}

\newcommand{\frakz}{\mathfrak{z}}

\newcommand{\LG}{L^1(G)}
\newcommand{\LN}{L^1(N)}

\newcommand{\LM}{L^1(M)}

\newcommand{\LH}{L^1(H)}

\newcommand{\CG}{C^{\ast}(G)}
\newcommand{\CN}{C^{\ast}(N)}
\newcommand{\CM}{C^{\ast}(M)}
\newcommand{\CH}{C^{\ast}(H)}
\newcommand{\Ccinfty}[1]{\mcC^{\infty}_0(#1)}

\newcommand{\Cc}[1]{\mcC_0(#1)}
\newcommand{\Cst}{C^{\ast}}

\newcommand{\twoheadlongrightarrow}{\,\,\rightarrow\mspace{-24.0mu}\longrightarrow\,}

\renewcommand{\to}{\longrightarrow}

\newcommand{\onto}{\mbox{$\twoheadlongrightarrow$}}

\newcommand{\mdot}{\!\cdot\!}
\newcommand{\clos}{^{\textbf{---}}}

\providecommand{\ind}{\mathop{\rm ind}\nolimits}
\providecommand{\Ad}{\mathop{\rm Ad}\nolimits}
\providecommand{\coAd}{\mathop{\rm Ad^{\ast}}\nolimits}

\providecommand{\Ann}{\mathop{\rm Ann}\nolimits}
\providecommand{\Lie}{\mathop{\rm Lie}\nolimits}
\providecommand{\Max}{\mathop{\rm Max}\nolimits}
\providecommand{\Prim}{\mathop{\rm Prim}\nolimits}

\providecommand{\lspan}{\mathop{\rm span}\nolimits}

\providecommand{\res}{\mathop{\rm res}\nolimits}

\setboolean{details}{true}

\setlength{\textwidth}{145mm}
\setlength{\oddsidemargin}{10mm}
\setlength{\evensidemargin}{18mm}

\begin{document}

\title{$L^1$-determined ideals in group algebras of
exponential Lie groups}
\author{O. Ungermann}
\date{}
\maketitle

\noindent\textbf{Abstract.} A locally compact group
$G$ is said to be $\ast$-regular if the natural map
$\Psi:\Prim\CG\to\Prim_{\ast}\LG$ is a homeomorphism
with respect to the Jacobson topologies on the primitive
ideal spaces $\Prim\CG$ and $\Prim_{\ast}\LG$. In 1980
J.~Boidol characterized the $\ast$-regular ones among
all exponential Lie groups by a purely algebraic
condition. In this article we introduce the notion
of $L^1$-determined ideals in order to discuss the
weaker property of primitive $\ast$-regularity. We
give two sufficient criteria for closed ideals $I$
of $\CG$ to be $L^1$-determined. Herefrom we deduce a
strategy to prove that a given exponential Lie group
is primitive $\ast$-regular. The author proved in his
thesis that all exponential Lie groups of dimension
$\le 7$ have this property. So far no counter-example
is known. Here we discuss the example $G=B_5$, the
only critical one in dimension $\le 5$.\\\\

\noindent 2000 Mathematics Subject Classification:
43A20; 22D10, 22D20, 22E27.

\section{Introduction}
\label{eLg_sec:basic_definitions}
Let $\mcA$ be Banach $\ast$-algebra and $\Cst(\mcA)$ its
enveloping $\Cst$-algebra in the sense of Dixmier, see
Chapter 2.7 of~\cite{Dix3}. The $\Cst$-norm
on $\Cst(\mcA)$ is given by
$$|a|_{\ast}=\sup\limits_{\pi\in\widehat{\mcA}}\;|\pi(a)|$$
for all $a\in\mcA$ where $\widehat{\mcA}$ is the set of
equivalence classes of topologically irreducible
$\ast\,$-representations of $\mcA$ in Hilbert spaces.
Let $\Prim\Cst(\mcA)$ be the set of primitive ideals in
$\Cst(\mcA)$, and $\Prim_{\ast}\mcA$ the set of kernels
of representations in $\widehat{\mcA}$. For ideals $I$
of $\Cst(\mcA)$ we define their hull
$h(I)=\{P\in\Prim\Cst(\mcA):P\supset I\}$ in
$\Prim\Cst(\mcA)$, and for subsets $X$ of $\Prim\Cst(\mcA)$
their kernel $k(X)=\cap\{P:P\in X\}$ in $\Cst(\mcA)$. In
the sequel all ideals are assumed to be two-sided and
closed in the respective norm. Closed ideals $I$ of
$\Cst$-algebras are automatically involutive and satisfy
$I=k(h(I))$, see Proposition~1.8.2 and Theorem~2.6.1
of~\cite{Dix3}.\\\\
Recall that $\Prim\Cst(\mcA)$ is a topological space \wrt\
the Jacobson topology, i.e., $X\subset\Prim\Cst(\mcA)$ is
closed if and only if there exists an ideal $I$ of
$\Cst(\mcA)$ such that $X=h(I)$. Likewise we can state
the according definitions of hulls and kernels for $\mcA$
and we provide $\Prim_{\ast}\mcA$ with the Jacobson
topology as well. Let $I'$ denote the preimage of the
ideal $I$ under the natural map $\mcA\to\Cst(\mcA)$.
For simplicity we write $I'=I\cap\mcA$. The map
$$\Psi:\Prim\Cst(\mcA)\to\Prim_{\ast}\mcA\;%
\text{ given by }\;\Psi(P)=P'=P\cap\mcA$$
is continuous and surjective and evidently
satisfies $k(\Psi(X))=k(X)\cap\mcA$ and
$h(I)\subset\Psi^{-1}(h(I'))$. The next definition
is basic for the subsequent investigation.

\begin{defn}\label{Bsa_def:A_determined}
A closed ideal $I$ of $\Cst(\mcA)$ is called
$\mcA$-determined if and only if the following
(obviously) equivalent conditions hold:
\begin{enumerateroman}
\item $\;I'\subset J'$ implies $I\subset J$ for all
ideals $J$ of $\Cst(\mcA)$,
\item $\;I'\subset P'$ implies $I\subset P$ for all
$P\in\Prim\Cst(\mcA)$, i.e., $h(I)=\Psi^{-1}(h(I'))$,
\item $\;I'$ is dense in $I$ \wrt\ the $\Cst$-norm,
\item $\;\Cst(\mcA/I')\cong\Cst(\mcA)/I$.
\end{enumerateroman}
\end{defn}

In the introduction of \cite{Boid2} Boidol defined
$\ast$-regularity of Banach $\ast$-algebras. We
restate his definition and add the notion of
primitive $\ast$-regularity.

\begin{defn}\label{Bsa_def:star_regular}
A Banach $\ast\,$-algebra $\mcA$ is called (primitive)
$\ast\,$-regular if and only if every closed (primitive)
ideal of $\Cst(\mcA)$ is $\mcA$-determined.
\end{defn}

The group algebra $\LG$ of a locally compact group $G$ is
a $\ast$-semisimple Banach $\ast\,$-algebra with bounded
approximate identities. We say that $G$ is (primitive)
$\ast$-regular if $\LG$ has this property. Similarly
$\ast$-regularity of (real) Lie algebras $\frakg$ is
defined by means of the (unique) connected, simply
connected Lie group $G$ with $\Lie(G)=\frakg$.\\\\
Part \textit{(ii)} of the next lemma shows that
Definition~\ref{Bsa_def:star_regular} is equivalent
to Boidol's original definition, a characterization
which has already been proved in~\cite{Boid1}.

\begin{lem}\label{Bsa_lem:characterize_regularity}\hfill
\begin{enumerateroman}
\item If $\mcA$ is primitive $\ast\,$-regular, then
$\Psi:\Prim\Cst(\mcA)\to\Prim_{\ast}\mcA$ is injective.
\item A Banach $\ast\,$-algebra $\mcA$ is $\ast\,$-regular
if and only if $\Psi$ is a homeomorphism with respect to the
Jacobson topologies on $\Prim\Cst(\mcA)$ and
$\Prim_{\ast}\mcA$.
\end{enumerateroman}
\end{lem}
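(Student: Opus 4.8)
The plan is to reduce everything to condition \textit{(ii)} of Definition~\ref{Bsa_def:A_determined}, which says that a closed ideal $I$ is $\mcA$-determined precisely when $h(I)=\Psi^{-1}(h(I'))$, together with the standard description of the Jacobson topology: for any subset $X$ the closure is $\overline{X}=h(k(X))$, since $h(J)\supset X$ holds iff $J\subset k(X)$. I shall also use the two identities recorded before Definition~\ref{Bsa_def:A_determined}, namely $k(\Psi(X))=k(X)\cap\mcA$ and the inclusion $h(I)\subset\Psi^{-1}(h(I'))$, and the fact that $I=k(h(I))$ for closed ideals of the $\Cst$-algebra $\Cst(\mcA)$.

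For \textit{(i)}, suppose $\mcA$ is primitive $\ast$-regular and let $P_1,P_2\in\Prim\Cst(\mcA)$ satisfy $\Psi(P_1)=\Psi(P_2)$, i.e.\ $P_1'=P_2'$. Since $P_1$ is $\mcA$-determined we have $h(P_1)=\Psi^{-1}(h(P_1'))$; as $\Psi(P_2)=P_2'=P_1'$ contains $P_1'$, we get $\Psi(P_2)\in h(P_1')$, hence $P_2\in\Psi^{-1}(h(P_1'))=h(P_1)$, that is $P_2\supset P_1$. Interchanging the roles of $P_1$ and $P_2$ yields $P_1\supset P_2$, whence $P_1=P_2$ and $\Psi$ is injective.

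For the forward implication of \textit{(ii)}, assume $\mcA$ is $\ast$-regular. Then in particular every primitive ideal is $\mcA$-determined, so $\mcA$ is primitive $\ast$-regular and $\Psi$ is injective by \textit{(i)}; it is continuous and surjective by the remarks preceding Definition~\ref{Bsa_def:A_determined}. It remains to check that $\Psi$ is closed. A closed subset of $\Prim\Cst(\mcA)$ has the form $h(I)$ with $I=k(h(I))$ a closed ideal, which is $\mcA$-determined by hypothesis; hence $h(I)=\Psi^{-1}(h(I'))$ and, applying $\Psi$ and using surjectivity, $\Psi(h(I))=h(I')$ is closed in $\Prim_{\ast}\mcA$. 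A continuous closed bijection is a homeomorphism.

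For the converse, assume $\Psi$ is a homeomorphism and let $I$ be any closed ideal of $\Cst(\mcA)$; I must verify $h(I)=\Psi^{-1}(h(I'))$, where only the inclusion $\supset$ is in question. Since $\Psi$ is a closed map, $\Psi(h(I))$ is closed, and from $k(\Psi(h(I)))=k(h(I))\cap\mcA=I\cap\mcA=I'$ together with the closure formula its closure is $h(I')$; thus $\Psi(h(I))=h(I')$. Applying $\Psi^{-1}$ and using injectivity gives $h(I)=\Psi^{-1}(h(I'))$, so $I$ is $\mcA$-determined and $\mcA$ is $\ast$-regular. The main point in this direction is precisely the identification $\Psi(h(I))=h(I')$: the trivial inclusion $\Psi(h(I))\subset h(I')$ must be upgraded to equality, and this is exactly where both the closedness of $\Psi$ (from the homeomorphism property) and the kernel identity $k(\Psi(X))=k(X)\cap\mcA$ enter.
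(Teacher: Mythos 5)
Your proof is correct and follows essentially the same route as the paper: part \textit{(i)} by reducing to the definition of $\mcA$-determined (you use the hull condition $h(P)=\Psi^{-1}(h(P'))$ where the paper uses the equivalent density condition $P=\overline{\Psi(P)}$), and part \textit{(ii)} by showing $\Psi$ is a closed continuous bijection in the forward direction and, conversely, by upgrading $\Psi(h(I))\subset h(I')$ to equality via the kernel identity, the closure formula, and closedness of $\Psi$, exactly as in the paper.
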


\begin{proof}
If $\mcA$ is primitive $\ast\,$-regular, then
$P=\overline{\Psi(P)}$ is uniquely determined by
$\Psi(P)$ for all $P\in\Prim\Cst(\mcA)$. This
proves \textit{(i)}. In order to prove \textit{(ii)},
let us suppose that $\mcA$ is $\ast\,$-regular. Since
$\Psi$ is a continuous bijection, it suffices to prove
that $\Psi$ maps closed sets onto closed sets. But if
$X$ is a closed subset of $\Prim\Cst(\mcA)$, then there
exists a closed ideal $I$ of $\Cst(\mcA)$ such that
$X=h(I)$ and we see that $\Psi(X)=h(I')$ is closed in
$\Prim_{\ast}\mcA$ because $I$ is $\mcA$-determined.
Now we prove the opposite implication. Assume that
$\Psi$ is a homeomorphism, $I$ a closed ideal of
$\Cst(\mcA)$, and $P\in\Prim\Cst(\mcA)$ such that
$I'\subset P'$. Define $X=h(I)$. Since $I'=k(\Psi(X))$,
it follows
$$h(I')=h(k(\Psi(X)))=\overline{\Psi(X)}=\Psi(X)$$
because $\Psi$ maps closed sets onto closed sets. Now
$P'\in\Psi(X)$ implies $P\in X$ so that $P\supset I$
because $\Psi$ is injective. This proves the asserted
equivalence.
\end{proof}

Because of its technical importance we state the
following fact as a lemma, but we omit the easy proof.

\begin{lem}\label{Bsa_lem:quotients_of_algebras}
Let $I\subset J$ be closed ideals of $\Cst(\mcA)$
such that $I$ is $\mcA$-determined. Then $J$ is
$\mcA$-determined if and only if the ideal $J/I$ of
$\Cst(\mcA)/I=\Cst(\mcA/I')$ is $\mcA/I'$-determined.
\end{lem}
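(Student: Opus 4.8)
The plan is to base everything on characterization \textit{(iii)} of Definition~\ref{Bsa_def:A_determined}, namely that an ideal is $\mcA$-determined precisely when its preimage in the algebra is dense in it \wrt\ the $\Cst$-norm. Write $\mcB=\mcA/I'$ and recall that, since $I$ is $\mcA$-determined, condition \textit{(iv)} furnishes the identification $\Cst(\mcB)=\Cst(\mcA)/I$ already used in the statement. Let $\pi:\Cst(\mcA)\to\Cst(\mcB)$ be the quotient $\ast$-homomorphism, so that $\ker\pi=I$, let $q:\mcA\to\mcB$ be the canonical quotient map, and let $\iota_\mcA:\mcA\to\Cst(\mcA)$ and $\iota_\mcB:\mcB\to\Cst(\mcB)$ be the natural maps; these fit into a commuting square, $\iota_\mcB\circ q=\pi\circ\iota_\mcA$.

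First I would record the purely algebraic identity $(J/I)'=J'/I'$. Because $I\subset J$ forces $I'\subset J'$, the right-hand side is just the image $q(J')$. A short diagram chase settles the claim: every $b\in\mcB$ lifts to some $a\in\mcA$ with $\iota_\mcB(b)=\pi(\iota_\mcA(a))$, and since $\ker\pi=I\subset J$ we get $\iota_\mcB(b)\in\pi(J)=J/I$ if and only if $\iota_\mcA(a)\in J$, i.e. $a\in J'$; hence the preimage of $J/I$ in $\mcB$ is exactly $q(J')=J'/I'$. With this in hand, characterization \textit{(iii)} rephrases both assertions as density statements: $J$ is $\mcA$-determined iff $J'$ is $\Cst$-dense in $J$, and $J/I$ is $\mcB$-determined iff $J'/I'$ is dense in $\pi(J)$ for the quotient $\Cst$-norm. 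It then remains only to show these two density statements are equivalent.

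The forward implication is immediate, since $\pi$ is norm-decreasing and carries the closed ideal $J$ onto the closed ideal $\pi(J)$, so $\Cst$-density of $J'$ in $J$ passes to density of $\pi(J')=J'/I'$ in $\pi(J)$. The backward implication is the crux, and the only place where the hypothesis on $I$ enters. The argument I would give: for $x\in J$ and $\epsilon>0$, density of $J'/I'$ in $\pi(J)$ yields $a\in J'$ with $\abs{\pi(x)-\pi(a)}_\ast<\epsilon$, and unwinding the quotient norm produces $y\in I$ with $\abs{x-a-y}_\ast<\epsilon$. Now I invoke that $I$ is $\mcA$-determined, so by \textit{(iii)} the ideal $I'$ is $\Cst$-dense in $I$, giving $z\in I'$ with $\abs{y-z}_\ast<\epsilon$; since $z\in I'\subset J'$ and $a\in J'$, the element $a+z$ lies in $J'$ and satisfies $\abs{x-(a+z)}_\ast<2\epsilon$, so $J'$ is $\Cst$-dense in $J$. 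The main obstacle is exactly this lifting step: a lift of a good approximant from $\mcB$ controls $x$ only modulo $I$, and one needs the density of $I'$ in $I$ to absorb the residual error term back into $J'$.
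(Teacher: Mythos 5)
Your proof is correct, and there is nothing in the paper to compare it against: the author states this lemma ``because of its technical importance'' and explicitly omits the proof, so the only question is whether your argument stands on its own, which it does. The two points that carry all the weight are exactly the ones you isolate: the identity $(J/I)'=J'/I'$, settled by the diagram chase using $\ker\pi=I\subset J$ (and legitimized by the fact that the identification $\Cst(\mcA/I')=\Cst(\mcA)/I$, i.e.\ condition \textit{(iv)} for $I$, makes your square commute), and the lifting step in the backward direction, where density of $I'$ in $I$ (condition \textit{(iii)} for $I$) absorbs the error term $y\in I$ produced by unwinding the quotient $\Cst$-norm. For comparison, there is a marginally shorter route through characterization \textit{(i)} instead of \textit{(iii)}: the ideals of $\Cst(\mcA)/I$ are exactly the $\tilde K/I$ with $\tilde K\supset I$ an ideal of $\Cst(\mcA)$, and your identity gives $(\tilde K/I)'=\tilde K'/I'$. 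If $J/I$ is $\mcA/I'$-determined and $\tilde K$ is an arbitrary ideal of $\Cst(\mcA)$ with $J'\subset\tilde K'$, then $I'\subset J'\subset\tilde K'$ forces $I\subset\tilde K$ because $I$ is $\mcA$-determined; hence $(J/I)'\subset(\tilde K/I)'$, so $J/I\subset\tilde K/I$ and $J\subset\tilde K$. The forward direction is just the restriction of \textit{(i)} to ideals containing $I$. This variant avoids the quotient-norm estimate entirely, but it uses the same two ingredients (the primed-ideal identity and the hypothesis on $I$), so the difference from your argument is cosmetic rather than substantive.
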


This lemma can be applied in the following situation:
If $A$ is a closed normal subgroup of $G$ and
$\dot{G}=G/A$, then $T_Af\,(\dot x)=\int_A\,f(xa)\;da$
defines a quotient map of Banach $\ast\,$-algebras from
$\LG$ onto $L^1(\dot{G})$ which extends to a quotient
map from $\CG$ onto $\Cst(\dot{G})$, compare p.~68
of~\cite{Rei}. It is easy to see that $I=\ker_{\CG}T_A$
is $\LG$-determined.

\begin{lem}\label{Bsa_lem:finite_intersections}
A finite intersection of $\mcA$-determined ideals is
$\mcA$-determined.
\end{lem}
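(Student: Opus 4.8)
The plan is to reduce the statement to the case of two ideals and then use one of the equivalent characterizations from Definition~\ref{Bsa_def:A_determined}. By induction it suffices to show that if $I_1$ and $I_2$ are two $\mcA$-determined ideals of $\Cst(\mcA)$, then $I=I_1\cap I_2$ is $\mcA$-determined. I would work with characterization \textit{(ii)}, which asserts that $\mcA$-determinedness of an ideal $I$ is equivalent to $h(I)=\Psi^{-1}(h(I'))$; equivalently, for all $P\in\Prim\Cst(\mcA)$, the containment $I'\subset P'$ forces $I\subset P$.

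First I would record the elementary set-theoretic fact that the hull of an intersection is the union of the hulls, i.e. $h(I_1\cap I_2)=h(I_1)\cup h(I_2)$. This holds because a primitive ideal $P$ is prime, so $P\supset I_1\cap I_2$ implies $P\supset I_1$ or $P\supset I_2$. Next I would analyze the trace on $\mcA$: since passing to $I'=I\cap\mcA$ commutes with intersection, we have $(I_1\cap I_2)'=I_1'\cap I_2'$, and I expect $h\bigl((I_1\cap I_2)'\bigr)=h(I_1')\cup h(I_2')$ to hold in $\Prim_{\ast}\mcA$ by the same primeness argument applied to kernels of irreducible representations of $\mcA$.

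With these two facts in hand, the computation is direct. Using $\Psi^{-1}(X\cup Y)=\Psi^{-1}(X)\cup\Psi^{-1}(Y)$ together with the hypothesis that each $I_j$ is $\mcA$-determined, I would compute
\[
\Psi^{-1}\bigl(h(I')\bigr)=\Psi^{-1}\bigl(h(I_1')\cup h(I_2')\bigr)=\Psi^{-1}\bigl(h(I_1')\bigr)\cup\Psi^{-1}\bigl(h(I_2')\bigr)=h(I_1)\cup h(I_2)=h(I).
\]
Since the inclusion $h(I)\subset\Psi^{-1}(h(I'))$ always holds, the reverse inclusion just established yields equality, which is exactly condition \textit{(ii)} for $I=I_1\cap I_2$.

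The main obstacle is the second displayed hull identity, namely verifying $h\bigl((I_1\cap I_2)'\bigr)=h(I_1')\cup h(I_2')$ inside $\Prim_{\ast}\mcA$. The nontrivial inclusion is $\subseteq$: a kernel $P'$ of an irreducible representation of $\mcA$ containing $I_1'\cap I_2'$ must contain $I_1'$ or $I_2'$. Here I would rely on the primeness of $P'$, which follows from the fact that $P'=P\cap\mcA$ for a primitive ideal $P$ of $\Cst(\mcA)$ and that $\Psi$ is surjective; alternatively one can lift to $\Cst(\mcA)$ via $\Psi$ and use primeness of $P$ there, noting that $I_j'\subset P'$ is equivalent to $I_j\subset P$ because each $I_j$ is assumed $\mcA$-determined. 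This last observation is the cleanest route: it lets me avoid arguing primeness directly in $\mcA$ and instead transport everything back to $\Prim\Cst(\mcA)$, where primeness of $P$ is standard.
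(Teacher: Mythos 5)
Your proof is correct and takes essentially the same route as the paper: after the reduction to two ideals by induction, both arguments hinge on the primeness of an element $P'$ of $\Prim_{\ast}\mcA$ applied to $I_1'\mdot I_2'\subset I_1'\cap I_2'\subset P'$, followed by the $\mcA$-determinedness of each $I_j$ to conclude $I_1\cap I_2\subset P$. Your formulation via hulls and $\Psi^{-1}$ is only a repackaging of the paper's pointwise verification of condition \textit{(ii)}.
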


\begin{proof}
Let $I_1$ and $I_2$ be $\mcA$-determined ideals of
$\Cst(\mcA)$. Let $P\in\Prim\Cst(\mcA)$ such that
$I_1'\mdot I_2'\subset I_1'\cap I_2'\subset P'$.
Since $P'$ is a prime ideal of $\mcA$, it follows
$I_1'\subset P'$ or $I_2'\subset P'$. Since $I_1$
and $I_2$ are $\mcA$-determined, we obtain
$I_1\subset P$ or $I_2\subset P$ and thus
$I_1\cap I_2\subset P$. Consequently $I_1\cap I_2$
is $\mcA$-determined and the assertion of this
lemma follows by induction.
\end{proof}

\begin{rem}
Here are a few examples of $\ast\,$-regular Banach
$\ast$-algebras: If $G$ is a connected locally compact
group such that its Haar measure has polynomial growth,
then $G$ is $\ast\,$-regular. Boidol proved this fact
in Theorem~2 of~\cite{Boid1} based on ideas of Dixmier
in~\cite{Dix2}. Jenkins has shown in Theorem~1.4
of~\cite{Jen} that connected nilpotent Lie groups have
polynomial growth. If $G$ is a metabelian connected
locally compact group, then $G$ is $\ast\,$-regular,
see Theorem~3.5 of~\cite{Boid2}. Moreover the following
is true: If $G$ is a compactly generated, locally
compact group with polynomial growth and if $w$ is a
symmetric weight function on $G$ which satisfies
the non-abelian-Beurling-Domar condition~(BDna)
of~\cite{Dziu_et_alii}, then $L^1(G,w)$ is
$\ast$-regular. Compare Proposition~5.2 and
Theorem~5.8 of~\cite{Dziu_et_alii}.
\end{rem}

In the next paragraphs we formulate sufficient criteria
for ideals of the group algebra $\CG$ of exponential
Lie groups to be $\LG$-determined, see Proposition~%
\ref{eLg_prop:induced_from_regular_subgroup} and
Proposition~\ref{eLg_prop:closed_orbit_is_sufficient}.

\section{Inducing primitive ideals from
a stabilizer}\label{eLg_sec:inducing_ideals}
We shall use the concept of the adjoint algebra
(double centralizer algebra) of a Banach $\ast$-algebra,
compare Paragraph~3 of~\cite{Lep1} and Chapter~2.3
of~\cite{RaeWil}. Let $\Cc{G}$ denote the continuous
functions of compact support on $G$. If $H$ is a
closed subgroup of $G$, then $\Cc{H}$ acts as an
algebra of double centralizers
on $\Cc{G}$ by convolution
\[(a\ast f)\;(x)=\int_H\;a(h)\;f(h^{-1}x)\;dh\]
from the left, and by
\[(f\ast a)\;(x)=\int_H\;f(xh)\;\Delta_{G,H}(h^{-1})\;%
a(h^{-1})\;dh\]
from the right where $\Delta_{G,H}(h)=\Delta_H(h)%
\Delta_G(h)^{-1}$. These actions extend to
actions of $\CH$ on $\CG$ such that
$(a\,\ast\,f)^{\ast}=f^{\ast}\,\ast\,a^{\ast}$ and
$f\,\ast\,(a\,\ast\,g)=(f\,\ast\,a)\,\ast\,g$
for all $f,g\in\CG$ and $a\in\CH$.\\\\

\begin{defn}\label{eLg_def:induced_ideals}
Let $H$ be a closed subgroup of the locally compact
group $G$. If $J$ is an ideal of $\CH$, then
\[\ind_H^G(J)=(\,\CG\,\ast\,J\ast\CG\,)\clos\]
denotes the induced ideal of $\CG$. If $I$ is an
ideal of $\CG$, then the ideal
\[\res_H^G(I)=\{a\in\CH:a\ast\CG\subset I\}\]
is its restriction to $H$. An ideal $I$ of $\CG$
is said to be induced from $H$ if there exists
an ideal $J$ of $\CH$ such that $I=\ind_H^G(J)$.
\end{defn}

If $I=\ker_{\CG}\pi$ for some unitary representation
$\pi$ of $G$, then $\res_H^G(I)=\ker_{\CH}\pi$. If
$I$ is induced from $H$, then $I=\ind_H^G(\res_H^G(I))$.
Note that $I=\ind_H^G(J)$ is minimal among all ideals
of $\CG$ whose restriction contains $J$.\\

It is interesting to compare our definition of induced
ideals to that of Green and Rieffel in Section~3
of~\cite{Green1} involving $\Cst$-imprimitivity
bimodules. To this end we assume that there exists a
$G$-invariant measure on the homogeneous space $G/H$
so that the character $\Delta_{G,H}$ of $H$ is trivial.
This is the case e.g.\ if $H$ is a normal subgroup of $G$.
We follow the considerations of Section~4 of Rieffel's
article~\cite{Rief1}. Note that the right action of
$\Cc{H}$ on $X_0=\Cc{G}$ defined in \cite{Rief1} coincides
with that of convolution from the right because the
function $\gamma=\Delta_{G,H}^{-1/2}$ used there is also
trivial. The $\Cc{H}$-valued inner product
\[\langle\,f\,|\,g\,\rangle_{\Cc{H}}\;(h)\;=\;
(f^{\ast}\ast g)(h)\;=\;
\int_G\;\overline{f(y)}\,g(yh)\,dy\]
defines a norm $|\,f\,|_{\CH}=|\langle\,f\,|\,f\,%
\rangle_{\Cc{H}}|^{1/2}$ on $X_0$ where the norm on the
right is the $\Cst$-norm of $\CH$.  Further $\Cc{G}$
acts on $X_0$ by convolution from the left so that $X_0$
becomes a $\Cc{G}\,$-$\,\Cc{H}$-bimodule, and
$\langle\,f\,|\,g\,\rangle_{\Cc{G}}=f\ast g^{\ast}$
defines a $\Cc{G}$-valued inner product
$_{\CG}\langle\;\mdot\;|\;\mdot\;\rangle$ on $X_0$.
Completion of $X_0$ with respect to the norm
$|\;\mdot\;|_{\CH}$ gives a right-$\CH$-rigged space
$X$ on which $\CG$ acts from the left. From
$$\ind_H^G(J)=\overline{\lspan}\,\{\,_{\CG}\langle\,%
f\ast a\,|\,g\,\rangle\,:f,g\in X\text{ and }%
a\in J\,\}=X-\ind_H^G(J)$$
we learn that, at least in the case of $\Delta_{G,H}$ being
trivial, our definition coincides with that of Rieffel
and Green.\\\\
It is well-known that for $\Cst$-\textbf{imprimitivity}
bimodules $X$ the Rieffel correspondence $X-\ind_H^G$ is
compatible with inducing representations in
the sense that
\begin{equation}\label{eLg_equ:induced_ideal}
X-\ind_H^G(\ker\sigma)=\ker(X-\ind_H^G\,\sigma),
\end{equation}
compare Chapter~3.3 of~\cite{RaeWil}. But in general the
bimodule $X$ from above is \textbf{not} a
$\CG\,$-$\,\CH$-imprimitivity bimodule because the
crucial equality $_{\CG}\langle\,f\,|\,g\,\rangle\ast h=
f\ast\langle\,g\,|\,h\,\rangle_{\CH}$ is not necessarily
satisfied. The norms $|\;\mdot\;|_{\CH}$ and
$|\;\mdot\;|_{\CG}$ might be different. In fact, the
imprimitivity algebra of the $\CH$-rigged space $X$
is known to be isomorphic to the covariance algebra
$\Cst(G,\mcC_{\infty}(G/H))$. As we will see,
Equation~\ref{eLg_equ:induced_ideal} holds true for
the $\Cst$-bimodule $X$ defined above if $G/H$ is
amenable.\\\\
In analogy to results of Leptin~\cite{Lep2}
and Hauenschild, Ludwig~\cite{HauLud} for the
$L^1$-case, we will characterize those ideals $I$
of $\CG$ which are induced from a given closed normal
subgroup $H$ of $G$. This turns out to be possible
if $H$ is normal and $G/H$ amenable. In order to prepare
the proof of Theorem~\ref{eLg_thm:induced_ideals} we recall
the well-known restriction-induction-lemma of Fell,
see Theorem~3.1 and Lemma~4.2 of~\cite{Fell1}.
A proof can also be found on~p.~32 of~\cite{LepLud}.
We presume the definition of induced
representations.

\begin{lem}\label{eLg_lem:restriction_induction}
Let $H$ be a closed subgroup of a locally compact
group $G$. Let $\pi$ be a unitary representation
of $G$ and $\pi\,|\,H$ its restriction to $H$.
\begin{enumerateroman}
\item If $\tau$ is a unitary representation
of $H$, then the Kronecker product
$\ind_H^G(\,(\pi\,|\,H)\otimes\tau)$
is unitarily equivalent to
$\pi\otimes\ind_H^G\tau$.
\item In particular $\ind_H^G(\pi\,|\,H)$ is
unitarily equivalent to $\pi\otimes\lambda$
where $\lambda$ denotes the left regular
representation of $G$ in $L^2(G/H)$.
\end{enumerateroman}
\end{lem}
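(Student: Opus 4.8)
The plan is to exhibit an explicit unitary operator intertwining the two representations in part \textit{(i)}; part \textit{(ii)} will then follow at once by specialisation. I would realise $\ind_H^G((\pi\,|\,H)\otimes\tau)$ on the Hilbert space of (classes of) square-integrable functions $\xi\colon G\to\mathcal{H}_\pi\otimes\mathcal{H}_\tau$ obeying the covariance relation $\xi(xh)=\Delta_{G,H}(h)^{-1/2}(\pi(h)^{-1}\otimes\tau(h)^{-1})\xi(x)$, with $G$ acting by left translation $\xi\mapsto\xi(g^{-1}\,\cdot\,)$. Dually, I would realise $\pi\otimes\ind_H^G\tau$ on the functions $\zeta\colon G\to\mathcal{H}_\pi\otimes\mathcal{H}_\tau$ carrying the \emph{partial} covariance $\zeta(xh)=\Delta_{G,H}(h)^{-1/2}(\Id\otimes\tau(h)^{-1})\zeta(x)$, on which $\pi(g)\otimes\ind_H^G\tau(g)$ acts by $\zeta\mapsto(\pi(g)\otimes\Id)\,\zeta(g^{-1}\,\cdot\,)$. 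The candidate intertwiner is the pointwise ``untwisting''
\[
(U\xi)(x)=(\pi(x)\otimes\Id)\,\xi(x).
\]

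First I would check that $U$ is well defined, i.e.\ that $U\xi$ satisfies the partial covariance: substituting $xh$ and using $\pi(xh)=\pi(x)\pi(h)$ together with the covariance of $\xi$, the two factors $\pi(h)$ and $\pi(h)^{-1}$ cancel and leave exactly $\Delta_{G,H}(h)^{-1/2}(\Id\otimes\tau(h)^{-1})(U\xi)(x)$. Since $\pi(x)$ is unitary, $\lvert(U\xi)(x)\rvert=\lvert\xi(x)\rvert$ pointwise, so the integrands over $G/H$ coincide and $U$ is a surjective isometry with inverse $\zeta\mapsto(\pi(x)^{-1}\otimes\Id)\zeta(x)$. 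Finally the intertwining identity is a short computation: both $U\big(\ind_H^G((\pi\,|\,H)\otimes\tau)(g)\,\xi\big)(x)$ and $\big((\pi\otimes\ind_H^G\tau)(g)\,U\xi\big)(x)$ reduce to $(\pi(x)\otimes\Id)\,\xi(g^{-1}x)$, the telescoping $\pi(g)\pi(g^{-1}x)=\pi(x)$ being the only thing used.

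For part \textit{(ii)} I would take $\tau=1_H$ to be the trivial one-dimensional representation of $H$. Then $(\pi\,|\,H)\otimes 1_H=\pi\,|\,H$, while $\ind_H^G 1_H$ is by definition the quasi-regular representation $\lambda$ of $G$ on $L^2(G/H)$; part \textit{(i)} therefore gives $\ind_H^G(\pi\,|\,H)\cong\pi\otimes\lambda$ immediately.

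The routine algebra above hides the one genuine point of care: the passage from formal functions on $G$ to the Hilbert-space completions. The spaces above are defined through a quasi-invariant measure on $G/H$ and an associated $\rho$-function (equivalently the factor $\Delta_{G,H}^{-1/2}$), and I would need to confirm that $U$ preserves measurability and the $L^2(G/H)$-inner product for the \emph{same} choice of these data on both sides, so that it extends from a dense space of continuous compactly supported sections to a unitary of the completions. Fixing one $\rho$-function once and for all makes the norms literally equal pointwise, which is why the isometry statement is clean; the only real obstacle is bookkeeping the modular factors consistently, and this is exactly the content of the cited references (Theorem~3.1 and Lemma~4.2 of~\cite{Fell1}, or p.~32 of~\cite{LepLud}).
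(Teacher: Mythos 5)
Your proof is correct, but note that the paper itself offers no proof of this lemma at all: it is recalled as the ``well-known restriction--induction lemma of Fell,'' with the argument delegated to Theorem~3.1 and Lemma~4.2 of Fell's paper and to p.~32 of Leptin--Ludwig. What you have written out is essentially that classical argument: realise both representations on spaces of covariant vector-valued functions on $G$ and observe that the pointwise untwisting $(U\xi)(x)=(\pi(x)\otimes\Id)\xi(x)$ converts the full covariance $(\pi(h)^{-1}\otimes\tau(h)^{-1})$ into the partial covariance $(\Id\otimes\tau(h)^{-1})$, is a pointwise isometry, and intertwines the two left-translation actions via the telescoping $\pi(g)\pi(g^{-1}x)=\pi(x)$; part \textit{(ii)} is then the specialisation $\tau=1_H$, $\ind_H^G 1_H=\lambda$. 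Your computations check out, including the fact that any Radon--Nikodym or $\rho$-function factors attached to a merely quasi-invariant measure on $G/H$ are the same scalars on both sides and hence commute with $U$ --- you correctly identify this modular bookkeeping as the only delicate point, and it is handled by fixing one quasi-invariant measure and one $\rho$-function for both realisations. The one step worth stating explicitly rather than implicitly is the identification of $\frakH_\pi\otimes L^2_\tau(G,\frakH_\tau)$ with the space of partially covariant functions $\zeta:G\to\frakH_\pi\otimes\frakH_\tau$, under which $\pi(g)\otimes\ind_H^G\tau(g)$ becomes $\zeta\mapsto(\pi(g)\otimes\Id)\zeta(g^{-1}\,\cdot\,)$; this is the standard isomorphism of a Hilbert tensor product with a vector-valued $L^2$-space and poses no difficulty. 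In short: the paper buys brevity by citation, while your version makes the lemma self-contained, and it does so correctly.
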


Note that conjugation
$f^z(x)=\Delta_G(z^{-1})\,f(zxz^{-1})$ for
$f\in\LG$ and $z\in G$ extends to a strongly
continuous action of $G$ on $\CG$ by isometric
automorphisms. Using an approximate identity
of $\CG$, one can prove that every closed ideal $I$
of $\CG$ is two-sided translation-invariant, and
hence invariant under conjugation, i.e.~$I^z=I$.\\\\
If $H$ is a closed normal subgroup of $G$, then
$G$ acts on $H$ by conjugation $n^z=z^{-1}nz$.
Further $a^z(n)=\delta(z^{-1})\,a(n^{z^{-1}})$
for $a\in\LH$ and $z\in G$ yields a strongly
continuous, isometric action of $G$ on $\CH$.
If $I$ is a closed ideal of $\CG$, then
$J=\res_H^G(I)$ is a $G$-invariant ideal
of $\CH$, i.e.~$J^z=J$, because
$(a\ast f)^z=a^z\ast f^z$ and $I^z=I$.

\begin{lem}
\label{eLg_lem:restriction_to_normal_subgroups}
Let $H$ be a closed normal subgroup of a locally
compact group $G$. Let $\sigma$ be a unitary
representation of $H$ and $\pi=\ind_H^G\sigma$.
Then $\pi\,|\,H$ is weakly equivalent to
the orbit $G\mdot\sigma$ which means
\begin{equation*}
\ker_{\CH}\,\pi=k(G\mdot \sigma)=\bigcap\limits_{x\in G}\,%
\ker_{\CH}\,x\mdot\sigma\;.
\end{equation*}
\end{lem}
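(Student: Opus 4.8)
The plan is to work in a concrete model of the induced representation and to read off the weak equivalence from Fell's criterion for weak containment (a representation $\rho$ is weakly contained in a family $S$ precisely when every positive-definite function associated with $\rho$ is, uniformly on compact sets, a limit of finite sums of positive-definite functions associated with members of $S$). Realize $\pi=\ind_H^G\sigma$ on the space of $\mathcal H_\sigma$-valued functions $\xi$ on $G$ with the covariance $\xi(gh)=\sigma(h)^{-1}\xi(g)$ that are square-integrable over $G/H$ against a quasi-invariant measure $\mu$, the left action being $(\pi(y)\xi)(g)=\xi(y^{-1}g)$. Since $H$ is normal, for $n\in H$ one writes $n^{-1}g=g\,(g^{-1}n^{-1}g)$ with $g^{-1}n^{-1}g\in H$ and computes $(\pi(n)\xi)(g)=\sigma(g^{-1}ng)\,\xi(g)$, so on the fibre over $g$ the restriction $\pi\,|\,H$ acts by the conjugate $(g\mdot\sigma)(n)=\sigma(g^{-1}ng)$. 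Hence every matrix coefficient of $\pi\,|\,H$ has the shape
$$n\longmapsto\langle\pi(n)\xi,\xi\rangle=\int_{G/H}\langle(g\mdot\sigma)(n)\,\xi(g),\xi(g)\rangle\,d\mu(\dot g),$$
an integral over the orbit of positive-definite functions of the conjugates $g\mdot\sigma$.

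For the inclusion $k(G\mdot\sigma)\subset\ker_{\CH}\pi$ I would prove $\pi\,|\,H\prec G\mdot\sigma$. It suffices to treat coefficients with $\xi$ continuous and compactly supported modulo $H$; for these the displayed integral is a uniform-on-compacta limit of its Riemann sums $\sum_i\mu(E_i)\,\langle(g_i\mdot\sigma)(n)\,\xi(g_i),\xi(g_i)\rangle$, each summand being a positive-definite function of some $g_i\mdot\sigma\in G\mdot\sigma$. Thus each matrix coefficient of $\pi\,|\,H$ is a uniform-on-compacta limit of finite sums of positive-definite functions of the orbit, and Fell's criterion gives $\pi\,|\,H\prec G\mdot\sigma$, i.e. $\ker_{\CH}\pi\supset k(G\mdot\sigma)$.

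For the reverse inclusion I would show $x\mdot\sigma\prec\pi\,|\,H$ for every $x\in G$. Fix $x$ and a unit vector $v\in\mathcal H_\sigma$; the target coefficient is $n\mapsto\langle\sigma(x^{-1}nx)v,v\rangle=\langle(x\mdot\sigma)(n)v,v\rangle$. Choosing $\xi$ supported in a small neighbourhood of the coset $xH$ and suitably normalized, the integral above concentrates near $g=x$, and by joint continuity of $(g,n)\mapsto\langle(g\mdot\sigma)(n)v,v\rangle$ together with the fact that every neighbourhood of $xH$ carries positive $\mu$-measure, it approximates $n\mapsto\langle(x\mdot\sigma)(n)v,v\rangle$ uniformly on compact subsets of $H$. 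So each such coefficient of $x\mdot\sigma$ is approximated by coefficients of $\pi\,|\,H$, whence $x\mdot\sigma\prec\pi\,|\,H$ and therefore $\ker_{\CH}\pi\subset\ker_{\CH}(x\mdot\sigma)$ for all $x$, that is $\ker_{\CH}\pi\subset k(G\mdot\sigma)$. Combining the two inclusions proves the asserted equality.

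The routine points (the modular factor in $\mu$, the precise covariance conventions, and the normalization of the concentrating sections $\xi$) I would suppress. The genuinely delicate step is the reverse inclusion: one must upgrade the merely fibrewise (almost-everywhere) nature of the decomposition of $\pi\,|\,H$ to a statement valid for \emph{every} single $x\in G$, and this is exactly where the full support of the quasi-invariant measure on $G/H$ and the continuity of the orbit map $g\mapsto g\mdot\sigma$ become indispensable. Equivalently one may run the whole argument through the direct-integral decomposition $\pi\,|\,H\cong\int^{\oplus}_{G/H}(g\mdot\sigma)\,d\mu(\dot g)$ and invoke lower semicontinuity of $g\mapsto\|(g\mdot\sigma)(a)\|$ for $a\in\CH$; the substance of the obstacle is the same.
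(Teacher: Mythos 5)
Your proof is correct and follows essentially the same route as the paper: both realize $\pi=\ind_H^G\sigma$ on covariant $\frakH_\sigma$-valued functions on $G$, compute that $\pi\,|\,H$ acts fibrewise by the conjugates $g\cdot\sigma$, and deduce the weak equivalence from this pointwise decomposition over $G/H$. The only difference is one of completeness: where the paper ends with ``the assertion of this lemma becomes clear,'' you actually carry out the two weak containments (Riemann sums plus Fell's criterion in one direction, concentrating sections with full support of the quotient Haar measure in the other), which is precisely the detail the paper suppresses.
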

\begin{proof}
Let $\frakH$ be the representation space of $\sigma$.
As usual $\mcC_0^\sigma(G,\frakH)$ denotes the vector
space of all continuous functions on $G$ which satisfy
$\varphi(xh)=\sigma(h)^\ast\,\varphi(x)$ for $h\in H$,
$x\in G$ and have compact support modulo $H$. Then
$\pi=\ind_H^G\sigma$ is defined in $L^2_\sigma(G,\frakH)$,
the completion of $\mcC_0^\sigma(G,\frakH)$ with respect
to the $L^2$-norm given by integration with respect to
the Haar measure of the group $G/H$. We get
\[\pi(h)\varphi\;(x)=\varphi(h^{-1}x)=
\sigma(h^x)\mdot\varphi(x)\]
for $h\in H$. It follows that $\pi\,|\,H$ is given by
$\pi(a)\varphi\;(x)=\sigma(a^x)\mdot\varphi(x)$ for
$a\in\CH$. Hence $\pi$ is essentially a direct integral
of the representations $\{x\mdot\sigma:x\in G\}$ so
that the assertion of this lemma becomes clear.
\end{proof}

The importance of the left regular representation
$\lambda$ of~$G$ in $L^2(G/H)$ has already been
indicated by Lemma~\ref{eLg_lem:restriction_induction}.

\begin{defn}\label{eLg_defn:invariant_ideals}
Let $H$ be a closed normal subgroup of a locally
compact group $G$. An ideal $I$ of $\CG$ is said
to be $(G/H)\widehat{\;\;}$-invariant if $\pi$ is
weakly equivalent to $\pi\otimes\lambda$ (in
symbols $\pi\approx\pi\otimes\lambda$) for all
unitary representations $\pi$ of $G$ such that
$I=\ker_{\CG}\pi$.
\end{defn}

Theorem~1 of~\cite{Fell2} shows that
$\pi\approx\pi\otimes\lambda$ for at least one
such $\pi$ is sufficient for $I$ to be
$(G/H)\widehat{\;\;}$-invariant. Now we can
state the announced characterization of
induced ideals.

\begin{thm}\label{eLg_thm:induced_ideals}
Let $H$ be a closed normal subgroup of a locally
compact group $G$ such that $G/H$ is amenable. Then
there are equivalent:
\begin{enumerateroman}
\item $I=\ind_H^G(\res_H^G(I))$ is induced from $H$.
\item $I=\ker_{\CG}\pi$ is the kernel of some induced
representation $\pi=\ind_H^G\sigma$.
\item $I$ is $(G/H)\widehat{\;\;}$-invariant.
\end{enumerateroman}
\end{thm}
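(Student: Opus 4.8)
The plan is to route everything through weak containment and to reduce the whole theorem to one formal adjunction together with a single application of amenability. The tool I would isolate first is the following identity, valid for any (nondegenerate) representation $\rho$ of $G$ and any ideal $J$ of $\CH$:
\[\ind_H^G(J)\subset\ker_{\CG}\rho\quad\Longleftrightarrow\quad J\subset\res_H^G(\ker_{\CG}\rho)=\ker_{\CH}(\rho\,|\,H).\]
This is purely formal: since $\ind_H^G(J)=(\CG\ast J\ast\CG)\clos$ and $\rho(a\ast c)=(\rho\,|\,H)(a)\,\rho(c)$ for $a\in\CH$ and $c\in\CG$, nondegeneracy of $\rho$ makes $\rho(\CG\ast J\ast\CG)=0$ equivalent to $(\rho\,|\,H)(J)=0$. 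Read through hulls, it says that a primitive $\rho$ lies in the hull of $\ind_H^G(J)$ precisely when $\rho\,|\,H$ is weakly contained in $J$.

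First I would settle (ii) $\Leftrightarrow$ (iii), which uses no amenability. If $\pi=\ind_H^G\sigma$, then Lemma~\ref{eLg_lem:restriction_to_normal_subgroups} gives $\ker_{\CH}(\pi\,|\,H)=k(G\mdot\sigma)$, so $\pi\,|\,H\approx\bigoplus_{x}x\mdot\sigma$; by continuity of induction with respect to weak containment and the equivalence $\ind_H^G(x\mdot\sigma)\cong\ind_H^G\sigma$ for normal $H$, one obtains $\ind_H^G(\pi\,|\,H)\approx\pi$. Combined with Fell's Lemma~\ref{eLg_lem:restriction_induction}(ii) this reads $\pi\otimes\lambda\cong\ind_H^G(\pi\,|\,H)\approx\pi$, whence (iii) follows by Theorem~1 of~\cite{Fell2}. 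Conversely, (iii) yields $I=\ker_{\CG}\pi=\ker_{\CG}(\pi\otimes\lambda)=\ker_{\CG}\ind_H^G(\pi\,|\,H)$, exhibiting $I$ as the kernel of an induced representation, which is (ii).

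Amenability enters only in linking (i) to this pair, and exactly through the weak containment $\rho\prec\rho\otimes\lambda$ for every $\rho$, which holds because $1_{G/H}\prec\lambda_{G/H}$. For (i) $\Rightarrow$ (iii): writing $I=\ind_H^G(J)$ with $J=\res_H^G(I)$ $G$-invariant and taking any $\pi$ with $\ker_{\CG}\pi=I$, amenability gives $\pi\prec\pi\otimes\lambda$; for the reverse, apply the adjunction to the representation $\ind_H^G(\pi\,|\,H)$ and the ideal $J$, noting $\ker_{\CH}\bigl(\ind_H^G(\pi\,|\,H)\,|\,H\bigr)=k\bigl(G\mdot(\pi\,|\,H)\bigr)=J$ by Lemma~\ref{eLg_lem:restriction_to_normal_subgroups} and $G$-invariance of $J$, to get $I=\ind_H^G(J)\subset\ker_{\CG}(\pi\otimes\lambda)$, i.e.\ $\pi\otimes\lambda\prec\pi$. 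Thus $\pi\approx\pi\otimes\lambda$ and (iii) holds. For (iii) $\Rightarrow$ (i): the inclusion $\ind_H^G(\res_H^G(I))\subset I$ is automatic, and for the reverse I would take any primitive $\rho$ in the hull of $\ind_H^G(\res_H^G(I))$; the adjunction gives $\ker_{\CH}(\pi\,|\,H)=\res_H^G(I)\subset\ker_{\CH}(\rho\,|\,H)$, hence $\rho\,|\,H\prec\pi\,|\,H$, so by continuity of induction, amenability, and (iii),
\[\rho\prec\ind_H^G(\rho\,|\,H)\prec\ind_H^G(\pi\,|\,H)\cong\pi\otimes\lambda\approx\pi,\]
whence $\rho\supset I$; as $\rho$ ranges over the whole hull, $I\subset\ind_H^G(\res_H^G(I))$.

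I expect the main obstacle to be conceptual rather than computational: correctly formulating the adjunction and, above all, locating amenability at the single place where it is truly indispensable, the containment $\rho\prec\rho\otimes\lambda$. This is the $\Cst$-shadow of the gap between the full and reduced constructions, and it is the same phenomenon behind the remark that Equation~\ref{eLg_equ:induced_ideal} holds for the non-imprimitivity bimodule $X$ only when $G/H$ is amenable. A heavier alternative would proceed through Green's imprimitivity theorem for $\Cst(G,\mcC_\infty(G/H))$ and the Rieffel correspondence, invoking amenability to force the two $\Cst$-norms $|\,\mdot\,|_{\CH}$ and $|\,\mdot\,|_{\CG}$ to agree on kernels; but I would prefer the weak-containment argument above, since it stays inside the group algebras and calls on amenability exactly once.
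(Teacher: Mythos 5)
Your proof is correct, and it runs on the same machinery as the paper's: Fell's restriction--induction lemma (Lemma~\ref{eLg_lem:restriction_induction}), Lemma~\ref{eLg_lem:restriction_to_normal_subgroups}, continuity of induction with respect to weak containment, amenability entering as $1_G\prec\lambda$ plus Fell's tensor theorem, and $I=k(h(I))$; the adjunction you isolate up front is exactly the content of the remarks following Definition~\ref{eLg_def:induced_ideals} (that $\res_H^G(\ker_{\CG}\rho)=\ker_{\CH}(\rho\,|\,H)$ and that $\ind_H^G(J)$ is minimal among ideals whose restriction contains $J$), which the paper uses only implicitly. The genuine difference is the organization of the implications and where amenability sits. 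The paper proves the cycle \textit{(i)}~$\Rightarrow$~\textit{(ii)}~$\Rightarrow$~\textit{(iii)}~$\Rightarrow$~\textit{(i)}, and its step \textit{(i)}~$\Rightarrow$~\textit{(ii)} constructs the hull representation $\sigma=\sum^\oplus_{\tau\in h(J)}\tau$, $\pi=\ind_H^G\sigma$, and then needs an amenability-driven weak-containment chase to show $\ker_{\CG}\pi$ is no larger than $I$. You instead prove \textit{(ii)}~$\Leftrightarrow$~\textit{(iii)} amenability-free and link \textit{(i)} to \textit{(iii)} directly: for induced $I$ the containment $\pi\otimes\lambda\prec\pi$ comes formally from the adjunction applied to $\ind_H^G(\pi\,|\,H)$ together with $k(G\mdot(\pi\,|\,H))=J$ (valid by $G$-invariance of $J$), so amenability is invoked exactly once, for $\pi\prec\pi\otimes\lambda$. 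This buys a cleaner division of labor --- one sees precisely which parts of the theorem survive without amenability --- and it avoids constructing $\sigma$ altogether. What the paper's construction buys in exchange is the explicit byproduct recorded right after the theorem and needed for Theorem~\ref{eLg_thm:induction_and_restriction_are_bijections}, namely $J=\res_H^G(\ind_H^G(J))$ for every $G$-invariant ideal $J$; your adjunction yields this too, but your write-up does not record it. Your \textit{(iii)}~$\Rightarrow$~\textit{(i)} coincides with the paper's argument essentially word for word.
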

\begin{proof}
First we verify \textit{(i)}~$\Rightarrow$~\textit{(ii)}.
Suppose that $I$ is induced from $H$. Since $J=\res_H^G(I)$
is a $G$-invariant ideal of $\CH$, its hull
$\Omega=h(J)\subset\widehat{H}$ is $G$-invariant, too.
Define $\sigma=\sum_{\tau\in\Omega}^\oplus\tau$ and
$\pi=\ind_H^G\sigma$. Lemma~\ref{eLg_lem:%
restriction_to_normal_subgroups} implies
$\ker_{\CH}\pi=k(G\mdot\sigma)=k(\Omega)=J$. Hence
$I=\ind_H^G(J)\subset\ker_{\CG}\pi$. We must prove
the opposite inclusion: Let $\rho\in\widehat{G}$
be arbitrary such that $I\subset\ker_{\CG}\rho$. Then
$k(G\mdot\sigma)=J\subset\ker_{\CH}\rho$ which means
that $\rho\,|\,H$ is weakly contained in $G\mdot\sigma$
(in symbols $\rho\,|\,H\ll G\mdot\sigma$). Since $G/H$ is
amenable, we have $1_G\ll\lambda=\ind_H^G1_H$ and hence
$\rho\otimes 1_G\ll\rho\otimes\lambda$ by Theorem~1
of~\cite{Fell2}. For inducing representations is
continuous \wrt\ the Fell topologies of $\widehat{H}$
and $\widehat{G}$, it follows from part~\textit{(ii)} of
Lemma~\ref{eLg_lem:restriction_induction} that
\[\rho\cong\rho\otimes 1_G\ll\rho\otimes\lambda\cong%
\ind_H^G(\rho\,|\,H)\ll\ind_H^G(G\mdot\sigma)\approx%
\ind_H^G\sigma=\pi\]
because the representations $\ind_H^G(z\mdot\sigma)$,
$z\in G$, are all unitarily equivalent. Thus
$\ker_{\CG}\pi\subset\ker_{\CG}\rho$. Since $I$ is
the intersection of all primitive ideals of $\CG$
containing $I$ by Theorem~2.9.7 of~\cite{Dix3}, we
obtain $I=\ker_{\CG}\pi$.\\\\
Next we show \textit{(ii)}$~\Rightarrow$~\textit{(iii)}.
Suppose that $I=\ker_{\CG}\pi$ for some
$\pi=\ind_H^G\sigma$. By Lemma~\ref{eLg_lem:%
restriction_to_normal_subgroups} we know
$\pi\,|\,H\approx G\mdot\sigma$. Thus
$\pi\otimes\lambda\cong\ind_H^G(\pi\,|\,H)
\approx\ind_H^G\sigma=\pi$ which proves $I$ to be
$(G/H)\widehat{\;\;}$-invariant.\\\\
Finally we prove \textit{(iii)}~$\Rightarrow$~\textit{(i)}.
Suppose that $I$ is $(G/H)\widehat{\;\;}$-invariant.
Set $J=\res_H^G(I)$. Clearly $\ind_H^G(J)\subset I$.
It remains to verify the opposite inclusion: Choose
a unitary representation $\pi$ of $G$ such that
$I=\ker_{\CG}\pi$. Let $\rho\in\widehat{G}$ be
arbitrary such that $\ind_H^G(J)\subset\ker_{\CG}\rho$.
Then $\ker_{\CH}\pi=J\subset\ker_{\CH}\rho$ and hence
$\rho\,|\,H\ll\pi\,|\,H$. Since $G/H$ is amenable,
we get
\[\rho=\rho\otimes 1\ll\rho\otimes\lambda
=\ind_H^G(\rho\,|\,H)\ll\ind_H^G(\pi\,|\,H)
=\pi\otimes\lambda\approx\pi\]
because $I$ is $(G/H)\widehat{\;\;}$-invariant. Thus
$I=\ker_{\CG}\pi\subset\ker_{\CG}\rho$. Now
Theorem~2.9.7 of~\cite{Dix3} implies $I=\ind_H^G(J)$.
The proof is complete.
\end{proof}

The proof of \textit{(i)}~$\Rightarrow$\textit{(ii)}
of Theorem~\ref{eLg_thm:induced_ideals} shows that
$J=\res_H^G(\ind_H^G(J))$ for every $G$-invariant
ideal $J$ of $\CH$. The preceding results
can be summarized as follows:

\begin{thm}\label{eLg_thm:induction_and_%
restriction_are_bijections}
Let $H$ be a closed normal subgroup of a locally
compact group $G$ such that $G/H$ is amenable.
Induction and restriction give bijections
between the set of all $(G/H)\widehat{\;\;}$-invariant
ideals $I$ of $\CG$ and the set of all $G$-invariant
ideals $J$ of $\CH$ which are inverses of one
another.
\end{thm}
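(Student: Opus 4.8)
The plan is to show that induction $\ind_H^G$ and restriction $\res_H^G$, viewed as maps between the two sets of invariant ideals, are mutually inverse bijections. I would first verify that these two maps actually land in the correct sets, and then establish the two composition identities $\res_H^G\circ\ind_H^G=\mathrm{id}$ and $\ind_H^G\circ\res_H^G=\mathrm{id}$, each restricted to the appropriate domain. Almost all of the needed work has already been done in Theorem~\ref{eLg_thm:induced_ideals} and the remark immediately following it; the task here is mainly to assemble those pieces into the statement about bijections.

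\emph{Well-definedness of the maps.} First I would check that restriction carries $(G/H)\widehat{\;\;}$-invariant ideals $I$ of $\CG$ to $G$-invariant ideals $J=\res_H^G(I)$ of $\CH$. The $G$-invariance of $\res_H^G(I)$ is exactly the fact established in the paragraph preceding Lemma~\ref{eLg_lem:restriction_to_normal_subgroups}, namely that $J^z=J$ follows from $(a\ast f)^z=a^z\ast f^z$ together with $I^z=I$; note that every closed ideal $I$ is conjugation-invariant, so this applies to all $I$ in our domain. In the other direction I would check that induction carries a $G$-invariant ideal $J$ of $\CH$ to a $(G/H)\widehat{\;\;}$-invariant ideal $I=\ind_H^G(J)$ of $\CG$. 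For this I set $\Omega=h(J)$, which is $G$-invariant, put $\sigma=\sum_{\tau\in\Omega}^\oplus\tau$ and $\pi=\ind_H^G\sigma$; by the argument in the proof of \textit{(i)}$\Rightarrow$\textit{(ii)} we have $\ker_{\CH}\pi=k(\Omega)=J$ and $I=\ind_H^G(J)=\ker_{\CG}\pi$, so $I$ is the kernel of an induced representation and therefore $(G/H)\widehat{\;\;}$-invariant by the implication \textit{(ii)}$\Rightarrow$\textit{(iii)} of Theorem~\ref{eLg_thm:induced_ideals}.

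\emph{The two inverse identities.} The identity $\ind_H^G\circ\res_H^G=\mathrm{id}$ on $(G/H)\widehat{\;\;}$-invariant ideals is precisely the content of the implication \textit{(iii)}$\Rightarrow$\textit{(i)} of Theorem~\ref{eLg_thm:induced_ideals}: if $I$ is $(G/H)\widehat{\;\;}$-invariant, then $I=\ind_H^G(\res_H^G(I))$. The opposite identity $\res_H^G\circ\ind_H^G=\mathrm{id}$ on $G$-invariant ideals is exactly the statement made in the remark following the theorem, namely that $J=\res_H^G(\ind_H^G(J))$ for every $G$-invariant ideal $J$ of $\CH$, which was read off from the proof of \textit{(i)}$\Rightarrow$\textit{(ii)}. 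Together these two identities show that the two maps are inverse to one another, hence both are bijections between the indicated sets.

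I do not expect any genuine obstacle here, since this theorem is explicitly a summary of the preceding results; the only point requiring a little care is bookkeeping about which invariance condition lives on which side, and confirming that no ideal in either domain is mapped outside the target class. The one substantive ingredient that is not entirely formal is the surjectivity of restriction onto $G$-invariant ideals, i.e.\ realizing a given $G$-invariant $J$ as $\ker_{\CH}\pi$ for a suitable induced $\pi$; but this is handled cleanly by the explicit choice $\sigma=\sum_{\tau\in h(J)}^\oplus\tau$ together with Lemma~\ref{eLg_lem:restriction_to_normal_subgroups}, which identifies $\ker_{\CH}\ind_H^G\sigma$ with $k(G\mdot\sigma)=k(h(J))=J$.
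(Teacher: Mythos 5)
Your proposal is correct and follows exactly the route the paper intends: the paper states this theorem as a summary, with the identity $\ind_H^G\circ\res_H^G=\mathrm{id}$ given by \textit{(iii)}$\Rightarrow$\textit{(i)} of Theorem~\ref{eLg_thm:induced_ideals} and $\res_H^G\circ\ind_H^G=\mathrm{id}$ given by the remark following that theorem, which is precisely how you assemble it. Your well-definedness checks (conjugation-invariance of $\res_H^G(I)$, and realizing $\ind_H^G(J)$ as $\ker_{\CG}\ind_H^G\sigma$ with $\sigma=\sum_{\tau\in h(J)}^\oplus\tau$ so that \textit{(ii)}$\Rightarrow$\textit{(iii)} applies) are the same ingredients the paper uses inside its proof of Theorem~\ref{eLg_thm:induced_ideals}.
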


An immediate consequence is

\begin{cor}
\label{eLg_cor:intersection_of_induced_ideals}
Let $H$ be a closed normal subgroup of a locally
compact group $G$ such that $G/H$ is amenable. If
the ideals $\{I_k:k\in\Lambda\}$ of $\CG$ are
induced from $H$, then their intersection
$I=\bigcap\,\{I_k:k\in\Lambda\}$ is also
induced from $H$.
\end{cor}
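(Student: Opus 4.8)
The plan is to reduce the statement to the representation-theoretic characterization of induced ideals provided by Theorem~\ref{eLg_thm:induced_ideals}, exploiting that induction of unitary representations is compatible with the formation of arbitrary orthogonal direct sums. Concretely, I would proceed as follows.

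Since each $I_k$ is induced from $H$, the implication \textit{(i)}~$\Rightarrow$~\textit{(ii)} of Theorem~\ref{eLg_thm:induced_ideals} furnishes, for every $k\in\Lambda$, a unitary representation $\sigma_k$ of $H$ on some Hilbert space $\frakH_k$ such that $I_k=\ker_{\CG}\pi_k$ with $\pi_k=\ind_H^G\sigma_k$. I would then form the Hilbert-space direct sums $\sigma=\bigoplus_{k\in\Lambda}\sigma_k$ of representations of $H$ and $\pi=\bigoplus_{k\in\Lambda}\pi_k$ of $G$. Because inducing a representation from $H$ to $G$ respects arbitrary orthogonal direct sums, there is a unitary equivalence $\ind_H^G\sigma\cong\bigoplus_{k\in\Lambda}\ind_H^G\sigma_k=\pi$, so that $\pi$ is itself an induced representation.

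It remains to identify the kernel of $\pi$. As the kernel of a direct sum of $\ast$-representations of $\CG$ is the intersection of the individual kernels, one obtains $\ker_{\CG}\pi=\bigcap_{k\in\Lambda}\ker_{\CG}\pi_k=\bigcap_{k\in\Lambda}I_k=I$. Thus $I=\ker_{\CG}\pi$ is the kernel of the induced representation $\pi=\ind_H^G\sigma$, i.e.\ condition \textit{(ii)} of Theorem~\ref{eLg_thm:induced_ideals} holds for $I$. The implication \textit{(ii)}~$\Rightarrow$~\textit{(i)} of that theorem then yields that $I$ is induced from $H$, which is exactly the assertion.

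The only point that genuinely requires care — and which I regard as the main, though entirely standard, obstacle — is the compatibility of induction with direct sums over an index set $\Lambda$ of arbitrary cardinality; one must check that the equivalence $\ind_H^G\bigl(\bigoplus_k\sigma_k\bigr)\cong\bigoplus_k\ind_H^G\sigma_k$ holds without any finiteness or countability hypothesis. This follows from the explicit realization of induced representations in the space $L^2_\sigma(G,\frakH)$ used in the proof of Lemma~\ref{eLg_lem:restriction_to_normal_subgroups}: the defining data (continuous functions transforming by $\sigma(h)^{\ast}$ and square-integrable modulo $H$) split as an orthogonal sum over $\Lambda$ precisely as the representation space $\frakH=\bigoplus_k\frakH_k$ does. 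Everything else is a formal consequence of Theorem~\ref{eLg_thm:induced_ideals} together with the elementary fact that kernels turn direct sums into intersections.
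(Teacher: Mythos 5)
Your proof is correct, but it routes through a different clause of Theorem~\ref{eLg_thm:induced_ideals} than the paper does. You use \textit{(i)}$\Leftrightarrow$\textit{(ii)}: realize each $I_k$ as the kernel of $\ind_H^G\sigma_k$, form the direct sum $\sigma=\bigoplus_k\sigma_k$ on the $H$-side, and invoke the compatibility of induction with arbitrary orthogonal direct sums --- the fact you correctly single out as the one nontrivial ingredient. The paper instead goes through clause \textit{(iii)}: it picks arbitrary representations $\pi_k$ of $G$ with $\ker_{\CG}\pi_k=I_k$, notes $\pi_k\otimes\lambda\approx\pi_k$ by \textit{(i)}$\Rightarrow$\textit{(iii)}, sets $\pi=\sum_{k\in\Lambda}^\oplus\pi_k$, and checks $\pi\otimes\lambda\approx\{\pi_k\otimes\lambda\}\approx\{\pi_k\}\approx\pi$, concluding by \textit{(iii)}$\Rightarrow$\textit{(i)}. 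The difference in cost is where the work sits: the paper's route needs only that tensoring with $\lambda$ distributes over direct sums and that the kernel of a direct sum is the intersection of the kernels --- both immediate --- so it never touches an explicit model of an induced representation; your route needs the unitary equivalence $\ind_H^G\bigl(\bigoplus_k\sigma_k\bigr)\cong\bigoplus_k\ind_H^G\sigma_k$ for index sets of arbitrary cardinality, which is standard but not entirely free, and which you rightly flag and justify via the realization in $L^2_\sigma(G,\frakH)$. What your route buys is concreteness: the inducing representation for $I$ is exhibited explicitly rather than $I$ being shown $(G/H)\widehat{\;\;}$-invariant. Both arguments are sound and both lean entirely on Theorem~\ref{eLg_thm:induced_ideals}, so the choice is a matter of which auxiliary fact one prefers to verify.
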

\begin{proof}
Let $\pi_k$ be a unitary representation of $G$
such that $I_k=\ker_{\CG}\pi_k$. We know
$\pi_k\otimes\lambda\approx\pi_k$ by
Theorem~\ref{eLg_thm:induced_ideals}. If we
define $\pi=\sum_{k\in\Lambda}^\oplus\pi_k$, then
$I=\ker_{\CG}\pi$ and $\pi\otimes\lambda\approx%
\{\pi_k\otimes\lambda:k\in\Lambda\}\approx%
\{\pi_k:k\in\Lambda\}\approx\pi$. Thus $I$ induced
from $H$ by Theorem~\ref{eLg_thm:induced_ideals}.
\end{proof}

Suppose that $H$ is a coabelian normal subgroup
of $G$ so that $G/H$ is amenable as an abelian
group. In this case $(\chi\mdot f)(x)=\chi(x)f(x)$
for $f\in\LG$ extends to an isometric, strongly
continuous action of the Pontryagin dual
$(G/H)\widehat{\;\;}$ on $\CG$. Note that
$\pi(\chi\mdot f)=(\pi\otimes\chi)(f)$ for
any unitary representation $\pi$ of $G$.

\begin{cor}\label{eLg_cor:ideals_induced_from_%
coabelian_subgroups}
Let $H$ be a coabelian normal subgroup of $G$. An ideal
$I$ of $\CG$ is induced from $H$ if and only if it is
$(G/H)\widehat{\;\;}$-invariant in the sense that
$\chi\mdot I=I$ for all $\chi\in(G/H)\widehat{\;\;}$.
\end{cor}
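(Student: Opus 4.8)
The plan is to deduce this corollary from Theorem~\ref{eLg_thm:induced_ideals} by identifying its abstract condition~\textit{(iii)}, namely $(G/H)\widehat{\;\;}$-invariance of $I$ (weak equivalence $\pi\approx\pi\otimes\lambda$ whenever $I=\ker_{\CG}\pi$), with the concrete condition $\chi\mdot I=I$ for all $\chi\in(G/H)\widehat{\;\;}$. Since $H$ is coabelian, $G/H$ is amenable, so Theorem~\ref{eLg_thm:induced_ideals} applies and tells us that $I$ is induced from $H$ if and only if it is $(G/H)\widehat{\;\;}$-invariant in the representation-theoretic sense. The whole task therefore reduces to showing that, in the coabelian case, these two notions of invariance coincide.

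First I would exploit the relation $\pi(\chi\mdot f)=(\pi\otimes\chi)(f)$ recorded just before the statement. Choosing $\pi$ with $I=\ker_{\CG}\pi$, this identity shows directly that $\ker_{\CG}(\pi\otimes\chi)=\chi^{-1}\mdot I$ (one has $f\in\ker(\pi\otimes\chi)$ iff $\chi\mdot f\in\ker\pi=I$). Hence the orbit $\{\pi\otimes\chi:\chi\in(G/H)\widehat{\;\;}\}$ corresponds exactly to the orbit $\{\chi\mdot I\}$ of ideals under the dual action. So the condition $\chi\mdot I=I$ for all $\chi$ says precisely that all the one-dimensional twists $\pi\otimes\chi$ share the kernel $I$.

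The key point is then to connect this family of twists to $\pi\otimes\lambda$. Since $G/H$ is abelian and amenable, its Pontryagin dual carries the regular representation, and $\lambda=\ind_H^G 1_H$ on $L^2(G/H)$ decomposes (as a weak-equivalence matter) over the characters $\chi\in(G/H)\widehat{\;\;}$: concretely $\lambda\approx\bigoplus_{\chi}^{\oplus}\chi$ in the sense of weak containment in both directions, because $1_{(G/H)\widehat{\;\;}}$ is weakly contained in $\lambda_{(G/H)\widehat{\;\;}}$ by amenability and conversely the trivial character's orbit under translation fills out the dual. Transporting this through the tensor product gives $\pi\otimes\lambda\approx\bigoplus_{\chi}^{\oplus}(\pi\otimes\chi)$. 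Combining with the kernel computation above yields $\ker_{\CG}(\pi\otimes\lambda)=\bigcap_{\chi}\ker_{\CG}(\pi\otimes\chi)=\bigcap_{\chi}\chi^{-1}\mdot I$. Therefore $\pi\approx\pi\otimes\lambda$ (equivalently $\ker(\pi\otimes\lambda)=I$) holds if and only if $\bigcap_{\chi}\chi\mdot I=I$, and since the $\chi$ range over a group and each $\chi\mdot{}$ is an isometric automorphism, this intersection equals $I$ exactly when $\chi\mdot I=I$ for every $\chi$.

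The step I expect to require the most care is the weak-equivalence decomposition $\pi\otimes\lambda\approx\bigoplus_{\chi}^{\oplus}(\pi\otimes\chi)$: one must justify that on an abelian amenable group the regular representation is weakly equivalent to the (direct integral over the) full family of characters, and that tensoring with $\pi$ preserves this weak equivalence in both directions. The forward containment uses amenability ($1\ll\lambda$, hence $\pi\ll\pi\otimes\lambda$), exactly as in the proof of Theorem~\ref{eLg_thm:induced_ideals}; the reverse direction uses that $\lambda$ is weakly contained in the collection of all characters because its kernel in $\Cst(G/H)$ is the intersection of all the character-kernels (the Gelfand picture for the commutative $\Cst$-algebra $\Cst(G/H)$). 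Once this is in place the equivalence of the two invariance conditions is a purely formal manipulation of kernels under the isometric dual action, and the corollary follows immediately.
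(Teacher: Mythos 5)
Your proposal is correct and takes essentially the same route as the paper's own proof: both reduce the statement to Theorem~\ref{eLg_thm:induced_ideals}, use the weak equivalence $\lambda\approx\{\chi:\chi\in(G/H)\widehat{\;\;}\}$ for the abelian quotient to obtain $\ker_{\CG}(\pi\otimes\lambda)=\bigcap_{\chi}\ker_{\CG}(\pi\otimes\chi)$, and translate twisting by characters into the dual action on ideals via $\pi(\chi\mdot f)=(\pi\otimes\chi)(f)$. The only difference is expository: you make explicit the identification $\ker_{\CG}(\pi\otimes\chi)=\chi^{-1}\mdot I$ and the group-action argument showing that $\bigcap_{\chi}\chi\mdot I=I$ forces $\chi\mdot I=I$ for every $\chi$, steps the paper leaves implicit.
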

\begin{proof}
Let $\pi$ be a unitary representation of $G$ such that
$I=\ker_{\CG}\pi$. Theorem~\ref{eLg_thm:induced_ideals}
shows that $I$ is induced from $H$ if and only if
$\pi\approx\pi\otimes\lambda$. Since $G/H$ is
abelian, it follows
$\lambda\approx\{\chi:\chi\in(G/H)\widehat{\;\;}\}$
and hence
\[\ker_{\CG}\pi\otimes\lambda=
\bigcap_{\chi\in (G/H)\widehat{\;\;}}
\ker_{\CG}\pi\otimes\chi\subset\ker_{\CG}\pi\;.\]
Thus we see that $\pi\otimes\lambda\approx\pi$ if
and only if $\ker_{\CG}\pi\otimes\chi=\ker_{\CG}\pi$
for all $\chi$. This is the case if and
only if $\chi\mdot I=I$ for all
$\chi\in(G/H)\widehat{\;\;}$.
\end{proof}

The preceding corollary displays a close connection
to the $L^1$-results of Leptin and Hauenschild, Ludwig.
In 1968 Leptin characterized the induced ideals of
generalized $L^1$-algebras / twisted covariance
algebras $L^1(G,\mcA,\tau)$, see  Satz~8 and
Satz~9 of~\cite{Lep2}. His results imply
Theorem~\ref{eLg_thm:induced_ideals_in_L1} and
Lemma~\ref{eLg_lem:convolution_with_Ginvariant_ideals}
below. In 1981 Hauenschild and Ludwig gave a different
proof of Theorem~\ref{eLg_thm:induced_ideals_in_L1}
using $L^1$-$L^\infty$-duality, see Theorem~2.3
of~\cite{HauLud}. These $L^1$-results hold true
without the additional assumption of amenability.\\\\
An ideal $I$ of $\LG$ is said to be induced from $H$
if there exists an ideal $J$ of $\LH$ such that
$I=\ind_H^G(J)=\left(\,\LG\ast J\ast\LG\,\right)\clos$.

\begin{thm}
\label{eLg_thm:induced_ideals_in_L1}
Let $H$ be a closed normal subgroup of a locally compact
group $G$.  An ideal $I$ of $\LG$ is induced from $H$
if and only if it is $\mcC_\infty(G/H)$-invariant. Induction
and restriction gives a bijection between the set
of all $\mcC_\infty(G/H)$-invariant ideals $I$ of $\LG$
and the set of all $G$-invariant ideals $J$ of $\LH$.
\end{thm}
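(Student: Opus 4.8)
The plan is to obtain the statement from the classical theorems it paraphrases rather than to reprove it from scratch: it is contained in Leptin's Satz~8 and Satz~9 of~\cite{Lep2}, applied with trivial twisting cocycle, and it coincides with Theorem~2.3 of Hauenschild and Ludwig~\cite{HauLud}. So the cleanest route is to check that the present hypotheses match theirs and to invoke those results. The essential point to emphasize is that, unlike the $\Cst$-versions Theorem~\ref{eLg_thm:induced_ideals} and Theorem~\ref{eLg_thm:induction_and_restriction_are_bijections}, no amenability of $G/H$ is required; this is why the $\Cst$-proof cannot simply be quoted and a genuinely $L^1$-theoretic argument is needed. I sketch that argument so that the role of the $\mcC_\infty(G/H)$-invariance becomes transparent.

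First I would fix a Borel cross-section of $G/H$ and use it to identify $\LG$ with the generalized $L^1$-algebra $L^1(G/H,\LH,\tau)$ of $\LH$-valued integrable sections over $G/H$, where the twisting $\tau$ encodes the conjugation action of $G$ on $H$ together with the modular factor $\Delta_{G,H}$. Under this identification the module action $(\phi\mdot f)(x)=\phi(xH)\,f(x)$ of $\mcC_\infty(G/H)$ becomes fibrewise multiplication by the scalar field $\phi$, and it commutes with the right action of $a\in\LH$ because $\phi$ is constant on cosets; concretely $\phi\mdot(f\ast a)=(\phi\mdot f)\ast a$. The content of Leptin's Satz~8 is then the description of $\ind_H^G(J)=(\LG\ast J\ast\LG)\clos$ as exactly the space of sections taking values in the fibre ideal $J\subseteq\LH$, which forces $J$ to be $G$-invariant in order for this section space to be a two-sided ideal. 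Granting this dictionary, the forward implication is immediate, since a space of sections with all values in $J$ is visibly stable under fibrewise multiplication by $\mcC_\infty(G/H)$; and one reads off, just as in the remark following Theorem~\ref{eLg_thm:induced_ideals}, that $\res_H^G(I)$ is a $G$-invariant ideal of $\LH$ and that $\res_H^G(\ind_H^G(J))=J$ for every $G$-invariant $J$.

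The substance of the theorem, and the step I expect to be the main obstacle, is the converse: that a $\mcC_\infty(G/H)$-invariant ideal $I$ of $\LG$ is already determined by its fibres, i.e.\ equals $\ind_H^G(\res_H^G(I))$. This is a localization statement — one must verify that membership $f\in I$ can be tested coset by coset — and it is here that the $\mcC_\infty(G/H)$-module structure does the real work. Following Hauenschild and Ludwig~\cite{HauLud}, I would pass to the weak-$\ast$ closed annihilator $I^{\perp}\subseteq L^\infty(G)$ and exploit $L^1$-$L^\infty$-duality: the $\mcC_\infty(G/H)$-invariance of $I$ translates into a $G/H$-support condition on $I^{\perp}$ which, after localizing by a partition of unity subordinate to a cover of $G/H$ and then reassembling, forces $I^{\perp}$ to be the annihilator of the section space with fibre $J=\res_H^G(I)$. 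With both inclusions established, induction and restriction are mutually inverse maps, yielding the asserted bijection between the $G$-invariant ideals of $\LH$ and the $\mcC_\infty(G/H)$-invariant ideals of $\LG$.
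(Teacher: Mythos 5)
Your approach coincides with the paper's: the paper offers no proof of this theorem, stating instead that it follows from Satz~8 and Satz~9 of Leptin~\cite{Lep2} (via the identification of $\LG$ with a generalized $L^1$-algebra over $G/H$ with fibre $\LH$) and that Hauenschild and Ludwig reproved it by $L^1$-$L^\infty$-duality in Theorem~2.3 of~\cite{HauLud}, emphasizing, exactly as you do, that these $L^1$-results need no amenability assumption on $G/H$. Your sketch of how those cited arguments run is consistent with the sources and with the way the paper later uses the twisted covariance algebra picture.
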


Here $\mcC_\infty(G/H)$ denotes the continuous functions
on $G/H$ vanishing at infinity.

\begin{lem}
\label{eLg_lem:convolution_with_Ginvariant_ideals}
Let $H$ be a closed normal subgroup of a locally
compact group $G$. If $J$ is a closed, $G$-invariant
ideal of $\LH$, then $J\ast\LG$ is contained in the
closure of $\LG\ast J$. Similarly $\LG\ast J$ is
contained in the closure of $J\ast\LG$.
\end{lem}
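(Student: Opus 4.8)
The plan is to prove the first inclusion $J\ast\LG\subset\overline{\LG\ast J}$, and to obtain the second one, $\LG\ast J\subset\overline{J\ast\LG}$, by the symmetric argument (interchanging the roles of left and right and replacing the conjugation $a\mapsto a^z$ by $a\mapsto a^{z^{-1}}$). The heart of the matter is to show that the closed left ideal $\overline{\LG\ast J}$ is in fact a \emph{two-sided} ideal of $\LG$, i.e.\ that it is also invariant under right convolution by $\LG$. Granting this, the first inclusion follows quickly: if $(u_i)$ is a bounded approximate identity of $\LG$, then for $a\in J$ and $f\in\LG$ the element $u_i\ast a$ lies in $\LG\ast J$ by the right action, hence $(u_i\ast a)\ast f\in\overline{\LG\ast J}$ by two-sidedness, while the double-centralizer relation $f\ast(a\ast g)=(f\ast a)\ast g$ gives $(u_i\ast a)\ast f=u_i\ast(a\ast f)\to a\ast f$. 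Thus $a\ast f\in\overline{\LG\ast J}$.

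So the task reduces to right-invariance of $\overline{\LG\ast J}$, and I would first establish invariance under right translations. Write $R_zf(x)=f(xz)$ for the right translation by $z\in G$; it acts boundedly on $\LG$. For a generator $f\ast a$ with $f\in\LG$ and $a\in J$, a direct computation using the normality of $H$ and the substitution $h\mapsto zhz^{-1}\in H$ in the defining integral of the right action shows that
\[R_z(f\ast a)=(R_zf)\ast a^{z}\]
for a conjugate $a^{z}$ of $a$, all the factors $\Delta_{G,H}$, $\Delta_H$ and $\Delta_G$ produced by the substitution combining into exactly the modular factor occurring in the definition of $a^{z}$. The decisive point is that $a^{z}\in J$ because $J$ is $G$-invariant; consequently $R_z(f\ast a)=(R_zf)\ast a^{z}\in\LG\ast J$. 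Passing to finite linear combinations and to the closure (using that $R_z$ is bounded), I conclude that $\overline{\LG\ast J}$ is invariant under every right translation $R_z$, $z\in G$.

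Finally I would upgrade right-translation invariance to invariance under right convolution by all of $\LG$. For $p\in\overline{\LG\ast J}$ and $g\in\LG$ the element $p\ast g$ is an $\LG$-valued Bochner integral of the form $p\ast g=\int_G g(z)\,\Delta_G(z^{-1})\,R_{z^{-1}}p\;dz$, whose integrand is a bounded continuous function of $z$ taking values in the closed subspace $\overline{\LG\ast J}$; hence $p\ast g\in\overline{\LG\ast J}$. This proves that $\overline{\LG\ast J}$ is a two-sided ideal and completes the argument. I expect the main obstacle to be the middle step: verifying the translation–conjugation identity $R_z(f\ast a)=(R_zf)\ast a^{z}$ with all three modular functions correctly accounted for, since this is the only place where the $G$-invariance of $J$ is genuinely used and where the bookkeeping with $\Delta_{G,H}$ must be carried out with care.
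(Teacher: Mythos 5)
Your proof is correct, but it is not the route the paper takes: the paper offers no self-contained proof of this lemma at all, obtaining it (together with Theorem~\ref{eLg_thm:induced_ideals_in_L1}) as a consequence of Leptin's Satz~8 and Satz~9 on the ideal theory of generalized $L^1$-algebras $L^1(G,\mcA,\tau)$ in~\cite{Lep2}. Your argument is elementary and self-contained: you prove right-translation invariance of the closed subspace $\overline{\LG\ast J}$ (the only place where normality of $H$ and $G$-invariance of $J$ enter), upgrade this to invariance under right convolution by writing $p\ast g$ as a vector-valued integral of right translates, and then capture $a\ast f$ through the approximate-identity/double-centralizer computation $(u_i\ast a)\ast f=u_i\ast(a\ast f)\to a\ast f$. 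This is exactly the classical mechanism ``a closed subspace of $\LG$ is a right ideal if and only if it is right-translation invariant,'' and what it buys is complete independence from the twisted covariance machinery; what the paper's citation buys is that the same structure theory simultaneously yields Theorem~\ref{eLg_thm:induced_ideals_in_L1}, which the paper needs anyway.

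Two bookkeeping points in your sketch should be made explicit, though neither is a gap. First, in the identity $R_z(f\ast a)=(R_zf)\ast b$ the element $b$ comes out as a positive scalar multiple of $a^{z^{-1}}$ (or of $a^z$, depending on conventions); scalar and direction are immaterial because $J$ is a $G$-invariant linear subspace, but the assertion that ``all modular factors combine'' rests on the fact that a modular function is invariant under topological automorphisms, so that $\Delta_H(z^{-1}hz)=\Delta_H(h)$ and hence $\Delta_{G,H}(z^{-1}hz)=\Delta_{G,H}(h)$ for all $z\in G$, $h\in H$; without this observation the factor $\Delta_{G,H}(h^{-1})$ in the defining integral of the right action does not transform correctly under the substitution $h\mapsto z^{-1}kz$. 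Second, for general $g\in\LG$ your integrand $z\mapsto g(z)\,\Delta_G(z^{-1})\,R_{z^{-1}}p$ is not continuous ($g$ is merely measurable); it is strongly measurable with integrable norm $|g(z)|\,|p|_1$, which is all the Bochner integral requires, or alternatively one takes $g\in\Cc{G}$ first, where the integrand is continuous with compact support, and then passes to all of $\LG$ using density and the bound $|p\ast g|_1\le|p|_1\,|g|_1$. With these two repairs your argument is complete, and the second inclusion follows by the symmetric (left-handed) version, as you say.
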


This implies $I=\ind_H^G(J)=(J\ast\LG)\clos=(\LG\ast J)\clos$.
For $G$-invariant $\Cst$-ideals we even know
$J\ast\CG=\CG\ast J$ by Corollary~2.3 of the main lemma
in~\cite{Rief2}.\\\\
Now we can state our first criterion for ideals of $\CG$
to be $\LG$-determined.

\begin{prop}\label{eLg_prop:induced_from_%
regular_subgroup}
Let $G$ be a locally compact group and $H$ a
$\ast$-regular closed subgroup. If the ideal $I$ of
$\CG$ is induced from $H$, then $I$ is $\LG$-determined.
\end{prop}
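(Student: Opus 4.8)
The goal is to show that an ideal $I$ of $\CG$ induced from a $\ast$-regular closed subgroup $H$ is $\LG$-determined. By condition~\textit{(iii)} of Definition~\ref{Bsa_def:A_determined}, it suffices to show that $I'=I\cap\LG$ is dense in $I$ with respect to the $\Cst$-norm. The plan is to exploit the explicit description $I=\ind_H^G(J)=(\,\CG\ast J\ast\CG\,)\clos$, where $J=\res_H^G(I)$, and to transfer the $\ast$-regularity of $H$ (which says exactly that $J\cap\LH$ is dense in $J$) upward to $G$ via the inducing construction.

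First I would set $J=\res_H^G(I)$ and observe that $J$ is a closed ideal of $\CH$. Since $H$ is $\ast$-regular, $J$ is $\LH$-determined, so by Definition~\ref{Bsa_def:A_determined}\,\textit{(iii)} the $L^1$-part $J'=J\cap\LH$ is dense in $J$ in the $\Cst(H)$-norm. The key point is that the convolution actions of $\CH$ on $\CG$ described in Section~\ref{eLg_sec:inducing_ideals} restrict to continuous bilinear maps $\LG\times\LH\to\LG$ and $\LH\times\LG\to\LG$, bounded in the respective $L^1$-norms. Consequently, for $f,g\in\LG$ and $a\in J'$ the elements $f\ast a\ast g$ lie in $\LG$, and they lie in $I$ because $a\ast\CG\subset I$ by definition of $\res_H^G$; hence $f\ast a\ast g\in I'$. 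So the closed linear span of $\{\,f\ast a\ast g : f,g\in\LG,\ a\in J'\,\}$, taken in the $\Cst$-norm, is contained in $I'{}\clos$.

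Next I would argue that this span is in fact all of $I$. Because $J'$ is $\Cst$-dense in $J$ and convolution $\CG\times\CH\times\CG\to\CG$ is $\Cst$-norm continuous, the $\Cst$-closure of $\{\,f\ast a\ast g : f,g\in\LG,\ a\in J'\,\}$ equals the $\Cst$-closure of $\{\,f\ast b\ast g : f,g\in\CG,\ b\in J\,\}$; here one also uses that $\LG$ is $\Cst$-dense in $\CG$ so that the $L^1$-functions $f,g$ already exhaust the outer factors up to closure. But the latter closure is by definition $\ind_H^G(J)=I$. Combining the two inclusions gives $I\subset I'{}\clos\subset I$, whence $I'$ is dense in $I$ and $I$ is $\LG$-determined.

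The main obstacle I anticipate is justifying the density interchange in the previous paragraph rigorously: one must check that approximating $a\in J$ by elements of $J'$ in the \emph{$\Cst(H)$-norm} translates, after convolution, into $\Cst(G)$-norm approximation of the corresponding elements of $I$. This rests on the boundedness of the extended actions of $\CH$ on $\CG$ stated in Section~\ref{eLg_sec:inducing_ideals}, namely $|f\ast a|_{\Cst(G)}\le|f|_{\Cst(G)}\,|a|_{\Cst(H)}$ and the analogous right-hand estimate, so that $|f\ast(a-a_n)\ast g|_{\Cst(G)}\le|f|_{\Cst(G)}\,|a-a_n|_{\Cst(H)}\,|g|_{\Cst(G)}\to 0$. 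Granting these estimates, the density transfer is immediate and the argument closes; the heart of the proof is therefore the observation that $\ast$-regularity is preserved under the inducing construction precisely because induction is built from the same convolution operations on both the $\Cst$- and the $L^1$-level.
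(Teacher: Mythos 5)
Your proof is correct, but it runs along a genuinely different track than the paper's. The paper verifies condition \textit{(ii)} of Definition~\ref{Bsa_def:A_determined}: given $\rho\in\widehat{G}$ with $I'\subset\ker_{\LG}\rho$, one first deduces $J'\subset\ker_{\LH}\rho$ for $J=\res_H^G(I)$ (using, as you do, that $J'$ convolved with $\LG$ lands in $I'$, plus nondegeneracy of $\rho$), then applies $\ast$-regularity of $H$ to the closed ideal $\ker_{\CH}\rho$ to conclude $J\subset\ker_{\CH}\rho$, and finally bounces back up: $I=\ind_H^G(J)\subset\ind_H^G(\ker_{\CH}\rho)\subset\ker_{\CG}\rho$. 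You instead verify condition \textit{(iii)}, the $\Cst$-density of $I'$ in $I$, by a pure approximation argument. Both proofs rest on the same two pillars, namely the identity $I=\ind_H^G(\res_H^G(I))$ and $\ast$-regularity of $H$ applied to $J$, but they buy different things. The paper's route needs no norm estimates at all: the multiplier action of $\CH$ on $\CG$ enters only through the algebraic identity $\rho(a\ast f)=\rho(a)\,\rho(f)$, which makes the proof three lines long. Your route hinges on the contractivity $|f\ast a\ast g|_{\ast}\le|f|_{\ast}\,|a|_{\ast}\,|g|_{\ast}$, which you rightly single out as the crux; it does hold, because the extension of the convolution actions to $\CH$ asserted in Section~\ref{eLg_sec:inducing_ideals} is precisely a $\ast$-homomorphism of $\CH$ into the double centralizer algebra of $\CG$, and $\ast$-homomorphisms between $\Cst$-algebras are automatically contractive (the $L^1$-level bounds you also invoke are elementary Fubini estimates). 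In return, your argument is representation-free and produces the density statement $I=(I')\clos$ directly, rather than obtaining it indirectly through the equivalence of conditions \textit{(i)}--\textit{(iv)}; it also makes explicit where the $\ast$-regularity of $H$ passes through the inducing construction, namely via $\Cst$-approximation of $J$ by $J'$ inside the generators of $\ind_H^G(J)$.
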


\begin{proof}
Let $J=\res_H^G(I)$ so that $I=\ind_H^G(J)$. If
$\rho\in\widehat{G}$ with $I'=I\cap\LG\subset\ker_{\LG}\rho$,
then $J'\subset\ker_{\LH}\rho$. Since $H$ is $\ast$-regular,
it follows $J\subset\ker_{\CH}\rho$. This implies
$I=\ind_H^G(J)\subset\ind_H^G(\ker_{\CH}\rho)%
\subset\ker_{\CG}\rho$.
\end{proof}

In the rest of this article we will focus on exponential
Lie groups (i.e.~connected, simply connected, solvable
Lie groups $G$ such that the exponential map
$\exp:\frakg\to G$ is a global diffeomorphism). We
will use the construction of irreducible representations
$\pi=\mcK(f)=\ind_P^G\chi_f$ via Pukanszky / Vergne
polarizations $\frakp$ at $f$ and the bijectivity of
the Kirillov map $\mcK:\frakg^\ast/\coAd(G)\to\widehat{G}$,
see Chapters~4 and~6 of~\cite{Bern_et_alii}, and Chapter~1
of~\cite{LepLud}. Mostly we regard $\mcK$ as a map from
$\frakg^\ast$ onto $\widehat{G}$ which is constant
on coadjoint orbits. 

\begin{lem}
Let $G$ be an exponential Lie group with Lie
algebra $\frakg$. Let $f\in\frakg^\ast$ and
$q\in[\frakg,\frakg]^\bot\subset\frakg^\ast$. If
we define $\pi=\mcK(f)$ and the character
$\alpha(\exp X)=e^{iq(X)}$ of $G$, then
$\mcK(f+q)$ and $\pi\otimes\alpha$ are
unitarily equivalent.
\end{lem}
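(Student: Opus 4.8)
The plan is to realize both representations through a single polarization and then identify the inducing characters, so that the statement reduces to part \textit{(i)} of Fell's restriction-induction Lemma~\ref{eLg_lem:restriction_induction}. Fix a Pukanszky (or Vergne) polarization $\frakp$ at $f$ and set $P=\exp\frakp$, so that $\pi=\mcK(f)=\ind_P^G\chi_f$ with $\chi_f(\exp X)=e^{if(X)}$ for $X\in\frakp$. The first task is to check that $\frakp$ is equally a polarization at $f+q$; granting this, $\mcK(f+q)=\ind_P^G\chi_{f+q}$ is computed through the same subgroup $P$, and the two inducing characters are related by $\chi_{f+q}=\chi_f\cdot(\alpha\,|\,P)$, since both are one-dimensional and $(f+q)(X)=f(X)+q(X)$ on $\frakp$.

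The key point for the first task is that adding $q$ changes neither the coadjoint orbit structure nor the alternating form attached to $f$. Indeed, for $g\in G$ one has $\Ad(g^{-1})X-X\in[\frakg,\frakg]$, whence $(\coAd(g)q)(X)=q(\Ad(g^{-1})X)=q(X)$; thus $q$ is $\coAd(G)$-invariant and the orbit of $f+q$ is exactly $\coAd(G)f+q$. Moreover, since $q$ vanishes on $[\frakg,\frakg]$, the bilinear forms $B_f(X,Y)=f([X,Y])$ and $B_{f+q}(X,Y)=(f+q)([X,Y])$ coincide. Consequently the conditions defining a polarization — subalgebra, maximal isotropy for $B_f$, subordination $(f+q)([\frakp,\frakp])=0$, and the Pukanszky condition $(f+q)+\frakp^\bot=\big(f+\frakp^\bot\big)+q\subset\coAd(G)f+q=\coAd(G)(f+q)$ — hold for $f+q$ precisely because they hold for $f$. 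Hence $\frakp$ is a polarization at $f+q$ and $\mcK(f+q)=\ind_P^G\chi_{f+q}$ is well defined through it.

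It then remains to invoke Lemma~\ref{eLg_lem:restriction_induction}\,\textit{(i)} applied to the representation $\alpha$ of $G$, the subgroup $P$, and the representation $\chi_f$ of $P$. Since $\alpha\,|\,P$ and $\chi_f$ are characters of $P$, their Kronecker product is the ordinary product $(\alpha\,|\,P)\otimes\chi_f=\chi_{f+q}$, and the lemma yields
\[
\mcK(f+q)=\ind_P^G\big((\alpha\,|\,P)\otimes\chi_f\big)\cong\alpha\otimes\ind_P^G\chi_f=\alpha\otimes\pi\cong\pi\otimes\alpha,
\]
which is the assertion.

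I expect the only genuine obstacle to be the verification that $\frakp$ remains a valid polarization at $f+q$, and in particular the preservation of the Pukanszky condition. This is exactly where the hypothesis $q\in[\frakg,\frakg]^\bot$ enters twice: once to keep the form $B_f$ unchanged, and once, via the $\coAd(G)$-invariance of $q$, to translate the orbit and the affine space $f+\frakp^\bot$ simultaneously by $q$. Everything else is the formal tensor identity for induced representations supplied by Fell's lemma.
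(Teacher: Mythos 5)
Your proof is correct and follows essentially the same route as the paper: both arguments realize $\mcK(f)$ and $\mcK(f+q)$ by inducing from one and the same Pukanszky polarization $\frakp$ (the paper simply asserts that $\frakp$ is also a polarization at $f+q$, which you verify in detail via the $\coAd(G)$-invariance of $q$) and then identify $\chi_{f+q}=\chi_f\cdot(\alpha\,|\,P)$. The only cosmetic difference is the last step: the paper writes down the explicit intertwining unitary $(U\varphi)(x)=\overline{\alpha(x)}\,\varphi(x)$, whereas you invoke Lemma~\ref{eLg_lem:restriction_induction}\,\textit{(i)}, whose proof amounts to exactly that multiplication operator.
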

\begin{proof}
Let $\frakp\subset\frakg$ be a Pukanszky polarization
at $f$, and hence also at $f+q$. Let $\chi_f$ and
$\chi_{f+q}$ denote characters of~$P$ with differential
$f$ and $f+q$. By definition of the Kirillov map we have
$\pi=\ind_P^G\chi_f$ and $\rho=\mcK(f+q)=\ind_P^G%
\chi_{f+q}$. Now one verifies easily that
$(U\varphi)\,(x)=\overline{\alpha(x)}\;\varphi(x)$
defines a unitary isomorphism from $\frakH_{\pi}=%
L^2_{\chi_f}(G)$ onto $\frakH_\rho=L^2_{\chi_{f+q}}(G)$
such that $\rho=U(\pi\otimes\alpha)U^{-1}$. This
proves our claim.
\end{proof}

The next proposition enlightens the significance of
the 'stabilizer' $M$.

\begin{prop}\label{eLg_prop:ideal_induced_from_M}
Let $G$ be an exponential Lie group, $\frakn$ a coabelian
(nilpotent) ideal of its Lie algebra $\frakg$, and
$f\in\frakg^\ast$. Let $M$ denote the connected subgroup
of $G$ with Lie algebra $\frakm=\frakg_f+\frakn$. If
$\pi=\mcK(f)$, then the primitive ideal $\ker_{\CG}\pi$
is induced from the stabilizer $M$.
\end{prop}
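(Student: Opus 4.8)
The plan is to realize $\ker_{\CG}\pi$ as an ideal induced from $M$ by verifying the invariance criterion for coabelian normal subgroups. First I would observe that $\frakm=\frakg_f+\frakn$ is itself a coabelian ideal: since $\frakn$ is coabelian we have $[\frakg,\frakg]\subset\frakn$, whence $[\frakg,\frakm]\subset[\frakg,\frakg]\subset\frakn\subset\frakm$, so $\frakm$ is an ideal and $\frakg/\frakm$ is abelian. Thus $M$ is a coabelian normal subgroup with $G/M$ abelian, and Corollary~\ref{eLg_cor:ideals_induced_from_coabelian_subgroups} reduces the claim to showing $\chi\mdot I=I$ for every $\chi\in(G/M)\widehat{\;\;}$, where $I=\ker_{\CG}\pi$.

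Next I would make these characters explicit. As $G/M$ is a connected simply connected abelian group, every $\chi\in(G/M)\widehat{\;\;}$ has the form $\chi=\alpha$ with $\alpha(\exp X)=e^{iq(X)}$ for a unique $q\in\frakm^\bot=\frakg_f^\bot\cap\frakn^\bot$; this is consistent because $\frakm\supset[\frakg,\frakg]$ gives $\frakm^\bot\subset[\frakg,\frakg]^\bot$. The preceding lemma yields $\pi\otimes\alpha\cong\mcK(f+q)$, and unwinding the action $\chi\mdot a$ shows $\chi\mdot I=\ker_{\CG}(\pi\otimes\bar\alpha)=\ker_{\CG}\mcK(f-q)$. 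Since $G$ is exponential, hence type~I, and $\mcK$ is a bijection, two such irreducibles have equal kernel precisely when the underlying functionals lie in one coadjoint orbit. Replacing $q$ by $-q$, the desired invariance is therefore equivalent to the purely geometric assertion that $f+q\in\coAd(G)f$ for all $q\in\frakm^\bot$.

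This orbit containment is the step I expect to carry the real content. To prove it I would fix $q\in\frakm^\bot$ and use that the tangent space to the orbit at $f$ is $\coad(\frakg)f=\frakg_f^\bot$; as $q\in\frakg_f^\bot$, I may choose $X_0\in\frakg$ with $\coad(X_0)f=q$. The decisive point is that $q\in\frakn^\bot$ linearizes the coadjoint curve: for every $Y\in\frakg$ one has $[X_0,Y]\in[\frakg,\frakg]\subset\frakn$, so $\bigl(\coad(X_0)q\bigr)(Y)=-q([X_0,Y])=0$ and hence $\coad(X_0)^2f=\coad(X_0)q=0$. The exponential series for $\coAd(\exp X_0)=\exp\bigl(\coad(X_0)\bigr)$ then terminates, giving
\[\coAd(\exp X_0)\,f=f+\coad(X_0)\,f=f+q,\]
so $f+q$ indeed lies in the coadjoint orbit of $f$. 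Reading the reductions backwards, this shows $\chi\mdot I=I$ for all $\chi\in(G/M)\widehat{\;\;}$ and therefore that $\ker_{\CG}\pi$ is induced from $M$.
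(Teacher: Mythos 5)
Your argument is correct, and its skeleton coincides with the paper's: both proofs reduce the claim, via the corollary characterizing ideals induced from coabelian normal subgroups, to the $(G/M)\widehat{\;\;}$-invariance of $I=\ker_{\CG}\pi$, use the preceding lemma to identify the twist $\pi\otimes\alpha$ with $\mcK(f+q)$ for $q\in\frakm^\bot\subset[\frakg,\frakg]^\bot$, and thereby reduce everything to the saturation statement $f+\frakm^\bot\subset\coAd(G)f$. Where you genuinely differ is in how that saturation is established. The paper quotes the stronger known fact $\coAd(G_l)f=f+\frakm^\bot$, where $G_l$ is the stabilizer of $l=f\,|\,\frakn$ (Bernat et al., p.~23), so that a specific subgroup already sweeps out the whole affine space. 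You instead give a short self-contained computation: since $\frakm^\bot\subset\frakg_f^\bot=\coad(\frakg)f$ you may pick $X_0$ with $\coad(X_0)f=q$; then $\coad(X_0)q=0$ because $[\frakg,\frakg]\subset\frakn$ and $q\in\frakn^\bot$, so the exponential series for $\coAd(\exp X_0)$ terminates and $\coAd(\exp X_0)f=f+q$. This computation is correct (the identity $\coad(\frakg)f=\frakg_f^\bot$ is the standard finite-dimensional fact that $X\mapsto\coad(X)f$ has kernel $\frakg_f$ and image of complementary dimension inside $\frakg_f^\bot$), and it buys a fully elementary, citation-free proof; what it gives up is only the finer transitivity statement about $G_l$, which the proposition does not need. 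Two minor remarks: you only use the implication that functionals on the same coadjoint orbit yield equal $\Cst$-kernels, which is just constancy of $\mcK$ on orbits, so your appeal to type~I-ness and bijectivity of the Kirillov map for the converse direction is superfluous; and your explicit verification that $\frakm$ is a coabelian ideal, hence $M$ a closed normal subgroup to which the corollary applies, is a point the paper leaves implicit, so it is good that you spelled it out.
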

\begin{proof}
First we observe that the orbit $\coAd(G)f$ is saturated
over $\frakm$: Let $G_l$ denote the (connected)
stabilizer of $l=f\,|\,\frakn$ in $G$. Since
$\coAd(G_l)f=f+\frakm^\bot$, it follows
$\coAd(G)f=\coAd(G)f+\frakm^\bot$, compare p.~23
of~\cite{Bern_et_alii}. Now the preceding lemma
implies $\pi\otimes\alpha=\mcK(f+q)=\mcK(f)=\pi$
for all $q\in\frakm^\bot$ and characters
$\alpha(\exp X)=e^{iq(X)}$ of $G/M$ proving
$\ker_{\CG}\pi$ to be $(G/M)\widehat{\;\;}$-invariant.
Hence $\ker_{\CG}\pi$ is induced from $M$ by Corollary~%
\ref{eLg_cor:ideals_induced_from_coabelian_subgroups}.
\end{proof}

\boldmath
\section{The ideal theory of $\ast\,$-regular
 exponential Lie groups}
\unboldmath\label{eLg_sec:ideal_theory_of_exponential_groups}
The results of this subsection are not new. They can be
found in Boidol's paper~\cite{Boid2}, and in a more
general context in~\cite{Boid3}. For the convenience
of the reader we give a short proof for the if-part
of Theorem~5.4 of~\cite{Boid2} using the results of
the previous section. The following definition has
been adapted from the introduction of~\cite{Boid3}.

\begin{defn}
Let $G$ be a locally compact group. If $A$ is a closed
normal subgroup of $G$ and $\dot{G}=G/A$, then $T_A$
denotes the quotient map from $\CG$ onto
$C^\ast(\dot{G})$. We say that a closed ideal $I$ of
$\CG$ is essentially induced from a $\ast$-regular
subgroup if there exist closed subgroups $A\subset H$
of $G$ with $A$ normal in $G$ such that the following
conditions are satisfied:
\begin{enumerateroman}
\item $\;\ker_{\CG}\,T_A\subset I$,
\item $\;H/A$ is $\ast$-regular,
\item $\;I$ is induced from $H$ in the sense of
Definition~\ref{eLg_def:induced_ideals}.
\end{enumerateroman}
\end{defn}

Recall that connected locally compact groups whose
Haar measure have polynomial growth are $\ast\,$-regular,
and that connected nilpotent Lie groups have polynomial
growth. If we pass to the quotient $\dot{G}$ by
Proposition~\ref{Bsa_lem:quotients_of_algebras}, then it
follows from Proposition~\ref{eLg_prop:induced_from_%
regular_subgroup} that all ideals $I$ of $\CG$ which are
essentially induced from a $\ast\,$-regular subgroup
are $\LG$-determined.

\begin{defn}
Let $\frakg$ be an exponential Lie algebra and
$\frakn=[\frakg,\frakg]$ its commutator ideal. We say
that $\frakg$ satisfies condition (R) if the following
is true: If $f\in\frakg^\ast$ is arbitrary and
$\frakm=\frakg_f+\frakn$ is its stabilizer, then $f=0$
on $\frakm^{\infty}=\bigcap_{k=1}^\infty C^k\frakm$.
Here the $C^k\frakm$ are the ideals of the descending
central series. Recall that $\frakm^{\infty}$ is the
smallest ideal of $\frakm$ such that
$\frakm/\frakm^{\infty}$ is nilpotent.
\end{defn}

Note that the stabilizer $\frakm=\frakg_f+\frakn$
depends only on the orbit $\coAd(G)f$. The following
observation is extremely useful: Let $f\in\frakg^\ast$
and $\frakm=\frakg_f+\frakn$ be its stabilizer such
that $\frakm/\frakm^\infty$ is nilpotent. If
$\gamma_1,\ldots,\gamma_r$ are the roots
of $\frakg$, then we define the ideal
$\tilde{\frakm}=\bigcap_{i\in S}\ker\gamma_i$
of $\frakg$ where $S=\{i:\ker\gamma_i\supset\frakm\}$.
It is easy to see that $\frakm\subset\tilde{\frakm}$
and that $\tilde{\frakm}/\frakm^\infty$ is nilpotent,
too. Further there are only finitely many ideals
$\tilde{\frakm}$ of this kind.

\begin{thm}\label{eLg_thm:conditionR_implies_regularity}
Let $G$ be an exponential Lie group such that its Lie
algebra $\frakg$ satisfies condition $(R)$. Then any
ideal $I$ of $\CG$ is a finite intersection of ideals
which are essentially induced from a nilpotent normal
subgroup. In particular $G$ is $\ast\,$-regular.
\end{thm}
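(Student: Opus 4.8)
The plan is to realise every closed ideal $I$ of $\CG$ as a \emph{finite} intersection of ideals, each essentially induced from a normal subgroup with nilpotent quotient. Since connected nilpotent Lie groups have polynomial growth and are therefore $\ast$-regular, such ideals are $\LG$-determined, as observed just after the definition of essential induction (which rests on Proposition~\ref{eLg_prop:induced_from_regular_subgroup}); Lemma~\ref{Bsa_lem:finite_intersections} then shows $I$ itself to be $\LG$-determined, and since $I$ is arbitrary this is exactly the $\ast$-regularity of $G$. As $\CG$ is a $\Cst$-algebra we have $I=k(h(I))$, and identifying $\Prim\CG$ with $\frakg^\ast/\coAd(G)$ via the Kirillov map I write $I=\bigcap_{f\in\Omega}\ker_{\CG}\mcK(f)$ for a $\coAd(G)$-invariant set $\Omega\subset\frakg^\ast$.

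All the finiteness comes from the root construction stated before the theorem. The stabilizer $\frakm=\frakg_f+\frakn$, hence also the ideal $\tilde{\frakm}=\bigcap_{i\in S}\ker\gamma_i$ with $S=\{i:\ker\gamma_i\supset\frakm\}$, depends only on the coadjoint orbit of $f$, and as $f$ runs through $\frakg^\ast$ the algebra $\tilde{\frakm}$ takes only finitely many values $\frakb_1,\dots,\frakb_s$. Fix $\frakb=\frakb_j$. As an intersection of root kernels it is an ideal of $\frakg$, and it contains $\frakn$. Its nilpotent residual $\frakb^{\infty}$ is characteristic in $\frakb$, hence stable under $\ad\frakg$, so $\frakb^{\infty}$ is again an ideal of $\frakg$. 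Let $H$ and $A$ be the connected subgroups with $\Lie H=\frakb$ and $\Lie A=\frakb^{\infty}$; both are normal in $G$, the quotient $G/H$ is amenable (indeed abelian) since $\frakb\supset\frakn$, and $H/A$ is a connected nilpotent Lie group. I then split $\Omega=\bigsqcup_{j=1}^{s}\Omega_j$ by the orbit-invariant value $\tilde{\frakm}_f=\frakb_j$.

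For $f\in\Omega_j$ I claim that $\ker_{\CG}\mcK(f)$ is essentially induced from $H$ via $A$. By Proposition~\ref{eLg_prop:ideal_induced_from_M} it is induced from the stabilizer $M_f$, whose Lie algebra $\frakm_f=\frakg_f+\frakn$ lies between $\frakn$ and $\frakb$; since both $M_f$ and $H$ are coabelian and $(G/H)\widehat{\;\;}\subset(G/M_f)\widehat{\;\;}$, Corollary~\ref{eLg_cor:ideals_induced_from_coabelian_subgroups} upgrades this to induction from $H$. Condition~$(R)$ yields $f=0$ on $\frakm_f^{\infty}$, and because $\tilde{\frakm}_f/\frakm_f^{\infty}$ is nilpotent we get $\frakb^{\infty}=\tilde{\frakm}_f^{\infty}\subset\frakm_f^{\infty}$, whence $f=0$ on $\frakb^{\infty}$. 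Since $\frakb^{\infty}$ is a $\coAd(G)$-invariant ideal, $\mcK(f)$ is then trivial on $A$, i.e.\ $\ker_{\CG}T_A\subset\ker_{\CG}\mcK(f)$. Thus the three conditions defining essential induction hold with one and the \emph{same} pair $(H,A)$ for all $f\in\Omega_j$. The ideal $I_j=\bigcap_{f\in\Omega_j}\ker_{\CG}\mcK(f)$ is closed and each factor is induced from $H$, so Corollary~\ref{eLg_cor:intersection_of_induced_ideals} (with $H$ normal and $G/H$ amenable) shows $I_j$ to be induced from $H$; the inclusion $\ker_{\CG}T_A\subset I_j$ and the nilpotency of $H/A$ pass to the intersection. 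Hence $I_j$ is essentially induced from the nilpotent normal subgroup data $(H,A)$, and $I=\bigcap_{j=1}^{s}I_j$ is the desired finite intersection.

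The decisive step, and the only place condition~$(R)$ enters, is the containment $\ker_{\CG}T_A\subset\ker_{\CG}\mcK(f)$: it is exactly the vanishing of $f$ on $\frakm^{\infty}$ that lets one factor out the normal subgroup $A$ while keeping the quotient $H/A$ nilpotent. The accompanying difficulty is organisational rather than analytic --- one must trade the orbit-dependent $\frakm_f^{\infty}$, of which there may be a continuum, for the root-defined $\tilde{\frakm}$, whose nilpotent residual $\frakb^{\infty}$ takes only finitely many values and is an ideal of $\frakg$; this is what renders the intersection finite. With these two points settled, the rest is a direct assembly of the induction results of Section~\ref{eLg_sec:inducing_ideals}.
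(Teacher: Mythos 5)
Your proof is correct and follows essentially the same route as the paper's: the same finite decomposition of the hull by means of the root ideals $\tilde{\frakm}$, Proposition~\ref{eLg_prop:ideal_induced_from_M} together with condition~(R) (via the nilpotency of $\tilde{\frakm}/\frakm^\infty$) to exhibit each piece $I_j$ as essentially induced from a nilpotent normal subgroup, and then Corollary~\ref{eLg_cor:intersection_of_induced_ideals} and Lemma~\ref{Bsa_lem:finite_intersections} to conclude. The only, immaterial, deviation is that you upgrade induction from $M_f$ to induction from $H$ by character invariance (Corollary~\ref{eLg_cor:ideals_induced_from_coabelian_subgroups}) where the paper invokes induction in stages.
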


\begin{proof}
Let $I\lhd\CG$ be arbitrary. Since $I=k(h(I))$ by
Theorem~2.9.7 of~\cite{Dix3}, there is a closed,
$\coAd(G)$-invariant subset $\Lambda$ of $\frakg^\ast$
such that $I=\bigcap\,\{\ker_{\CG}\mcK(f)\,:f\in\Lambda\}$.
Further there exists a decomposition
$\Lambda=\bigcup_{k=1}^r \Lambda_k$ and ideals
$\{\tilde{\frakm}_k:1\le k\le r\}$ of $\frakg$
as in the preceding remark such that
$\frakg_f+\frakn\subset\tilde{\frakm}_k$ for all
 $f\in\Lambda_k$ where $\frakn=[\frakg,\frakg]$.
By induction in stages it follows from
Proposition~\ref{eLg_prop:ideal_induced_from_M}
that $\ker_{\CG}\mcK(f)$ is induced from $\tilde{M}_k$
for all $f\in \Lambda_k$. Let us define
$I_k=\bigcap\,\{\ker_{\CG}\mcK(f)\,:f\in\Lambda_k\}$.
Since $f=0$ on $\tilde{\frakm}_k^\infty$ by
condition~(R) and $\tilde{M}_k/\tilde{M}_k^\infty$
is nilpotent, we conclude from
Corollary~\ref{eLg_cor:intersection_of_induced_ideals}
that $I_k$ is essentially induced from a nilpotent
(and hence $\ast\,$-regular) normal subgroup. Finally
Lemma~\ref{Bsa_lem:finite_intersections} implies that
the ideal $I=\bigcap_{k=1}^r\,I_k$ is $\LG$-determined.
\end{proof}

\section{Closed orbits in the unitary dual  of the
nilradical}\label{eLg_sec:same_orbit_on_nilradical}
First we recall how to compute the $\Cst$-kernel of
$\pi\,|\,N$ in the Kirillov picture, compare Theorem~9
in Section~5 of Chapter~1 in~\cite{LepLud}. Note that
the linear projection $r:\frakg^\ast\onto\frakn^\ast$
given by restriction is $\coAd(G)$-equivariant so
that $r(\coAd(G)f)=\coAd(G)l$.

\begin{lem}\label{eLg_lem:restriction}
Let $G$ be an exponential Lie group and $\frakn$ a
coabelian ideal of its Lie algebra~$\frakg$. Let
$f\in\frakg^\ast$, $\pi=\mcK(f)\in\widehat{G}$,
$l=f\,|\,\frakn$, and $\sigma=\mcK(l)\in\widehat{N}$.
Then
\begin{equation}\label{eLg_equ:kernel_of_orbit}
\ker_{\CN}\pi=k(G\mdot\sigma)=\bigcap_{h\in\coAd(G)l}
\ker_{\CN}\mcK(h)\;.
\end{equation}
\end{lem}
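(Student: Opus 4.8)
The plan is to reduce this statement to the already-established Lemma~\ref{eLg_lem:restriction_to_normal_subgroups}, which computes the $\CN$-kernel of the restriction $\pi\,|\,N$ of an induced representation to a normal subgroup. The crucial point is to recognize $\pi=\mcK(f)$ as a representation induced from $N$. First I would invoke Proposition~\ref{eLg_prop:ideal_induced_from_M}, or rather the underlying inducing-in-stages picture, to realize $\pi=\mcK(f)=\ind_P^G\chi_f$ and to note that, since $\frakn$ is a coabelian ideal containing any Vergne polarization's nilpotent part appropriately, $\pi$ is weakly equivalent to an induced representation $\ind_N^G\sigma$ where $\sigma=\mcK(l)\in\widehat{N}$ and $l=f\,|\,\frakn$. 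The key compatibility is Kirillov theory on the nilradical: restriction of the orbit $\coAd(G)f$ to $\frakn^\ast$ yields the orbit $\coAd(G)l$, by the $\coAd(G)$-equivariance of the projection $r:\frakg^\ast\onto\frakn^\ast$ already noted in the text, so that $r(\coAd(G)f)=\coAd(G)l$.

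Granting that $\pi\approx\ind_N^G\sigma$, Lemma~\ref{eLg_lem:restriction_to_normal_subgroups} directly gives
\[
\ker_{\CN}\pi=k(G\mdot\sigma)=\bigcap_{x\in G}\ker_{\CN}(x\mdot\sigma).
\]
It then remains to identify the orbit $G\mdot\sigma$ in $\widehat{N}$ with the image under $\mcK$ of the coadjoint orbit $\coAd(G)l$. Since the action of $G$ on $\widehat{N}$ by conjugation corresponds, via the Kirillov map $\mcK:\frakn^\ast/\coAd(N)\to\widehat{N}$, to the coadjoint action of $G$ on $\frakn^\ast$, we have $x\mdot\sigma=x\mdot\mcK(l)=\mcK(\coAd(x)l)$ for all $x\in G$. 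Hence $G\mdot\sigma=\{\mcK(h):h\in\coAd(G)l\}$, and therefore
\[
\bigcap_{x\in G}\ker_{\CN}(x\mdot\sigma)=\bigcap_{h\in\coAd(G)l}\ker_{\CN}\mcK(h),
\]
which is exactly the right-hand side of~(\ref{eLg_equ:kernel_of_orbit}).

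The main obstacle I anticipate is justifying the weak equivalence $\pi\,|\,N\approx G\mdot\sigma$ cleanly, i.e.\ verifying that the restriction of the full Kirillov representation $\mcK(f)$ to the normal subgroup $N$ decomposes (up to weak equivalence) as the $G$-orbit of $\sigma=\mcK(l)$. When $\frakn$ happens to contain a polarization at $f$ this is immediate from inducing in stages, but in general one must argue that restricting $\ind_P^G\chi_f$ to $N$ and applying Lemma~\ref{eLg_lem:restriction_to_normal_subgroups} produces precisely the orbit of $\mcK(f\,|\,\frakn)$; this is the content of the cited Theorem~9 in Section~5 of Chapter~1 of~\cite{LepLud}, and I would lean on that Kirillov-theoretic restriction result rather than reprove it. Everything else is the formal translation between coadjoint orbits and conjugation orbits in $\widehat{N}$ under $\mcK$, which is routine.
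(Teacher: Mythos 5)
Your handling of the second equality (via the $G$-equivariance of the Kirillov map of $N$) agrees with the paper, but the core of your argument rests on a claim that is false in general: $\pi=\mcK(f)$ need \emph{not} be weakly equivalent to $\ind_N^G\sigma$. That equivalence does hold when $\frakg_f\subset\frakn$, since then a Pukanszky polarization at $l$ chosen inside $\frakn$ is also a Pukanszky polarization at $f$ and induction in stages gives $\pi=\ind_N^G\sigma$ on the nose; this is precisely the first case of the paper's proof, and there Lemma~\ref{eLg_lem:restriction_to_normal_subgroups} applies as you describe. But when $\frakg_f\not\subset\frakn$ it fails. Take $G$ the $ax+b$-group, $\frakg=\langle e_0,e_1\rangle$ with $[e_0,e_1]=e_1$, $\frakn=\langle e_1\rangle$, and $f=e_0^\ast$. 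Then $l=0$, $\sigma=1_N$, and $\pi$ is a non-trivial character of $G$, whereas $\ind_N^G 1_N$ is weakly equivalent to all of $(G/N)\widehat{\;\;}$; the two have different $\CG$-kernels, so only the weak containment $\pi\ll\ind_N^G\sigma$ holds, not weak equivalence. Consequently Lemma~\ref{eLg_lem:restriction_to_normal_subgroups}, which computes the restriction to $N$ of the \emph{induced} representation $\ind_N^G\sigma$, says nothing directly about $\ker_{\CN}\pi$ in this case; note that the lemma's conclusion $\ker_{\CN}\pi=k(G\mdot\sigma)$ is nevertheless true here, which shows the conclusion can hold while your intermediate claim breaks down.

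You do flag this obstacle yourself, but your fallback --- invoking Theorem~9 of Chapter~1 of~\cite{LepLud} --- is not a repair: that theorem \emph{is} the statement being proved, and the paper's purpose in this lemma is to give a self-contained proof of it. The paper closes the gap differently: it first reduces by induction to the case $\dim\frakg/\frakn=1$ (possible because $\frakn$ is coabelian), and in the remaining case $\frakg_f\not\subset\frakn$ one then has $\frakg=\frakg_f+\frakn$. There a Vergne polarization passing through $\frakn$ yields a Pukanszky polarization $\frakp$ at $f$ such that $\frakp\cap\frakn$ is a Pukanszky polarization at $l$; restriction of functions from $G$ to $N$ gives a unitary isomorphism intertwining $\pi\,|\,N$ with $\sigma$, and $\frakg=\frakg_f+\frakn$ forces $\coAd(G)l=\coAd(N)l$, hence $G\mdot\sigma=\{\sigma\}$, so that $\ker_{\CN}\pi=\ker_{\CN}\sigma=k(G\mdot\sigma)$ directly, with no inducing from $N$ at all. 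An argument of this kind for the non-saturated case is what your proposal is missing.
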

\begin{proof}
The second equality is obvious because the Kirillov map
of $N$ is $G$-equivariant. By induction it suffices to
prove the first equality in the case $\dim\frakg/\frakn=1$.
First we assume $\frakg_f\subset\frakn$. Let us choose a
Pukanszky polarization $\frakp\subset\frakn$ at
$l\in\frakn^\ast$. It is easy to see that
$\frakp\subset\frakg$ is also a Pukanszky polarization
at $f\in\frakg^\ast$. By induction in stages we obtain
$\pi=\ind_P^G\chi_f=\ind_N^G\sigma$ so that
$\ker_{\CG}\pi=k(G\mdot\sigma)$ by Lemma~\ref{eLg_lem:%
restriction}. Next we assume $\frakg_f\not\subset\frakn$.
Using the concept of Vergne polarizations passing through
$\frakn$ we see that there exists a Pukanszky polarization
$\frakp\subset\frakg$ at $f\in\frakg^\ast$ such that
$\frakq=\frakp\cap\frakn$ is a Pukanszky polarization
at $l\in\frakn^\ast$. We point out that the restriction
of functions from~$G$ to~$N$ gives a linear isomorphism
$\mcC_0^{\chi_f}(G)\to\mcC_0^{\chi_l}(N)$ which
extends to a unitary isomorphism $U$ from
$\frakH_{\pi}=L^2_{\chi_f}(G)$ onto $\frakH_{\sigma}=%
L^2_{\chi_l}(N)$. Clearly $U$ intertwines $\pi\,|\,H$
and $\sigma$. On the other hand $\frakg=\frakg_f+\frakn$
implies $\coAd(G)l=\coAd(N)l$ and thus
$G\mdot\sigma=\{\sigma\}$. This proves
$\ker_{\CN}\pi=\ker_{\CN}\sigma=k(G\mdot\sigma)$.
\end{proof}

In the sequel we suppose that $\frakn$ is nilpotent
and coabelian. Note that the orbit $G\mdot\sigma%
\subset\widehat{N}$ is uniquely determined by
Equality~(\ref{eLg_equ:kernel_of_orbit}) because
it is locally closed (open in its closure):
Pukanszky showed in Corollary~1 of~\cite{Puk1}
that $\coAd(G)l$ is locally closed in~$\frakn^\ast$
and Brown proved in~\cite{Brown} that the Kirillov
map of the connected, simply connected, nilpotent
Lie group $N$ is a homeomorphism.\\\\
Our main result is Proposition~\ref{eLg_prop:%
closed_orbit_is_sufficient} which states that the
primitive ideal $\ker_{\CG}\pi$ is $\LG$-determined
if $G\,\mdot\,\sigma$ is closed in~$\widehat{N}$.
This result is a consequence of arguments closely
related to the classification of simple
$\LG$-modules, $G$ an exponential Lie group,
established by Poguntke in~\cite{Pog4}.\\\\
Let $\pi$, $f$, $\sigma$, $l$ be as in
Lemma~\ref{eLg_lem:restriction}. It is easy
to see that $\frakg=\frakg_l+\frakn$ is sufficient
for $G\mdot\sigma$ to be closed in $\widehat{N}$:
Theorem~3.1.4 of~\cite{CorGre} implies that
$\coAd(G)l=\coAd(N)l$ is closed in $\frakn^\ast$
because $N$ acts unipotently on~$\frakn^\ast$. Since
the Kirillov map of~$N$ is a homeomorphism, it follows
that $G\mdot\sigma=\{\sigma\}$ is closed in~$\widehat{N}$.
Alternatively one can resort to the results of Moore and
Rosenberg: It follows from Theorem~1 of~\cite{MooreRos}
that $\widehat{N}$ is a $T_1$-space so that its one-point
subsets are closed. Let us give a third proof of this
fact: Since $\LN$ is symmetric for nilpotent connected
Lie groups $N$ by Satz~2 of~\cite{Pog1}, it follows
$\Prim_{\ast}\LN=\Max\LN$ by~(6) of~\cite{Lep5} so
that points $\{\sigma\}$ are closed in $\widehat{N}$
because $N$ is $\ast$-regular.\\\\
Poguntke proved in~Theorem~7 of~\cite{Pog4} that if
$E$ is a simple $\LG$-module and $N$ is a connected,
coabelian, nilpotent subgroup of $G$, then there exists
a unique orbit $G\mdot\sigma\subset\widehat{N}$ such that
$\Ann_{\LN}(E)=k(G\mdot\sigma)$. More generally, Ludwig
and Molitor-Braun showed in \cite{LudMol} that if $T$
is a topologically irreducible, bounded representation
of $\LG$, then $\ker_{\LN}T=k(G\mdot\sigma)$ for some
$\sigma\in\widehat{N}$.\\\\
We need the following well-known facts about simple
modules and minimal hermitian idempotents. In the
following irreducible means topologically irreducible.

\begin{lem}\label{eLg_lem:associated_simple_modules}
Let $\mcB$ be Banach $\ast$-algebra and $\pi$ an
irreducible $\ast\,$-representation of $\mcB$
in a Hilbert space $\frakH$.
\begin{enumerateroman}
\item Let $\xi\in\frakH$ be non-zero. Then the
subspace $\pi(\mcB)\xi$ is non-zero and dense in
$\frakH$. If $I$ is an ideal of $\mcB$ such that
$I\not\subset\ker\pi$, then $\pi(I)\xi$ is also
non-zero and dense.
\item Suppose that the ideal $I$ of all $f\in\mcB$ such
that $\pi(f)$ has finite rank is non-zero. Then the
$\pi(\mcB)$-invariant subspace $E=\pi(I)\frakH$ generated
by $\{\pi(f)\eta:f\in I,\eta\in\frakH\}$ is a simple
$\mcB$-module such that $\Ann_{\mcB}(E)=\ker_{\mcB}\pi$.
\end{enumerateroman}
\end{lem}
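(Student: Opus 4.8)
The plan is to run everything through the basic dichotomy furnished by topological irreducibility: a closed $\pi(\mcB)$-invariant subspace of $\frakH$ is either $\{0\}$ or $\frakH$. For part (i) I would first record that $\pi$ is non-degenerate, since $\overline{\pi(\mcB)\frakH}$ is a closed invariant subspace which is not $\{0\}$ (otherwise $\pi=0$) and hence equals $\frakH$. The $\ast$-structure then identifies $\{\eta\in\frakH:\pi(b)\eta=0\text{ for all }b\in\mcB\}$ with $\big(\overline{\pi(\mcB)\frakH}\big)^{\perp}=\{0\}$, so $\pi(\mcB)\xi\neq0$ for every $\xi\neq0$; as $\overline{\pi(\mcB)\xi}$ is then a non-zero closed invariant subspace it must be $\frakH$, giving density. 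For an ideal $I\not\subset\ker\pi$ the same scheme works: $\overline{\pi(I)\frakH}$ is invariant (because $I$ is a left ideal) and non-zero, hence equals $\frakH$; and $\{\eta:\pi(a)\eta=0\text{ for all }a\in I\}$ is closed and invariant (here I use that $I$ is a right ideal, so $\pi(a)\pi(b)=\pi(ab)$ with $ab\in I$) and is not $\frakH$, so it is $\{0\}$. Thus $\pi(I)\xi\neq0$ for $\xi\neq0$, and $\overline{\pi(I)\xi}=\frakH$.

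For part (ii) I would begin with the routine observations that $I$ is a two-sided ideal (if $\pi(f)$ has finite rank, so do $\pi(b)\pi(f)=\pi(bf)$ and $\pi(f)\pi(b)=\pi(fb)$) and that $I\neq0$ forces $\pi(I)\neq0$, whence $I\not\subset\ker\pi$; part (i) then shows that $E=\pi(I)\frakH$ is a non-zero, dense, $\pi(\mcB)$-invariant subspace. The real work, and the step I expect to be the main obstacle, is to describe $\pi(I)$ internally. I would appeal to the structure theory of irreducibly acting $\ast$-algebras that contain a non-zero finite-rank operator to produce a minimal hermitian idempotent, i.e.\ an $e=e^{\ast}\in I$ with $\pi(e)$ a rank-one projection onto some line $\mC\xi_0$; the heart of this is a minimality-of-rank argument, choosing $0\neq\pi(f)\in\pi(I)$ of least rank and analysing $\pi(f)\pi(b)\pi(f)\in\pi(I)$ together with $\pi(f)^{\ast}=\pi(f^{\ast})\in\pi(I)$. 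From such an $e$ one computes $\pi(bec^{\ast})=(\pi(b)\xi_0)\otimes\overline{(\pi(c)\xi_0)}$, so that $\pi(I)$ is spanned by the rank-one operators $\zeta\otimes\bar\eta$ with $\zeta,\eta\in E$, and $E=\pi(\mcB)\xi_0$.

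With this description the remaining assertions are immediate. Given a non-zero $\mcB$-submodule $F$ of $E$ and $0\neq v\in F$, density of $E$ provides $\eta\in E$ with $\langle v,\eta\rangle\neq0$; applying the operators $\zeta\otimes\bar\eta\in\pi(I)$ to $v$ produces every $\zeta\in E$, so $E\subseteq\pi(I)v\subseteq\pi(\mcB)v\subseteq F$ and therefore $F=E$, proving that $E$ is simple. Finally $\ker_{\mcB}\pi\subseteq\Ann_{\mcB}(E)$ is trivial, and if $\pi(b)E=0$ then $\pi(b)$ vanishes on the dense subspace $E$ and hence on $\frakH$, so $b\in\ker_{\mcB}\pi$; therefore $\Ann_{\mcB}(E)=\ker_{\mcB}\pi$.
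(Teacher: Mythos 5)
Your part (i) is correct and supplies the details the paper dismisses as obvious. The genuine gap is in part (ii), at the sentence claiming that from $\pi(bec^{\ast})=(\pi(b)\xi_0)\otimes\overline{(\pi(c)\xi_0)}$ it follows that ``$\pi(I)$ is spanned by the rank-one operators $\zeta\otimes\bar\eta$ with $\zeta,\eta\in E$, and $E=\pi(\mcB)\xi_0$.'' That computation yields only the inclusions $\{\zeta\otimes\bar\eta:\zeta,\eta\in\pi(\mcB)\xi_0\}\subset\pi(I)$ and $\pi(\mcB)\xi_0\subset E$. But your simplicity argument consumes the opposite inclusion $E=\pi(I)\frakH\subset\pi(\mcB)\xi_0$: you apply operators $\zeta\otimes\bar\eta$ with $\zeta$ ranging over all of $E$ and with $\eta\in E$ satisfying $\langle v,\eta\rangle\neq 0$, and you need these operators to lie in $\pi(I)$, which you only know when $\zeta,\eta\in\pi(\mcB)\xi_0$. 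This inclusion is not automatic: topological irreducibility makes $\pi(\mcB)\xi_0$ dense, but a Banach $\ast$-algebra has no Kadison transitivity, so you cannot solve $\pi(b)\xi_0=\pi(f)\eta$ exactly inside the image. The hole is closed as follows: for $f\in I$ the subspace $\pi(f)\pi(\mcB)\xi_0=\pi(f\mcB)\xi_0\subset\pi(I)\xi_0$ is dense in $\pi(f)\frakH$ by part (i) and continuity, and it is closed because $\pi(f)\frakH$ is finite-dimensional; hence $\pi(f)\frakH\subset\pi(I)\xi_0$ for every $f\in I$, and summing over $f$ gives $E\subset\pi(I)\xi_0\subset\pi(\mcB)\xi_0$.

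This repair, however, is exactly the paper's entire proof (Dixmier's argument): run at an arbitrary non-zero $\xi\in E$ rather than at the special vector $\xi_0$, it gives $E=\pi(I)\xi$ for every such $\xi$, from which simplicity and $\Ann_{\mcB}(E)=\ker_{\mcB}\pi$ (using density of $E$) follow at once, with no need for the minimal hermitian idempotent, the minimal-rank analysis, or the rank-one calculus. So the one step you passed over as a routine consequence of a computation is where the whole content of the lemma sits. Two lesser points. First, ``$I\neq 0$ forces $\pi(I)\neq 0$'' is false as stated, because $\ker_{\mcB}\pi\subset I$ always; the lemma's hypothesis must be read as $\pi(I)\neq 0$ (the paper tacitly reads it this way as well). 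Second, in a general Banach $\ast$-algebra one cannot arrange $e^2=e$ in $\mcB$, only $\pi(e)^2=\pi(e)=\pi(e)^{\ast}$ (take for $e$ a real polynomial without constant term in $f^{\ast}f$); this is harmless since you only use $\pi(e)$, but it underlines that the structure-theoretic step you invoke is itself only sketched, so the proposal as written rests on two unproved steps, one of which is the crux.
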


\begin{proof}Part \textit{(i)} is obvious. The proof
of \textit{(ii)} follows Dixmier's proof of Th\'eor\`eme~2
in~\cite{Dix2}. Let $\xi\in E$ be non-zero. For every
$f\in I$ the subspace $\pi(f)\pi(\mcB)\xi$ is dense in
$\pi(f)\frakH$ so that $\pi(f)\pi(\mcB)\xi=\pi(f)\frakH$
because $\pi(f)\frakH$ is finite-dimensional. This proves
$\pi(f)\frakH\subset\pi(I)\xi$ for every $f\in I$. Thus
$E=\pi(I)\frakH=\pi(I)\xi$. The rest is obvious.
\end{proof}

A hermitian idempotent $q\in\mcB$ satisfies $q^2=q=q^\ast$.
We say that $q$ is minimal in~$\mcB$ if it is non-zero
and if $q\mcB q=\mC q$.

\begin{lem}\label{eLg_lem:algebras_with_minimal_%
hermitian_idempotents}
Let $\mcB$ be Banach $\ast$-algebra.
\begin{enumerateroman}
\item Let $\pi$ be a faithful irreducible
$\ast\,$-representation of $\mcB$ in a Hilbert space
$\frakH$. Then $q\in\mcB$ is a minimal hermitian
idempotent if and only if $\pi(q)$ is a one-dimensional
orthogonal projection.
\item Assume that there exist minimal hermitian
idempotents in $\mcB$. If $\pi,\rho$ are faithful
irreducible $\ast\,$-representations of $\mcB$, then
$\pi$ and $\rho$ are unitarily equivalent.
\end{enumerateroman}
\end{lem}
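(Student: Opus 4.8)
The plan is to prove both parts by transporting the purely algebraic relation $q\mcB q=\mC q$ through the representations, using the density assertion of Lemma~\ref{eLg_lem:associated_simple_modules}~\textit{(i)} throughout. For part~\textit{(i)}, suppose first that $q$ is a minimal hermitian idempotent. Then $P=\pi(q)$ is an orthogonal projection since $\pi$ is a $\ast$-representation, and $P\neq 0$ because $\pi$ is faithful and $q\neq 0$. Applying $\pi$ to $q\mcB q=\mC q$ gives $P\pi(b)P\in\mC P$ for every $b\in\mcB$. If the range of $P$ contained two orthonormal vectors $\xi,\eta$, then $\langle\pi(b)\xi,\eta\rangle=\langle P\pi(b)P\xi,\eta\rangle$ would be a scalar multiple of $\langle\xi,\eta\rangle=0$ and hence vanish for every $b$, so $\eta$ would be orthogonal to the dense subspace $\pi(\mcB)\xi$, a contradiction; thus $P$ is one-dimensional. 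Conversely, if $\pi(q)$ is a one-dimensional orthogonal projection, then $\pi(q^2)=\pi(q)^2=\pi(q)=\pi(q)^{\ast}=\pi(q^{\ast})$ together with faithfulness forces $q^2=q=q^{\ast}\neq 0$; and since the compression of any bounded operator to the one-dimensional range of $\pi(q)$ is a scalar multiple of $\pi(q)$, we obtain $\pi(q\mcB q)\subset\mC\pi(q)$, whence $q\mcB q\subset\mC q$ by injectivity, with equality because density furnishes a $b$ with $\langle\pi(b)\xi,\xi\rangle\neq 0$ for a unit vector $\xi$ spanning the range.

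For part~\textit{(ii)}, fix a minimal hermitian idempotent $q$ and two faithful irreducible $\ast$-representations $\pi,\rho$ on $\frakH_\pi,\frakH_\rho$. By part~\textit{(i)} both $\pi(q)$ and $\rho(q)$ are one-dimensional orthogonal projections; choose unit vectors $\xi$ and $\eta$ spanning their respective ranges. The decisive point is that the scalar $\tau(c)$ determined by the representation-independent identity $qcq=\tau(c)\,q$ in $\mcB$ is the same for every representation. Using $\pi(q)\xi=\xi$, self-adjointness of $\pi(q)$, and $\langle\xi,\xi\rangle=1$, one computes $\langle\pi(c)\xi,\xi\rangle=\langle\pi(qcq)\xi,\xi\rangle=\tau(c)$, and identically $\langle\rho(c)\eta,\eta\rangle=\tau(c)$.

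Consequently $\langle\pi(b)\xi,\pi(b')\xi\rangle=\langle\pi((b')^{\ast}b)\xi,\xi\rangle=\tau((b')^{\ast}b)=\langle\rho(b)\eta,\rho(b')\eta\rangle$ for all $b,b'\in\mcB$. This single identity shows at once that $U(\pi(b)\xi)=\rho(b)\eta$ is well defined and isometric on the dense subspace $\pi(\mcB)\xi$, hence extends to an isometry $\frakH_\pi\to\frakH_\rho$; its range is closed and contains the dense subspace $\rho(\mcB)\eta$, so $U$ is onto and therefore unitary. Finally $U\pi(a)\pi(b)\xi=\rho(ab)\eta=\rho(a)U\pi(b)\xi$ shows that $U$ intertwines $\pi$ and $\rho$ on a dense subspace and hence everywhere, giving the unitary equivalence. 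I expect the substantive step to be not an obstacle but the recognition that the matrix coefficient $c\mapsto\langle\pi(c)\xi,\xi\rangle$ is governed by the representation-independent functional $\tau$ arising from $q\mcB q=\mC q$; once this is isolated, well-definedness, isometry, surjectivity, and intertwining all fall out of Lemma~\ref{eLg_lem:associated_simple_modules}~\textit{(i)} and routine $\ast$-representation bookkeeping.
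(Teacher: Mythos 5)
Your proof is correct and follows essentially the same route as the paper: identify $\pi(q)$, $\rho(q)$ as one-dimensional projections, and use the representation-independent scalar $\tau(c)$ from $qcq=\tau(c)q$ to show the two vector states $\langle\pi(\cdot)\xi,\xi\rangle$ and $\langle\rho(\cdot)\eta,\eta\rangle$ coincide. The only cosmetic differences are that in part \textit{(i)} you argue directly with two orthonormal vectors where the paper invokes irreducibility of the compression to $\pi(q)\frakH$, and in part \textit{(ii)} you carry out the GNS-uniqueness construction of the intertwining unitary by hand where the paper simply cites Proposition~2.4.1 of Dixmier.
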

\begin{proof}
Clearly $q$ is a hermitian idempotent if and only if
$\pi(q)$ is an orthogonal projection because $\pi$ is
faithful. If $\pi(q)\frakH$ is a one-dimensional, then
$\pi(\mC q)=\mC\pi(q)=\pi(q)\,\pi(\mcB)\,\pi(q)=\pi(q\mcB q)$
and thus $q\mcB q=\mC q$ because $\pi$ is faithful. For
the converse assume $q\mcB q=\mC q$. Since $\pi(q\mcB q)$
and hence $\pi(q)$ acts irreducibly on $\pi(q)\frakH$, it
follows that this subspace is one-dimensional.\\\\
Now we prove \textit{(ii)}. Let $q\in\mcB$ be a minimal
hermitian idempotent and $\pi$,~$\rho$ faithful
irreducible $\ast\,$-representations in Hilbert spaces
$\frakH_{\,\pi}$ and $\frakH_{\,\rho}$. Since $\pi(q)$
and $\rho(q)$ are one-dimensional orthogonal projections
by \textit{(i)}, there exist unit vectors
$\xi\in\frakH_{\,\pi}$ and $\eta\in\frakH_{\,\rho}$ such
that  $\pi(q)=\langle\,-\,,\xi\rangle\,\xi$ and
$\rho(q)=\langle\,-\,,\eta\rangle\,\eta$. Let us
consider the positive linear functionals
$f_{\pi},f_{\rho}$ on $\mcB$ given by
$f_{\pi}(a)=\langle\pi(a)\xi,\xi\rangle$ and
$f_{\rho}(a)=\langle\rho(a)\eta,\eta\rangle.$
Since $q\mcB q$ is one-dimensional and
$f_{\pi}(q)=1=f_{\rho}(q)$, it follows
$f_{\pi}(a)=f_{\pi}(qaq)=f_{\rho}(qaq)=f_{\rho}(a)$
for all $a\in\mcB$, i.e., the positive linear forms of
the cyclic representations $\pi$ and $\rho$ coincide. Now
Proposition~2.4.1 of~\cite{Dix3} shows that $\pi$ and
$\rho$ are unitarily equivalent.
\end{proof}

Poguntke proved in~\cite{Pog3} that for exponential $G$
and $\pi\in\widehat{G}$ there exists some $q\in\LG$
such that $\pi(q)$ is a one-dimensional orthogonal
projection. Note that the canonical image of $q$
in $\LG/\ker_{\LG}\pi$ is a minimal hermitian
idempotent. Part~\textit{(ii)} of Lemma~\ref{eLg_lem:%
algebras_with_minimal_hermitian_idempotents} shows us
that $\ker_{\LG}\pi=\ker_{\LG}\rho$ for
$\pi,\rho\in\widehat{G}$ implies that $\pi$ and $\rho$
are unitarily equivalent. In particular $G$ is a
type I group. Furthermore the natural map
$\Psi:\Prim\Cst(G)\to\Prim_{\ast}\LG$ is injective,
which is necessary for $G$ to be primitive
$\ast\,$-regular by Lemma~\ref{Bsa_lem:%
characterize_regularity}.\\

If $E$ is a simple $\mcB$-module, then there exists a
complete norm on $E$ such that $|a\mdot\xi|\le|a|\,|\xi|$
for $a\in\mcB$ and $\xi\in E$: Recall that $E$ is
algebraically isomorphic to $\mcB/L$ for some maximal
modular left ideal $L$ which is closed in the Banach
algebra $\mcB$. The quotient norm of $E\cong\mcB/L$ has
the desired property. In particular we see that primitive
ideals $P=\Ann_{\mcB}(E)$ are closed. Furthermore primitive
ideals are prime. Hence the set $\Prim\mcB$ of all
primitive ideals of $\mcB$ can be endowed with the
Jacobson (hull-kernel) topology.\\

In the sequel we work with hermitian idempotents in the
adjoint algebra $\mcB^b$ of $\mcB$, compare~\cite{Lep1}, which
is also known as the multiplier or double centralizer
algebra of $\mcB$.
\ifthenelse{\boolean{details}}{ }
{For a proof of the following proposition we refer the
reader to Theorem~1 of~\cite{Pog4}.}

\begin{prop}\label{eLg_prop:corners}
Let $\mcB$ be a Banach $\ast\,$-algebra and
$q\in\mcB^b$ a hermitian idempotent.
\begin{enumerate}
\item $q\mcB q$ is a closed $\ast$-subalgebra of $\mcB$.
\item If $E$ is a simple $\mcB$-module, then there exists
a unique (simple) $\mcB^b$-module structure on $E$ such
that $M\mdot(a\mdot\xi)=(Ma)\mdot\xi$ for all
$M\in\mcB^b$, $a\in\mcB$, and $\xi\in E$.
\item If $E$ is a simple $\mcB$-module such that
$q\mdot E\neq 0$, then $q\mdot E$ is a simple
$q\mcB q$-module with annihilator
$\Ann_{q\mcB q}(q\mdot E)= q\mcB q\cap\Ann_{\mcB}(E)%
=q\Ann_{\mcB}(E)q$.
\item The assignment $[E]\mapsto [q\mdot E]$ gives a
bijection from the set of isomorphism classes of
simple $\mcB$-modules $E$ such that $q\mdot E\neq 0$
onto the set of isomorphism classes of simple
$q\mcB q$-modules.
\item Further $P\mapsto q\mcB q\cap P$ is a homeomorphism
from the open subset $\Prim\mcB\setminus h(\mcB q\mcB)$
onto $\Prim(q\mcB q)$ \wrt\ the Jacobson topology.
\end{enumerate}
\end{prop}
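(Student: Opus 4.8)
The plan is to prove Proposition~\ref{eLg_prop:corners} by treating the five assertions in order, since each relies on the previous ones and all of them are standard facts about the corner algebra $q\mcB q$ associated with a hermitian idempotent $q$ in the multiplier algebra $\mcB^b$. First I would establish \textit{(1)}: the idempotent property $q^2=q=q^\ast$ makes the map $a\mapsto qaq$ an idempotent linear contraction on $\mcB$, so its image $q\mcB q$ is closed; that the image is a $\ast$-subalgebra follows from $(qaq)(qbq)=q(aqb)q$ and $(qaq)^\ast=qa^\ast q$, using that $\mcB^b$ acts by involution-preserving multipliers on $\mcB$. For \textit{(2)}, given a simple $\mcB$-module $E$, I would extend the action by $M\mdot(a\mdot\xi)=(Ma)\mdot\xi$ and check this is well-defined: if $a\mdot\xi=0$ one must show $(Ma)\mdot\xi=0$, which follows because $\Ann_{\mcB}(E)$ is a \emph{closed two-sided} ideal of $\mcB$, hence invariant under left multiplication by multipliers $M\in\mcB^b$, so that $M$ maps the defining left ideal $L$ into itself and descends to $E\cong\mcB/L$. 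Simplicity and uniqueness of the extension are then immediate since $\mcB\subset\mcB^b$ acts densely.

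Next, for \textit{(3)} I would assume $q\mdot E\neq 0$ and verify that $q\mdot E$ is a simple $q\mcB q$-module. Here the key observation, supplied by part~\textit{(i)} of Lemma~\ref{eLg_lem:associated_simple_modules}, is that for a non-zero $\xi\in E$ the subspace $\mcB\mdot\xi$ is all of $E$ (up to the module-theoretic analogue of density), so that $q\mcB q\mdot(q\mdot\xi)=q\mcB\mdot\xi=q\mdot E$ for any non-zero $q\mdot\xi$, giving simplicity. The annihilator computation $\Ann_{q\mcB q}(q\mdot E)=q\mcB q\cap\Ann_{\mcB}(E)=q\Ann_{\mcB}(E)q$ is the delicate point: the inclusion $q\Ann_{\mcB}(E)q\subset\Ann_{q\mcB q}(q\mdot E)$ is clear, and for the reverse one shows that if $qaq$ kills $q\mdot E$ then $qaq$ kills all of $E$ via the corner, using $q\mcB q\cap\Ann_{\mcB}(E)=q\Ann_{\mcB}(E)q$, which itself follows from $q(qaq)q=qaq$ together with the idempotency of the compression map.

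For \textit{(4)} I would exhibit the inverse of $[E]\mapsto[q\mdot E]$ by an induction/extension construction: given a simple $q\mcB q$-module $F$, form $\mcB q\otimes_{q\mcB q}F$ and show it contains a unique simple $\mcB$-submodule $E$ with $q\mdot E\cong F$; bijectivity then reduces to checking the two composites are the identity on isomorphism classes, which is routine once \textit{(3)} is in hand. Finally \textit{(5)} is the topological upgrade: the annihilator formula from \textit{(3)} shows $P\mapsto q\mcB q\cap P$ is well-defined and bijective from $\Prim\mcB\setminus h(\mcB q\mcB)$—the primitive ideals \emph{not} containing the ideal $\mcB q\mcB$, equivalently those $P=\Ann_{\mcB}(E)$ with $q\mdot E\neq 0$—onto $\Prim(q\mcB q)$, and I would verify it is a homeomorphism by checking that hulls correspond under the bijection, i.e.\ that $k$-$h$ closures are preserved in both directions. \textbf{The main obstacle} I anticipate is the annihilator identity in \textit{(3)}, specifically the equality $q\mcB q\cap\Ann_{\mcB}(E)=q\Ann_{\mcB}(E)q$: proving the non-obvious inclusion requires exploiting that $\Ann_{\mcB}(E)$ is a two-sided ideal stable under the multiplier $q$, so that $qaq\in\Ann_{\mcB}(E)$ forces $qaq=q(qaq)q\in q\Ann_{\mcB}(E)q$—a point that must be handled carefully because $q$ lives in $\mcB^b$ rather than $\mcB$ itself, and the non-unitality of $\mcB$ means one cannot simply multiply by a unit.
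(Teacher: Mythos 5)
Your outline follows the same route as the paper's proof (which is Poguntke's corner theory): the projection $a\mapsto qaq$ for (1), extending the module structure to $\mcB^b$ for (2), the identity $(q\mcB q)\mdot\xi=q\mcB\mdot\xi=q\mdot E$ for (3), an induced-module construction for (4), and the annihilator correspondence for (5). But two of your steps are genuinely flawed as written. In (2), your well-definedness argument is a non sequitur: that $\Ann_{\mcB}(E)$ is a two-sided ideal stable under multipliers is true but irrelevant, because well-definedness concerns the left ideal $L=\{a\in\mcB:a\mdot\xi=0\}$ attached to a single vector $\xi$, not the annihilator of the whole module, and stability of the latter does not give $ML\subset L$. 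The correct argument (the paper's) uses simplicity directly: if $a\mdot\xi=0$, then $\mcB\mdot((Ma)\mdot\xi)=(\mcB M)\mdot(a\mdot\xi)=0$ since $\mcB M\subset\mcB$ ($\mcB$ is an ideal of $\mcB^b$), and in a simple module the set of vectors annihilated by all of $\mcB$ is a proper submodule, hence zero, so $(Ma)\mdot\xi=0$.

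In (4), your surjectivity construction points in the wrong direction: you propose that $\mcB q\otimes_{q\mcB q}F$ \emph{contains} a unique simple submodule with $q$-part $F$. In general there is no reason for an induced module of this kind to possess any simple submodule at all; what is true, and what the paper proves, is that the desired simple module is a \emph{quotient}: setting $E_0=\mcB q\otimes_{q\mcB^b q}F$, one chooses (Zorn) a maximal $\mcB$-submodule $U$ with $U\cap q\mdot E_0=\{0\}$ (equivalently, $q\mdot U=0$) and shows that $E=E_0/U$ is simple with $q\mdot E\cong F$. Relatedly, in (5) your plan to "check that hulls correspond" glosses over the one nontrivial ingredient, namely the paper's preliminary remark that for $P\in\Prim\mcB\setminus h(\mcB q\mcB)$ and any ideal $I$ of $\mcB$ one has $I\subset P$ if and only if $qIq\subset qPq$; injectivity, continuity and closedness of $P\mapsto q\mcB q\cap P$ all hinge on this, and it needs an argument (choose a simple $E$ with $P=\Ann_{\mcB}(E)$ and $q\mdot E\neq 0$, and observe that $q\mcB a\mcB q\mdot E=q\mdot E\neq 0$ for $a\in I\setminus P$). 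Your parts (1) and (3) are sound and agree with the paper.
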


\ifthenelse{\boolean{details}}
{\begin{proof} A proof of parts \textit{1.} to \textit{4.}
of this proposition can also be found in~\cite{Pog4}.
\begin{enumerate}
\item[\textit{1.}] Clearly $q\mcB q$ is a
$\ast$-subalgebra of $\mcB$ because $q$ is hermitian.
The map $a\mapsto qaq$ is a continuous, linear
projection. Its image $q\mcB q$ is closed.
\item[\textit{2.}] Recall that $\mcB$ is an ideal
of $\mcB^b$. Let $E$ be a simple $\mcB$-module. If
$a\mdot\xi=0$, then $\mcB\mdot(Ma)\mdot\xi=(\mcB M)%
\mdot(a\mdot\xi)=0$ which implies $(Ma)\mdot\xi=0$.
Thus $M\mdot(a\mdot\xi)=(Ma)\mdot\xi$ defines a
$\mcB^b$-module structure on $E$.  The rest is obvious.
\item[\textit{3.}] Let $E$ be a simple $\mcB$-module.
Clearly $q\mdot E$ is a $q\mcB q$-module. If
$0\neq\xi\in q\mdot E$, then $(q\mcB q)\mdot\xi=%
q\mcB\mdot\xi=q\mdot E$. Thus $q\mdot E$ is simple.
The equality for its annihilator is clear.
\item[\textit{4.}] Since any simple $\mcB$-module
is isomorphic to one of the form $\mcB/L$, $L$ a
maximal left ideal of $\mcB$, the isomorphism
classes of simple $\mcB$-modules form a set.
Note that any $\mcB$-linear map is also
$\mcB^b$-linear.\\\\
The map $\alpha([E])=[q\mdot E]$ is well-defined because
any $\mcB$-linear isomorphism~$\varphi$ from $E_1$ onto
$E_2$ restricts to a $q\mcB q$-linear isomorphism
$\varphi'$ from $q\mdot E_1$ onto $q\mdot E_2$. Further
$\alpha$ is injective because any $q\mcB q$-linear
isomorphism $\varphi':q\mdot E_1\to q\mdot E_2$ extends
to a $\mcB$-linear isomorphism $\varphi:E_1\to E_2$: To
see this, choose a non-zero $\xi\in q\mdot E$ and
define $\varphi(a\mdot\xi)=a\mdot\varphi'(\xi)$.
Finally, it remains to verify that $\alpha$ is
surjective: Let $E'$ be a simple $q\mcB q$-module.
Since $q\mcB^b q\subset (q\mcB q)^b$, we can define
$E_0=\mcB\otimes_{q\mcB^b q}E'=\mcB q\otimes_{q\mcB^b q}E'$.
Observe that $q\mdot E_0=q\otimes_{q\mcB^b q}E'\cong E'$.
By Zorn's Lemma there exists a maximal $\mcB$-invariant
subspace $U$ of $E_0$ such that $U\cap q\mdot E_0=\{0\}$.
Put $E=E_0/U$. Clearly $q\mdot E\cong E'$. We claim that
$E$ is simple: If $\eta\not\in U$, then the
$\mcB$-invariant subspace $\tilde{U}=\mcB\mdot\eta+U$
satisfies $\tilde{U}\cap q\mdot E_0\neq\{0\}$ and hence
$q\mdot\tilde{U}\neq 0$. This implies
$q\mcB q\mdot\tilde{U}=q\otimes_{q\mcB^b q}E'$ and
$\mcB q\mcB q\mdot\tilde{U}=\mcB\otimes_{q\mcB^b q}E'=E_0$.
\item[\textit{5.}] Part~\textit{3.}\ implies
$\beta(P)=q\mcB q\cap P\in \Prim(q\mcB q)$ for all
$P\in\Prim(\mcB)\setminus h(\mcB q\mcB)$. It follows
from~\textit{4.}\ that any simple $q\mcB q$-module is
isomorphic to one of the form $q\mdot E$, $E$ a simple
$\mcB$-module. Hence $\beta$ is surjective. We will
resort to the following preliminary remark: If
$P\in\Prim(q\mcB q)$ and $I$ is an ideal of $\mcB$,
then $I\subset P$ if and only if $qIq\subset qPq$. The
only-if-part is obvious. Suppose $I\not\subset P$. Choose
a simple $\mcB$-module $E$ such that $q\mdot E\neq 0$
and $P=\Ann_{\mcB}(E)$. If $a\in I$ and $a\not\in P$, then
$q\mcB a\mcB q\subset qIq$ and $q\mcB a\mcB q\mdot E=%
q\mdot E\neq 0$ which proves $qIq\not\subset qPq$.
In particular the preceding remark shows that $\beta$
is injective. Furthermore it is easy to see that
$\beta$ is continuous: If $A'\subset\Prim(q\mcB q)$
is closed and $P\in\beta^{-1}(A')\clos$, then
$P\supset\cap\{Q:qQq\in A'\}$ and hence
$qPq\supset Q'$ for all $Q'\in A$. This shows
$qPq\in\overline{A'}=A'$ and thus $P\in\beta^{-1}(A')$.
Finally we prove that $\beta$ is a closed map: Suppose
that $A$ is a closed subset and $P$ an element of
$\Prim\mcB\setminus h(\mcB q\mcB)$ such that
$qPq\in\beta(A)\clos$ which means
$qPq\supset\cap\{qQq:Q\in A\}\supset qk(A)q$. Now
the preliminary remark implies $P\supset k(A)$ and
thus $P\in\overline{A}=A$ and $\beta(P)\in\beta(A)$.
This finishes our proof.
\end{enumerate}
\end{proof}}

Subalgebras of the form $q\mcB q$ for hermitian
idempotents $q\in\mcB^b$ are called corners.\\

The representation theory of exponential Lie groups
is dominated by the fact that certain subquotients
$q\ast(L^1(G)/I)\ast q$ of the group algebra turn
out to be isomorphic to (twisted) weighted
convolution algebras on abelian groups.\\\\
In this context the smooth terminology of twisted
covariance algebras ($L^1$-version) is profitable,
compare~\cite{Green1}, \cite{Rief1}. By definition
a twisted covariance system $(G,\mcA,\tau)$ consists
of (1) a locally compact group $G$ acting strongly
continuously on a Banach $\ast$-algebra~$\mcA$ by
isometric $\ast$-isomorphisms and (2) a twist $\tau$
defined on a closed normal subgroup $H$ of $G$
(i.e.\ a strongly continuous group homomorphism of
$H$ into the group of unitaries of the adjoint algebra
$\mcA^b$ of $\mcA$) such that $\tau(h^x)=\tau(h)^x$
and $a^h=\tau(h)^\ast a\,\tau(h)$ for all $x\in G$,
$h\in H$, and $a\in\mcA$. Let $\mcC_0(G,\mcA,\tau)$
denote the space of all continuous functions
$f:G\to\mcA$ such that $f(xh)=\tau(h)^\ast f(x)$ for
all $x\in G$, $h\in H$ and such that $f$ has compact
support modulo $H$. The closure $L^1(G,\mcA,\tau)$
of $\mcC_0(G,\mcA,\tau)$ with respect to the norm
$|f|_1=\int_{G/H}|f(x)|\,d\dot{x}$ is a Banach
$\ast$-algebra with convolution and involution
given by
\[(f\ast g)(x)=\int_{G/H}f(xy)^{y^{-1}}g(y^{-1})\;d\dot{y}
\quad,\quad f^\ast(x)=\Delta_{G/H}(x^{-1})\,
(f(x^{-1})^\ast)^x\;.\]
A covariance pair $(\pi,\gamma)$ is a unitary
representation $\pi$ of $G$ and a
$\ast$-representation~$\gamma$ of~$\mcA$ in the
same Hilbert space $\frakH$ such that
$\gamma(a^x)=\pi(x)^\ast\gamma(a)\pi(x)$ and
$\gamma(\tau(h))=\pi(h)$. It is well-known that
covariance pairs $(\pi,\gamma)$ correspond to
$\ast$-representations of the twisted
covariance algebra $L^1(G,\mcA,\tau)$.

\begin{defn}\label{eLg_defn:compatible_subspaces}
Let $(G,\mcA,\tau)$ be a twisted covariance system.
A family $\{\mcA_x:x\in G\}$ of closed subspaces
of $\mcA$ is said to be compatible with
$(G,\mcA,\tau)$ if $\tau(h)^\ast \mcA_x=\mcA_{xh}$,
$(\mcA_{xy})^{y^{-1}}\mcA_{y^{-1}}\subset\mcA_x$,
and $((\mcA_{x^{-1}})^\ast)^x=\mcA_x$ for all
$x,y\in G$ and $h\in H$.
\end{defn}

If $\{\mcA_x:x\in G\}$ is compatible, then
$\mcC_0(G,\mcA_x,\tau)=\{f\in\mcC_0(G,\mcA,\tau):%
f(x)\in\mcA_x\}$ defines a subalgebra of
$\mcC_0(G,\mcA,\tau)$. This bears a meaning only
if the $\mcA_x$ are chosen continuously so that
$\mcC_0(G,\mcA_x,\tau)$ and hence its closure
$L^1(G,\mcA_x,\tau)$ are non-zero. One might
think of $\{\mcA_x:x\in G\}$ as a 'bundle'
over $G$ and ask for trivializations.

\begin{defn}\label{eLg_defn:trivialization}
Let $(G,\mcA,\tau)$ be a twisted covariance system
and $\{\mcA_x:x\in G\}$ a compatible family of
one-dimensional subspaces of $\mcA$. We say that a
continuous function $v:G\to\mcA$ is a trivialization
for $\{\mcA_x:x\in G\}$ if $v(x)\in\mcA_x$, $|v(x)|\ge 1$,
$v(xh)=\tau(h)^\ast v(x)$, $v(xy)^{y^{-1}}v(y^{-1})=v(x)$,
and $(v(x^{-1})^\ast)^x=v(x)$ for $x,y\in G$, $h\in H$.
\end{defn}

\begin{prop}\label{eLg_prop:subalgebra_is_Beurling}
Let $(G,\mcA,\tau)$ be a twisted covariance system.
If $v$ is a trivialization for the compatible family
$\{\mcA_x:x\in G\}$ of one-dimensional subspaces
of $\mcA$, then the subalgebra $L^1(G,\mcA_x,\tau)$
is isomorphic to the Beurling algebra $L^1(G/H,w)$
given by the symmetric weight function
$w(\dot{x})=|v(x)|$.
\end{prop}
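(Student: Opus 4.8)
The plan is to use the one-dimensionality of the fibres $\mcA_x$ to turn each $f$ into a scalar density against the trivialization $v$. Since $|v(x)|\ge 1$, the element $v(x)$ spans $\mcA_x$, so every $f\in\mcC_0(G,\mcA,\tau)$ with $f(x)\in\mcA_x$ can be written uniquely as $f(x)=\phi(x)\,v(x)$ with a continuous scalar function $\phi\colon G\to\mC$. First I would check that $\phi$ descends to $G/H$: comparing $f(xh)=\tau(h)^\ast f(x)$ with $v(xh)=\tau(h)^\ast v(x)$ forces $\phi(xh)=\phi(x)$, and compact support of $f$ modulo $H$ becomes compact support of $\phi$ on $G/H$. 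Thus $\Phi\colon f\mapsto\phi$ is a linear bijection from $\mcC_0(G,\mcA_x,\tau)$ onto $\Cc{G/H}$. Before matching the algebraic structure I would record that $w(\dot x)=|v(x)|$ is well defined on $G/H$, because $\tau(h)$ is unitary in $\mcA^b$ and hence $|v(xh)|=|\tau(h)^\ast v(x)|=|v(x)|$; with $|v(x)|\ge 1$ this makes $w$ a genuine weight, and the identity $|f|_1=\int_{G/H}|\phi(\dot x)|\,w(\dot x)\,d\dot x=|\phi|_{1,w}$ shows that $\Phi$ is isometric.

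The heart of the argument is that the defining relations of Definition~\ref{eLg_defn:trivialization} translate exactly into the convolution and involution of the group algebra of $G/H$. For the product I would insert $f=\phi\,v$ and $g=\psi\,v$ into the convolution formula; since the automorphism $(\cdot)^{y^{-1}}$ is $\mC$-linear, the scalars pull out and
\[(f\ast g)(x)=\int_{G/H}\phi(xy)\,\psi(y^{-1})\;v(xy)^{y^{-1}}v(y^{-1})\;d\dot y .\]
The cocycle relation $v(xy)^{y^{-1}}v(y^{-1})=v(x)$ collapses the vector factor to $v(x)$, leaving $(f\ast g)(x)=(\phi\ast\psi)(\dot x)\,v(x)$, where $\phi\ast\psi$ is convolution on $G/H$. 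Hence $\Phi(f\ast g)=\Phi(f)\ast\Phi(g)$.

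For the involution I would use that $\ast$ is conjugate-linear together with the relation $(v(x^{-1})^\ast)^x=v(x)$ to compute $f^\ast(x)=\Delta_{G/H}(x^{-1})\,\overline{\phi(\dot x^{-1})}\,v(x)$, so that $\Phi(f^\ast)=\Phi(f)^\ast$ with the standard involution of $L^1(G/H)$. The same relation yields the symmetry of $w$: applying $\ast$ gives $v(x)^\ast=v(x^{-1})^x$, whence $|v(x)^\ast|=|v(x^{-1})|$, and using that both the involution and the $G$-action are isometric we obtain $w(\dot x)=|v(x)|=|v(x^{-1})|=w(\dot x^{-1})$. Finally, $\Phi$ is an isometric $\ast$-isomorphism between the dense subalgebras $\mcC_0(G,\mcA_x,\tau)$ and $\Cc{G/H}$, so it extends to an isometric $\ast$-isomorphism of the completions $L^1(G,\mcA_x,\tau)$ and $L^1(G/H,w)$; in particular submultiplicativity of the Beurling norm is inherited from the Banach algebra $L^1(G,\mcA_x,\tau)$.

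The computations in the convolution and involution formulas are routine bookkeeping once the scalar/vector splitting $f=\phi\,v$ is in place; the genuinely delicate point is ensuring that each defining identity is used in the right place — the covariance relation for descent to $G/H$, the cocycle relation for the convolution, and the $\ast$-relation for \emph{both} the involution and the symmetry of $w$. The step I expect to require the most care is the symmetry $w(\dot x)=w(\dot x^{-1})$, since it rests on the involution of $\mcA$ being isometric; in the intended application $\mcA$ is a (twisted, weighted) $L^1$-type algebra, where this holds, and I would make that hypothesis explicit.
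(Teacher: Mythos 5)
Your proof is correct and takes essentially the same approach as the paper: the paper's entire proof is the one-line observation that $\Phi(b)(x)=b(\dot{x})\,v(x)$ defines an isometric isomorphism from $L^1(G/H,w)$ onto $L^1(G,\mcA_x,\tau)$, and your map $f\mapsto\phi$ is precisely the inverse of this $\Phi$, with the routine verifications (descent to $G/H$, cocycle relation for convolution, $\ast$-relation for the involution and symmetry of $w$) written out in full.
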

\begin{proof}
One checks easily that $\Phi(b)(x)=b(\dot{x})v(x)$
defines an isometric isomorphism from $L^1(G/H,w)$
onto $L^1(G,\mcA_x,\tau)$.
\end{proof}

Let $q\in\mcA\subset L^1(G,\mcA,\tau)^b$ be a hermitian
idempotent. Since $(q\ast f\ast q)(x)=q^xf(x)q$ for
all $f\in L^1(G,\mcA,\tau)$, it follows
$q\ast L^1(G,\mcA,\tau)\ast q=L^1(G,q^x\!\mcA\,q,\tau)$.
In Theorem~\ref{eLg_thm:compute_subquotients} we
treat the case where $q$ is minimal and the
$q^x\!\mcA\,q$ are one-dimensional.\\

The following theorem is due to Poguntke, see part~(4)
and~(5) of the proof of the main theorem in~\cite{Pog3}.
The idea goes back to Theorem~5 of Leptin and Poguntke
in~\cite{Lep4}.

\begin{thm}\label{eLg_thm:compute_subquotients}
Let $(\pi,\gamma)$ be a covariance pair of the twisted
covariance system $(G,\mcA,\tau)$ such that $\gamma$ is
irreducible and faithful. Suppose that there exists a
minimal hermitian idempotent $q\in\mcA$. Then the corner
$q\ast L^1(G,\mcA,\tau)\ast q=L^1(G,q^x\!\mcA\,q,\tau)$
is isometrically isomorphic to a weighted Beurling
algebra $L^1(G/H,w)$ where $w$ is a symmetric weight
function on $G/H$.
\end{thm}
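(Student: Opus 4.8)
The plan is to exhibit the corner as the subalgebra $L^1(G,\mcA_x,\tau)$ attached to the family $\mcA_x=q^x\mcA q$ and to construct an explicit trivialization for it, so that Proposition~\ref{eLg_prop:subalgebra_is_Beurling} yields the assertion. The identity $q\ast L^1(G,\mcA,\tau)\ast q=L^1(G,q^x\mcA q,\tau)$ recorded just before the theorem lets me set $\mcA_x=q^x\mcA q$. Since $\gamma$ is faithful and irreducible and $q$ is a minimal hermitian idempotent, part~\textit{(i)} of Lemma~\ref{eLg_lem:algebras_with_minimal_hermitian_idempotents} says that $\gamma(q)$ is the orthogonal projection onto $\mC\xi$ for some unit vector $\xi\in\frakH$. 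The covariance relation $\gamma(a^x)=\pi(x)^\ast\gamma(a)\pi(x)$ then shows that $\gamma(q^x)$ is the projection onto $\mC\xi_x$, where $\xi_x=\pi(x)^\ast\xi$ is again a unit vector. As $\gamma(\mcA)$ acts irreducibly, $\gamma(\mcA_x)=\gamma(q^x)\,\gamma(\mcA)\,\gamma(q)$ lies in the one-dimensional space of operators that carry $\mC\xi$ into $\mC\xi_x$ and annihilate $\xi^\bot$, and it is nonzero because $\gamma(\mcA)\xi$ is dense in $\frakH$; faithfulness of $\gamma$ therefore gives $\dim\mcA_x=1$.

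Next I would verify that $\{\mcA_x:x\in G\}$ is compatible in the sense of Definition~\ref{eLg_defn:compatible_subspaces}. Each of the three relations reduces to a short manipulation using that $G$ acts by $\ast$-automorphisms with $\mcA^x=\mcA$, that $\tau(h)$ is a unitary multiplier with $\tau(h)\mcA=\mcA$ and $q^h=\tau(h)^\ast q\,\tau(h)$, and that $(q^{x^{-1}})^x=q$. Concretely, $\mcA_{xh}=q^{xh}\mcA q=\tau(h)^\ast q^x\tau(h)\mcA q=\tau(h)^\ast\mcA_x$; next $(\mcA_{xy})^{y^{-1}}\mcA_{y^{-1}}=q^x\mcA q^{y^{-1}}\mcA q\subset q^x\mcA q=\mcA_x$, using $(q^{xy})^{y^{-1}}=q^x$ and that $q^{y^{-1}}$ is idempotent; and $((\mcA_{x^{-1}})^\ast)^x=(q\mcA q^{x^{-1}})^x=q^x\mcA q=\mcA_x$.

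The heart of the matter is the trivialization $v$. I would define $v(x)$ to be the unique element of the one-dimensional space $\mcA_x$ with $\gamma(v(x))$ equal to the rank-one operator $\eta\mapsto\langle\eta,\xi\rangle\,\xi_x$; uniqueness is forced by faithfulness of $\gamma$. The four axioms of Definition~\ref{eLg_defn:trivialization} are then checked by applying $\gamma$ and invoking $\gamma(a^x)=\pi(x)^\ast\gamma(a)\pi(x)$, $\gamma(\tau(h))=\pi(h)$, and unitarity of $\pi$. For the cocycle relation $v(xy)^{y^{-1}}v(y^{-1})=v(x)$ one computes that $\gamma(v(xy)^{y^{-1}})$ is $\eta\mapsto\langle\eta,\pi(y)\xi\rangle\,\xi_x$ and $\gamma(v(y^{-1}))$ is $\eta\mapsto\langle\eta,\xi\rangle\,\pi(y)\xi$, so their composition telescopes through the normalization $\|\pi(y)\xi\|=1$ back to $\eta\mapsto\langle\eta,\xi\rangle\,\xi_x=\gamma(v(x))$; the relations $v(xh)=\tau(h)^\ast v(x)$ and $(v(x^{-1})^\ast)^x=v(x)$ reduce similarly to unit-vector bookkeeping. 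Finally $|v(x)|\ge\|\gamma(v(x))\|=\|\xi_x\|\,\|\xi\|=1$ since $\gamma$ is a $\ast$-representation.

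The one delicate point, and the step I expect to be the main obstacle, is the continuity of $x\mapsto v(x)$, since faithfulness of $\gamma$ does not make its inverse continuous. Here I would argue locally. Fixing $x_0$, density of $\gamma(\mcA)\xi$ lets me choose $a\in\mcA$ with $\langle\pi(x_0)\gamma(a)\xi,\xi\rangle\neq 0$; then strong continuity of the $G$-action makes $x\mapsto q^x a\,q$ norm-continuous, while $\gamma(q^x a\,q)=\langle\pi(x)\gamma(a)\xi,\xi\rangle\,\gamma(v(x))$. Dividing by this continuous, nonvanishing scalar on a neighbourhood of $x_0$ produces a continuous local section that, by uniqueness, agrees with $v$; hence $v$ is continuous on all of $G$. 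With $v$ a genuine trivialization, Proposition~\ref{eLg_prop:subalgebra_is_Beurling} identifies $q\ast L^1(G,\mcA,\tau)\ast q=L^1(G,\mcA_x,\tau)$ isometrically with the Beurling algebra $L^1(G/H,w)$ for the symmetric weight $w(\dot x)=|v(x)|$, which is exactly the claim.
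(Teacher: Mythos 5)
Your proof is correct and takes essentially the same route as the paper's: both construct the trivialization $v(x)$ determined by $\gamma(v(x))\eta=\langle\eta,\xi\rangle\,\pi(x)^{\ast}\xi$, verify the axioms of Definition~\ref{eLg_defn:trivialization} by pushing everything through the faithful representation $\gamma$, and conclude via Proposition~\ref{eLg_prop:subalgebra_is_Beurling}. The only difference is one of detail: the paper dismisses the continuity of $v$ with a ``clearly,'' while your local-section argument $v(x)=\langle\pi(x)\gamma(a)\xi,\xi\rangle^{-1}\,q^{x}a\,q$ (together with the explicit compatibility check) supplies the justification the paper omits.
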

\begin{proof}
By Lemma~\ref{eLg_lem:algebras_with_minimal_hermitian_%
idempotents}.\textit{(i)} there exists a unit vector
$\lambda\in\frakH$ such that $\gamma(q)\xi=%
\langle\xi,\lambda\rangle\lambda$. Now $\gamma(q^xaq)=%
\pi(x)^\ast\gamma(q)\pi(x)\gamma(a)\gamma(q)$ implies
\begin{equation*}\label{eLg_equ:evaluate_gamma}
\gamma(q^xaq)\xi=\langle\pi(x)\gamma(a)\lambda,\lambda
\rangle\;\langle\xi,\lambda\rangle\;\pi(x)^{-1}\lambda\;.
\end{equation*}
For every $x\in G$ there exists some $a\in\mcA$ such that
$\langle\pi(x)\gamma(a)\lambda,\lambda\rangle$ is
non-zero because $\gamma$ is irreducible. This shows
$\gamma(q^x\mcA q)=\mC\,\pi(x)^{-1}\gamma(q)$
so that $q^x\mcA q$ is one-dimensional. There
is a unique element $v(x)\in\mcA$ such that
$\gamma(v(x))\xi=\langle\xi,\lambda\rangle%
\pi(x)^{-1}\lambda$. Clearly $v:G\to\mcA$ is
continuous and $|v(x)|\ge |\gamma(v(x))|=1$.
Further one computes
\begin{align*}
\gamma(v(xh))&=\pi(h)^\ast\gamma(v(x))=
\tau(h)^\ast\gamma(v(x))\\
\gamma(v(x))&=\gamma\left(v(xy)^{y^{-1}}v(y^{-1})\right)\\
\gamma(v(x))&=\gamma((v(x^{-1})^\ast )^x)
\end{align*}
which proves that $v$ is a trivialization for
$\{q^x\!\mcA\,q:x\in G\}$ because $\gamma$ is faithful.
Now Proposition~\ref{eLg_prop:subalgebra_is_Beurling}
gives the desired result.
\end{proof}

Our aim is to apply Theorem~\ref{eLg_thm:compute_%
subquotients} to certain quotients of group algebras:
Let $H$ be a closed normal subgroup of $G$. It is
known that $\LG$ is isomorphic to the twisted
covariance algebra $L^1(G,\LH,\tau)$ with
$G$-action $a^x\,(h)=\delta_H(x^{-1})\,a(h^{x^{-1}})$
and twist $\tau(k)a\,(h)=a(k^{-1}h)$, compare the
corollary to Proposition~1 in~\cite{Green1}. Suppose
that $\gamma$ is an irreducible representation of $H$.
The crucial assumption in Theorem~\ref{eLg_thm:%
compute_subquotients} is that $\gamma$ can be completed
to a covariance pair $(\pi,\gamma)$ of $(G,\LH,\tau)$,
or equivalently, that it can be extended to a
representation $\pi$ of $G$, which is only possible if
$x\mdot\gamma$ is unitarily equivalent to $\gamma$ for
all $x\in G$. If such a $\pi$ exists, then the ideal
$I'=\ker_{\LH}\gamma$ is $G$- and $\tau$-invariant so
that the covariance algebra $L^1(G,\LH/I',\dot{\tau})$
with induced $G$-action and twist is well-defined. This
algebra is isomorphic to the quotient $\LG/I$ where
$I=\ind_H^G(I')$. To apply Theorem~\ref{eLg_thm:%
compute_subquotients} it remains to find minimal
hermitian idempotents in $\LH/I'$.\\

The existence of an extension $\pi$ of $\gamma$ is
guaranteed under the assumptions of

\begin{prop}\label{eLg_prop:extension_of_gamma}
Let $G$ be an exponential Lie group, $f\in\frakg^\ast$,
and $\pi=\mcK(f)\in\widehat{G}$. Suppose that $\frakh$ is
an ideal of $\frakg$ such that $\frakg=\frakg_f+\frakh$.
Let $f_0=f\,|\,\frakh$ and $\gamma=\mcK(f_0)$. Then
$\pi\,|\,H$ is unitarily equivalent to $\gamma$. This
means that $\pi$ yields an extension of $\gamma$.
\end{prop}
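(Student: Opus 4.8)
The plan is to generalise the second case of the proof of Lemma~\ref{eLg_lem:restriction} (the case $\frakg=\frakg_f+\frakn$) from the coabelian nilradical to the arbitrary ideal $\frakh$. Using Vergne polarizations passing through $\frakh$, exactly as there, I would produce a Pukanszky polarization $\frakp\subset\frakg$ at $f$ such that $\frakq=\frakp\cap\frakh$ is a Pukanszky polarization at $f_0=f\,|\,\frakh$; compare Chapter~4 of~\cite{Bern_et_alii} and Chapter~1 of~\cite{LepLud}. Writing $P=\exp\frakp$ and $Q=\exp\frakq$, this realises $\pi=\ind_P^G\chi_f$ and $\gamma=\mcK(f_0)=\ind_Q^H\chi_{f_0}$.

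The geometric input is that $\frakg_f\subset\frakp$ and the hypothesis $\frakg=\frakg_f+\frakh$ together force $\frakg=\frakp+\frakh$. Since $\frakh$ is an ideal, $H$ is normal and $PH$ is the connected subgroup with Lie algebra $\frakp+\frakh=\frakg$, whence $G=PH$; and because $\exp:\frakg\to G$ is a global diffeomorphism, $\exp X\in P\cap H$ holds iff $X\in\frakp\cap\frakh$, so that $P\cap H=\exp\frakq=Q$. The inclusion $H\subset G$ therefore induces a diffeomorphism $Q\backslash H\cong P\backslash G$. I would then introduce the restriction map $U\varphi=\varphi\,|\,H$ on $\mcC_0^{\chi_f}(G)$. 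As $f_0=f\,|\,\frakh$ and $\frakq\subset\frakh$, the characters $\chi_f$ and $\chi_{f_0}$ agree on $Q$, so $U\varphi$ obeys the transformation law defining $\mcC_0^{\chi_{f_0}}(H)$; using $G=HP$ and $H\cap P=Q$ one checks that $U$ is a linear bijection onto $\mcC_0^{\chi_{f_0}}(H)$ with inverse $\psi\mapsto\bigl(hp\mapsto\overline{\chi_f(p)}\,\psi(h)\bigr)$. The intertwining property is immediate, for $h_0,h\in H$:
\[(U\,\pi(h_0)\varphi)(h)=\varphi(h_0^{-1}h)=(U\varphi)(h_0^{-1}h)=(\gamma(h_0)\,U\varphi)(h).\]

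The main obstacle is to upgrade $U$ to a \emph{unitary} isomorphism $\frakH_\pi=L^2_{\chi_f}(G)\to\frakH_\gamma=L^2_{\chi_{f_0}}(H)$, i.e.\ to reconcile the two $L^2$-structures --- this is the content hidden in the phrase ``extends to a unitary isomorphism'' in Lemma~\ref{eLg_lem:restriction}. Under the identification $Q\backslash H\cong P\backslash G$ the quasi-invariant measures and the half-density factors built into the unitary inductions must match, which comes down to the identity $\Delta_{G,P}(q)=\Delta_{H,Q}(q)$ for $q\in Q$. I would verify this by computing both sides as $|\det\Ad(q)|$ on $\frakg/\frakp$ and on $\frakh/\frakq$ respectively and observing that the isomorphism $\frakg/\frakp\cong\frakh/\frakq$ induced by $\frakg=\frakp+\frakh$, $\frakq=\frakp\cap\frakh$ is $\Ad(q)$-equivariant (here $\frakh$ being an ideal guarantees $\Ad(q)\frakh=\frakh$). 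Granting this identity, $U$ is isometric on the dense subspace and extends to the desired unitary equivalence $\pi\,|\,H\cong\gamma$. As an independent check, Mackey's subgroup theorem gives the same conclusion at once: as $G=HP$ consists of a single $(H,P)$-double coset, $\ind_P^G\chi_f\,|\,H\cong\ind_{H\cap P}^H\bigl(\chi_f\,|\,H\cap P\bigr)=\ind_Q^H\chi_{f_0}=\gamma$.
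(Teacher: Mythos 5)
Your proof is correct, and it reaches the same final configuration as the paper by a genuinely different construction. The paper works \emph{bottom-up}: it invokes the existence of a $\frakg_f$-invariant Pukanszky polarization $\frakp_0\subset\frakh$ at $f_0$ (citing \S 4, Ch.~I of~\cite{LepLud} and Ch.~5 of~\cite{Bern_et_alii}) and then spends most of the proof verifying by hand that $\frakp=\frakg_f+\frakp_0$ is a Pukanszky polarization at $f$ --- isotropy, the dimension count via $\frakh/\frakh_{f_0}\cong\frakg/\frakg_f$, and the transport of the Pukanszky condition from $\frakp_0$ to $\frakp$ using $\frakg=\frakg_f+\frakh$; here the $\frakg_f$-invariance of $\frakp_0$ is what makes $\frakg_f+\frakp_0$ a subalgebra at all. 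You work \emph{top-down}: a Vergne polarization $\frakp$ at $f$ passing through the ideal $\frakh$ gives $\frakq=\frakp\cap\frakh$ Pukanszky at $f_0$ for free, and the hypothesis $\frakg=\frakg_f+\frakh$ enters only once, through the automatic inclusion $\frakg_f\subset\frakp$ (the radical of $B_f$ lies in every maximal isotropic subspace), to yield $\frakg=\frakp+\frakh$, $G=PH$, $P\cap H=Q$. This eliminates both the invariant-polarization existence theorem and the direct Pukanszky verification, at the cost of leaning on the Vergne-through-an-ideal machinery --- which is legitimate, since the paper itself invokes exactly this in Lemma~\ref{eLg_lem:restriction}; the one caveat is that for exponential algebras that are not completely solvable, real Jordan--H\"older flags through $\frakh$ may have two-dimensional steps, so ``Vergne polarization'' must be understood in the refined sense of the cited sources (the same caveat applies to the paper's own use in Lemma~\ref{eLg_lem:restriction}). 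From that point on the two arguments coincide: both take restriction of functions $\varphi\mapsto\varphi\,|\,H$ as the intertwiner, and your $\Ad(q)$-equivariance computation of $\det$ on $\frakg/\frakp\cong\frakh/\frakq$ is precisely the justification the paper leaves implicit for its assertion $\Delta_{G,P}\,|\,H=\Delta_{H,P_0}$. Your closing appeal to Mackey's subgroup theorem (single double coset $G=HP$, so no regularity issues) is a clean independent confirmation that bypasses the measure-matching altogether, though it trades the paper's elementary argument for a substantially heavier theorem.
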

\begin{proof}
Recall that there exists a $\frakg_f$-invariant Pukanszky
polarization $\frakp_0\subset\frakh$ at $f_0\in\frakh^\ast$
because $\frakg$ is exponential, see \S 4, Chapter I
of~\cite{LepLud} and Chapter 5 of \cite{Bern_et_alii}. We
shall verify that $\frakp=\frakg_f+\frakp_0\subset\frakg$
defines a Pukanszky polarization at $f\in\frakg^\ast$: Clearly
$[\frakp,\frakp]\subset[\frakg_f,\frakg_f]+[\frakg_f,%
\frakp_0]+[\frakp_0,\frakp_0]\subset\frakp\cap\ker f$.
Note that $\frakp_0=\frakp\cap\frakh$ and
$\frakh_{f_0}=\frakg_f\cap\frakh$. Using the canonical
isomorphisms $\frakh/\frakh_{f_0}\cong\frakg/\frakg_f$ and
$\frakp_0/\frakh_{f_0}\cong\frakp/\frakg_f$ we conclude
that $\dim\frakg/\frakg_f=\frac{1}{2}\dim\frakp/\frakg_f$.
It remains to prove that $\coAd(P)f=f+\frakp^\bot$. If
$h\in\frakp^\bot$, then $h_0=h\,|\,\frakh\in\frakp_0^\bot%
\subset\frakh^\ast$. Since $\frakp_0$ is a Pukanszky
polarization at $f_0$, there exists some $x\in P_0$ such
that $\coAd(x)f_0=f_0+h_0$. This implies $\coAd(x)f=f+h$
because $\frakg=\frakg_f+\frakh$. Thus
$\coAd(P)f=f+\frakp^\bot$.\\\\
Fix a relatively $G$-invariant measure on $G/P$. There
exists a unique relatively $H$-invariant measure on $H/P_0$
such that the canonical $H$-equivariant diffeomorphism
$H/P_0\to G/P$ is measure-preserving. The modular functions
of these measures satisfy $\Delta_{G,P}\,|\,H=\Delta_{H,P_0}$.
Now it follows that $\varphi\mapsto \varphi\,|\,H$ defines
a unitary isomorphism from $L^2(G,\chi_f)$ onto
$L^2(H,\chi_{f_0})$ which intertwines $\pi\,|\,H$
and $\gamma$. This completes the proof.
\end{proof}

The next theorem results from the achievements of
Poguntke in \cite{Pog4} concerning the parametrization
of simple $\LG$-modules. In \cite{LudMol} Ludwig and
Molitor-Braun gave a simplified proof of Theorem~%
\ref{eLg_thm:commutative_subquotients} which in
particular avoids projective representations. The
decisive idea of Ludwig and Molitor-Braun may
be recapitulated as follows: If $H/N$ is chosen
to be a vector space complement to $M/N$ instead
of $K/N$ as in \cite{Pog4}, then one ends up
directly with a commutative subquotient.\\

Recall that any simple $\LG$-module can be regarded
as an $\LG^b$-module. In particular, if $N$ is a
closed subgroup of $G$, then $E$ becomes an
$\LN$-module so that $\Ann_{\LN}(E)$ is
defined.\\\\
Let $(G,\mcA)$ be a covariance system. A
$G$-invariant ideal $J$ of $\mcA$ is called
$G$-prime if $J_1J_2\subset J$ for $G$-invariant
ideals $J_1$, $J_2$ of $\mcA$ implies
$J_1\subset J$ or $J_2\subset J$. If $N$ is a
closed normal subgroup of $G$, then one has the
covariance system $(G,\LN)$ with $G$-action
$a^x(n)=\delta_N(x^{-1})\,a(n^{x^{-1}})$.

\begin{thm}\label{eLg_thm:commutative_subquotients}
Let $N$ be a closed, connected, coabelian, nilpotent
subgroup of the exponential Lie group $G$.
\begin{enumerate}
\item If $E$ is a simple $\LG$-module, then
$J=\Ann_{\LN}(E)$ is $G$-prime.
\item Conversely let $J$ be a $G$-prime ideal of $\LN$.
Define $I=\ind_N^G(J)$. The simple $\LG$-modules $E$
such that $J\subset\Ann_{\LN}(E)$ are in a canonical
bijection with the simple modules of $\mcB=L^1(G)/I$.
Moreover, there exist hermitian idempotents
$q\in\mcB^b$ such that the corner $q\ast\mcB\ast q$
is commutative and such that $q\mdot E\neq 0$
exactly for those simple $\mcB$-modules $E$
with $J=\Ann_{\LN}(E)$.
\end{enumerate}
\end{thm}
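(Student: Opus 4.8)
The plan is to prove the two assertions by rather different means: the $G$-primeness in \emph{1.} by a standard prime-ideal argument, and the bijection together with the commutative corner in \emph{2.} by realising $\mcB$ as a twisted covariance algebra over a normal subgroup $H$ with $G/H$ abelian. For \emph{1.}\ I would first note that $J=\Ann_{\LN}(E)$ is $G$-invariant: viewing $E$ as an $\LG^b$-module, each $x\in G$ acts by a unitary multiplier implementing $a\mapsto a^x$, so $a\mdot E=0$ gives $a^x\mdot E=0$. To prove primeness, let $J_1,J_2$ be $G$-invariant ideals with $J_1\ast J_2\subset J$ and suppose $J_2\not\subset J$. Set $P=\Ann_{\LG}(E)$ (primitive, hence prime) and $I_k=\ind_N^G(J_k)$, $I=\ind_N^G(J)$. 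Using Lemma~\ref{eLg_lem:convolution_with_Ginvariant_ideals} to write $I_k=(\LG\ast J_k)\clos=(J_k\ast\LG)\clos$, one checks $I_1\ast I_2\subset I$, while $J\mdot E=0$ forces $I\mdot E=0$, i.e.\ $I\subset P$. Thus $I_1\ast I_2\subset P$, and primeness gives $I_1\subset P$ or $I_2\subset P$. The crucial identity is $\res_N^G(P)=\Ann_{\LN}(E)=J$, which follows from the density of $\LG\mdot\xi$ (Lemma~\ref{eLg_lem:associated_simple_modules}); together with $\res_N^G(\ind_N^G J_k)=J_k$ (Theorem~\ref{eLg_thm:induced_ideals_in_L1}) it upgrades $I_k\subset P$ to $J_k\subset J$. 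Since $J_2\not\subset J$, necessarily $I_1\subset P$, so $J_1\subset J$, which is exactly $G$-primeness.

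For the bijection in \emph{2.}\ the point is that a simple $\LG$-module $E$ satisfies $I\subset\Ann_{\LG}(E)$ if and only if $J\subset\Ann_{\LN}(E)$: the reverse implication is immediate since $J\mdot E=0$ forces $(\LG\ast J)\clos\mdot E=0$, and the forward one follows from $\res_N^G(\Ann_{\LG}(E))=\Ann_{\LN}(E)$ together with $\res_N^G(I)=J$ (Theorem~\ref{eLg_thm:induced_ideals_in_L1}, $J$ being $G$-invariant). Hence the simple $\LG$-modules with $J\subset\Ann_{\LN}(E)$ are precisely the simple $\mcB$-modules, $\mcB=\LG/I$.

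The serious part is the commutative corner, which I would obtain from Theorem~\ref{eLg_thm:compute_subquotients}. Since $N$ is nilpotent the Kirillov map is a $G$-equivariant homeomorphism onto $\widehat N$ and $\widehat N$ is $T_1$, so a $G$-prime $J$ is the kernel $k(G\mdot\sigma)$ of a single orbit $\sigma=\mcK(l)$ with $l\in\frakn^\ast$. Choose $f\in\frakg^\ast$ extending $l$ and put $\frakm=\frakg_f+\frakn$. As $\frakg/\frakn$ is abelian, every subspace $\frakh$ with $\frakn\subset\frakh\subset\frakg$ is automatically an ideal; I would select $\frakh$ with $\frakg/\frakn=\frakm/\frakn\oplus\frakh/\frakn$, which simultaneously gives $\frakg=\frakg_f+\frakh$ and makes $G/H$ abelian. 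Proposition~\ref{eLg_prop:extension_of_gamma} then shows that $\gamma=\mcK(f\,|\,\frakh)$ extends to $\pi=\mcK(f)$, so $(\pi,\gamma)$ is a covariance pair of $(G,\LH,\tau)$; passing to the faithful quotient $\mcA=\LH/\ker_{\LH}\gamma$ and invoking Poguntke's minimal idempotent for the exponential group $H$ produces a minimal hermitian idempotent $q\in\mcA\subset\mcB^b$. Theorem~\ref{eLg_thm:compute_subquotients} identifies $q\ast\mcB\ast q$ with a weighted convolution algebra $L^1(G/H,w)$, which is commutative precisely because $G/H$ is abelian.

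The main obstacle I anticipate is the final clause, namely that $q\mdot E\neq0$ holds exactly when $J=\Ann_{\LN}(E)$ rather than merely $J\subset\Ann_{\LN}(E)$. By Proposition~\ref{eLg_prop:corners} the modules with $q\mdot E\neq0$ correspond bijectively to the simple modules of the corner $L^1(G/H,w)$, and I would argue that a module with $\Ann_{\LN}(E)\supsetneq J$ sits over a strictly smaller orbit than $G\mdot\sigma$, on which the idempotent $q$ --- manufactured from the \emph{faithful} representation $\gamma$ over $\sigma$ --- acts as zero. Pinning down this correspondence, and checking that $q$ really is a multiplier of the whole of $\mcB$ and not merely of a subquotient, is where the genuine effort lies; by contrast the commutativity is essentially automatic once $H$ has been chosen so that $G/H$ is abelian.
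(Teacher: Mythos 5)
Your part \emph{1.}\ and the bijection in part \emph{2.}\ are correct and essentially the paper's own argument (your variant, which replaces the direct simple-module computation by the identities $\res_N^G(\Ann_{\LG}(E))=\Ann_{\LN}(E)$ and $\res_N^G(\ind_N^G(J_k))=J_k$ from Theorem~\ref{eLg_thm:induced_ideals_in_L1}, is fine), and your choice of $H$ with $H/N$ complementary to $M/N$ is exactly the paper's. The problems begin where you yourself locate the ``genuine effort'', plus at one point you present as settled. First, your claim that a $G$-prime ideal $J$ of $\LN$ equals $k(G\mdot\sigma)$ for a single orbit ``since the Kirillov map is a homeomorphism and $\widehat{N}$ is $T_1$'' is not a proof: $T_1$-ness only makes points closed, whereas the hull of a $G$-prime ideal is a priori just a closed $G$-invariant subset of $\widehat{N}$, possibly a union of many orbits, and $J$ need not even coincide with the kernel of its hull, since ideals of $\LN$ are not determined by their hulls (failure of spectral synthesis). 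This statement is precisely Theorem~7 of~\cite{Pog4}, respectively Theorem~1.1.6 of~\cite{LudMol}, which the paper quotes as a deep external input; it cannot be derived from the topology of $\widehat{N}$.

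Second, for your idempotent $q\in\mcA=\LH/\ker_{\LH}\gamma$ to be a multiplier of $\mcB=\LG/I$ at all, one needs the identification $\ker_{\LH}\gamma=\ind_N^H(J)=I'$, so that $\mcB\cong L^1(G,\LH/I',\dot{\tau})$ carries $\mcA$ inside $\mcB^b$. The paper proves this: $\frakh_{f_0}=\frakg_f\cap\frakh\subset\frakn$ forces $\coAd(H)f_0$ to be saturated over $\frakn$, hence $\ker_{\LH}\gamma$ is $\mcC_\infty(H/N)$-invariant, and Theorem~\ref{eLg_thm:induced_ideals_in_L1} together with $\ker_{\LN}\gamma=k(G\mdot\sigma)=J$ (Lemma~\ref{eLg_lem:restriction}) yields the equality. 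You flag this point but leave it open. Third, and most seriously, the final clause --- $q\mdot E\neq 0$ if and only if $J=\Ann_{\LN}(E)$ --- is the heart of the theorem, and your ``strictly smaller orbit'' heuristic is not an argument. The paper's proof needs genuinely more machinery: Dixmier's description of the ideal $Q_0'$ of elements sent by $\gamma$ to finite-rank operators, the resulting dichotomy that either $Q_0'\subset\Ann_{\LH}(E)$ or every $b\in Q_0'\setminus I'$ acts nontrivially on $E$; for the if-direction, the $(H/N)\widehat{\;\;}$-invariance of $Q'$ and Theorem~\ref{eLg_thm:induced_ideals_in_L1} to write $Q'=\ind_N^H(R)$ with $R\not\subset J$; and for the only-if-direction, the $G$-primeness of $I'$ together with the faithfulness and irreducibility of $\gamma$ on $\mcA$. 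None of this appears in your sketch, so the exactness statement --- the part of the theorem that is actually used later (Corollary~\ref{eLg_cor:simple_modules_with_the_same_annihilator_on_N}) --- remains unproven.
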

\begin{proof}\hfill
\begin{enumerate}
\item[\textit{1.}] Recall that $\lambda(x)f\,(y)=f(x^{-1}y)$
defines a group homomorphism from $G$ into the unitary
group of $\LG^b$. Since $a^x\mdot\xi=%
\lambda(x^{-1})\mdot(a\mdot(\lambda(x)\mdot\xi)))$
for $a\in\LN$, $x\in G$, and $\xi\in E$, it
follows that $J$ is $G$-invariant. Now let
$J_1$, $J_2$ be $G$-invariant ideals of $\LN$
such that $J_1\ast J_2\subset J=\Ann_{\LN}(E)$.
Then
\begin{align*}
\ind_N^G(J_1)\ast\ind_N^G(J_2)&\subset
\left(\;\LG\ast J_1\ast\LG\ast J_2\ast\LG\;\right)\clos\\
&\subset\left(\;\LG\ast J_1\ast J_2\ast\LG\;\right)\clos
\subset\Ann_{\LG}(E)\;.
\end{align*}
The first inclusion is obvious and the second one
results from Lemma~\ref{eLg_lem:convolution_%
with_Ginvariant_ideals}. For the third one we use the
fact that $\Ann_{\LG}(E)$ is closed. Since this ideal is
prime, it follows $\ind_N^G(J_k)\subset\Ann_{\LG}(E)$
for $k=1$ or $2$. Finally we obtain
$J_k\subset\Ann_{\LN}(E)$ because $E$
is a simple $\LG$-module.
\item[\textit{2.}] Let $J$ be a $G$-prime ideal
of $\LN$ and $I=\ind_N^G(J)$. In order to prove
the first assertion of~\textit{2.}\ it suffices to
verify that $J\subset\Ann_{\LN}(E)$ if and only if
$I\subset\Ann_{\LG}(E)$. The only-if part is
obvious. Suppose that $I\subset\Ann_{\LG}(E)$.
Then $\LG\mdot(J\mdot E)\subset\ind_N^G(J)\mdot E=%
I\mdot E=0$ implies $J\mdot E=0$ because $E$ is
simple. This means $J\subset\Ann_{\LN}(E)$.\\\\
Next we prove the existence of appropriate hermitian
idempotents in the adjoint algebra of $\mcB=\LG/I$:
Generalizing a theorem of Poguntke in~\cite{Pog4},
Ludwig and Molitor-Braun proved in Theorem~1.1.6
of~\cite{LudMol} that there exists a unique
orbit $G\mdot\sigma$ in $\widehat{N}$ such that
$J=k(G\mdot\sigma)$. Since the Kirillov map of $N$
is bijective, we can choose $l\in\frakn^\ast$ such
that $\mcK(l)=\sigma$ and $f\in\frakg^\ast$ such that
$f\,|\,\frakn=l$. We stress that the definition of
the stabilizer $\frakm=\frakg_f+\frakn$ depends
only on the orbit $\coAd(G)l$, i.e., on the
$G$-prime ideal $J$. As usual $M$ denotes the
closed, connected subgroup of~$G$ with Lie
algebra~$\frakm$. In addition we fix a closed,
connected subgroup $H$ of~$G$ containing $N$
such that $H/N$ is complementary to $M/N$ in the
vector space~$G/N$. In particular $G=G_fH$.\\\\
The ideal $I'=\ind_N^H(J)$ is invariant under the
$G$-action $b^x(h)=\delta_H(x^{-1})\,b(h^{x^{-1}})$
and the twist $\tau(k)b\,(h)=b(k^{-1}h)$ in $\LH$.
Since $I=\ind_H^G(I')$, the quotient $\mcB=L^1(G)/I$
can be identified with $L^1(G,\LH/I',\dot{\tau})$.\\\\
Let $f_0=f\,|\,\frakh\in\frakh^\ast$ and $\gamma=%
\mcK(f_0)\in\widehat{H}$. Since $\frakg=\frakg_f+\frakh$,
Proposition~\ref{eLg_prop:extension_of_gamma} implies that
$\pi=\mcK(f)$ furnishes an extension of~$\gamma$. Note
that $\frakh_{f_0}=\frakg_f\cap\frakh\subset\frakn$.
This shows that $\coAd(H)f_0$ is saturated over
$\frakn$, i.e., $\coAd(H)f_0=f_0+\frakn^\bot$.
In particular $\ker_{\LH}\gamma$ is invariant under
multiplication by characters of $H/N$,
and hence $\mcC_\infty(H/N)$-invariant. Now
Theorem~\ref{eLg_thm:induced_ideals_in_L1}
implies $\ker_{\LH}\gamma=\ind_N^H(J)=I'$
because $\ker_{\LN}\gamma=k(G\mdot\sigma)=J$
by Lemma~\ref{eLg_lem:restriction}. We have shown
that $\gamma$ yields a faithful irreducible
representation of $\mcA=\LH/I'$ which admits an
extension~$\pi$.\\\\
Let us fix an arbitrary minimal hermitian idempotent
$q\in\mcA=\LH/I'$. Since $H$ is an exponential Lie
group, the existence such idempotents is guaranteed
by Poguntke's results in~\cite{Pog3}. Finally
Theorem~\ref{eLg_thm:compute_subquotients} shows
that the corner $q\ast\mcB\ast q$ is commutative
as it is isomorphic to a weighted Beurling algebra
$L^1(G/H,w)$ on the commutative group $G/H$.\\\\
Let $E$ be a simple $\LG$-module such that
$J\subset\Ann_{\LN}(E)$. It remains to be shown that
$q\mdot E\neq 0$ if and only if $J=\Ann_{\LN}(E)$.
The subsequent proof of the if-part is from~\cite{Pog4}.
We begin with a preliminary remark: Let $Q'_0$ denote
the ideal of all $b\in\LH$ such that $\gamma(b)$
has finite rank. Clearly $I'\subset Q'_0$ and
$q\in Q'_0\setminus I'$. Let $\frakH$ denote the
representation space of $\gamma$, and $F$ the simple
module associated to $\gamma$ in the sense of
Lemma~\ref{eLg_lem:associated_simple_modules}. It is
known that $\gamma(Q'_0)$ is equal to the algebra of
all finite rank operators $A$ of $\frakH$ such that
$A(\frakH)\subset F$ and $A^\ast(\frakH)\subset F$,
compare Th\'eor\`eme~\textit{2.}\ of Dixmier
in~\cite{Dix2}. From this we deduce
$(Q'_0/I')\ast b\ast(Q'_0/I')=Q'_0/I'$ for
all $b\in Q'_0\setminus I'$. In particular we
see that either $Q'_0\subset\Ann_{\LH}(E)$ or
$b\mdot E\neq 0$ for all
$b\in Q'_0\setminus I'$.\\\\
Suppose that $J=\Ann_{\LN}(E)$. Since $\coAd(H)f_0$
is saturated over $\frakn$, it follows that
$\gamma\otimes\alpha$ is unitarily equivalent to
$\gamma$ for all $\alpha\in(H/N)\widehat{\;\;}$.
Thus $Q'_0$ and hence its closure $Q'$ are
$(H/N)\widehat{\;\;}$-invariant. By
Theorem~\ref{eLg_thm:induced_ideals_in_L1} 
there exists an $H$-invariant ideal $R$ of $\LN$
such that $Q'=\ind_N^H(R)$. Note that $R$ is not
contained in $J$. Thus $Q=\ind_N^G(R)=\ind_H^G(Q')$
is not contained in $\Ann_{\LG}(E)$ because $E$
is simple. Consequently $Q_0'$ is not contained
in $\Ann_{\LH}(E)$ so that $q\mdot E\neq 0$ by
the preliminary remark.\\\\
In order to prove the only-if-part we suppose
$q\mdot E\neq 0$. The preceding remark implies
$Q'_0\cap\Ann_{\LH}(E)\subset I'$. Now we conclude
$Q'\ast\Ann_{\LH}(E)\subset I'$. Since $I'$
is $G$-prime and $Q'$, $\Ann_{\LH}(E)$ are
$G$-invariant ideals, we get $\Ann_{\LH}(E)\subset I'$
because $Q'\not\subset I'$. Let $a\in\Ann_{\LN}(E)$.
Since $\LH\ast a\ast\LH$ is contained in
$I'=\ker_{\LH}\gamma$, it follows
$a\in\ker_{\LN}\gamma=k(G\mdot\sigma)=J$
because $\gamma$ is irreducible.
\end{enumerate}
\end{proof}

Let $J$ a given $G$-prime ideal of $\LN$. In
combination with part \textit{4.}\ of 
Proposition~\ref{eLg_prop:corners} the
preceding theorem shows that the equivalence
classes of all simple $\LG$-modules $E$ with
annihilator $\Ann_{\LN}(E)=J$ are in a one-to-one
correspondence with the characters of the
commutative Beurling algebra
$q\ast(\LG/I)\ast q\cong L^1(G/H,w)$.\\\\
Here we are content with this rough description
and deliberately renounce more delicate questions
such as obtaining estimates for the weight $w$,
which can be found in~\cite{Pog4}.

\begin{cor}\label{eLg_cor:simple_modules_with_the_%
same_annihilator_on_N}
If $E$, $F$ are simple $\LG$-modules with
$\Ann_{\LG}(E)\subset\Ann_{\LG}(F)$ and
$J=\Ann_{\LN}(E)=\Ann_{\LN}(F)$, then $E$
and $F$ are isomorphic.
\end{cor}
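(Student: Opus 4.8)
The plan is to reduce everything to the commutative corner furnished by Theorem~\ref{eLg_thm:commutative_subquotients} and then to exploit that a character of a commutative algebra is determined by its kernel. Set $J=\Ann_{\LN}(E)=\Ann_{\LN}(F)$, which is $G$-prime by part~\textit{1.}\ of that theorem, put $I=\ind_N^G(J)$, and write $\mcB=\LG/I$. Since $J\subset\Ann_{\LN}(E)$ and $J\subset\Ann_{\LN}(F)$, the equivalence established inside the proof of part~\textit{2.}\ of the theorem (namely $J\subset\Ann_{\LN}(E)$ if and only if $I\subset\Ann_{\LG}(E)$) shows that $I$ annihilates both $E$ and $F$. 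Hence $E$ and $F$ are simple $\mcB$-modules, with $\Ann_{\mcB}(E)=\Ann_{\LG}(E)/I$ and $\Ann_{\mcB}(F)=\Ann_{\LG}(F)/I$, so that the hypothesis $\Ann_{\LG}(E)\subset\Ann_{\LG}(F)$ descends to $\Ann_{\mcB}(E)\subset\Ann_{\mcB}(F)$.

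Next I would bring in the hermitian idempotent $q\in\mcB^b$ from part~\textit{2.}\ of Theorem~\ref{eLg_thm:commutative_subquotients}, for which the corner $q\ast\mcB\ast q$ is commutative. Because $J$ equals, and not merely is contained in, each of $\Ann_{\LN}(E)$ and $\Ann_{\LN}(F)$, the same theorem guarantees $q\mdot E\neq 0$ and $q\mdot F\neq 0$. Thus $q\mdot E$ and $q\mdot F$ are simple $q\mcB q$-modules, and by part~\textit{4.}\ of Proposition~\ref{eLg_prop:corners} it suffices to prove $q\mdot E\cong q\mdot F$, since the assignment $[E']\mapsto[q\mdot E']$ is a bijection on isomorphism classes. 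Using part~\textit{3.}\ of the same proposition I would then compute
\[
\Ann_{q\mcB q}(q\mdot E)=q\mcB q\cap\Ann_{\mcB}(E)\subset
q\mcB q\cap\Ann_{\mcB}(F)=\Ann_{q\mcB q}(q\mdot F),
\]
the inclusion being precisely the descended hypothesis.

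The crux is then purely commutative-algebraic. Since $q\mcB q\cong L^1(G/H,w)$ is a commutative Banach algebra, every simple module is one-dimensional, given by a character, and its annihilator is the kernel of that character, an ideal of codimension one. An inclusion between two such proper codimension-one kernels forces equality, so the displayed inclusion yields $\Ann_{q\mcB q}(q\mdot E)=\Ann_{q\mcB q}(q\mdot F)$; as a simple module over a commutative algebra is determined up to isomorphism by its annihilator, this gives $q\mdot E\cong q\mdot F$ and hence $E\cong F$ through the bijection of Proposition~\ref{eLg_prop:corners}. I expect the only point requiring care to be the bookkeeping that the corner annihilators are genuinely maximal (equivalently of codimension one), so that containment upgrades to equality; the reduction steps themselves are immediate applications of the two quoted results.
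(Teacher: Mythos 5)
Your proposal is correct and follows essentially the same route as the paper: regard $E$ and $F$ as modules over $\mcB=\LG/\ind_N^G(J)$, pass to the commutative corner $q\ast\mcB\ast q$ via the idempotent of Theorem~\ref{eLg_thm:commutative_subquotients}, compare annihilators using Proposition~\ref{eLg_prop:corners}.\textit{3.}, and lift the isomorphism back with Proposition~\ref{eLg_prop:corners}.\textit{4.} You merely spell out details the paper compresses (the reduction to $\mcB$-modules and the fact that inclusion of codimension-one character kernels forces equality, which the paper attributes to Schur's Lemma), so there is nothing to correct.
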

\begin{proof}
Note that $E$ and $F$ can be regarded as $\mcB$-modules
where $\mcB=\LG/\ind_N^G(J)$. Let $q\in\mcB^b$ be a
hermitian idempotent as in part~\textit{2.}\ of
Theorem~\ref{eLg_thm:commutative_subquotients}.
By definition $q\mdot E$ and $q\mdot F$ are non-zero.
Hence they are simple modules over the commutative
$\ast$-algebra $q\ast\mcB\ast q$ with annihilators
$\Ann_{q\ast\mcB\ast q}(q\mdot E)\subset
\Ann_{q\ast\mcB\ast q}(q\mdot F)$, compare
Proposition~\ref{eLg_prop:corners}. It results from
Schur's Lemma that $q\mdot E$ and $q\mdot F$ are
one-dimensional, have the same annihilator, and
are thus isomorphic. Finally Proposition~%
\ref{eLg_prop:corners}.\textit{4.}\ shows that
$E$ and $F$ are isomorphic as $\mcB$-modules,
and also as $\LG$-modules.
\end{proof}

\begin{cor}\label{eLg_cor:representations_over_same%
_orbit_on_N}
If $\pi,\rho\in\widehat{G}$ are irreducible such
that $\ker_{\LG}\pi\subset\ker_{\LG}\rho$ and
$\ker_{\LN}\pi=\ker_{\LN}\rho$, then $\pi$ and
$\rho$ are unitarily equivalent.
\end{cor}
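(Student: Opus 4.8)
The plan is to transfer the statement from unitary representations to the simple $\LG$-modules attached to them and then invoke Corollary~\ref{eLg_cor:simple_modules_with_the_same_annihilator_on_N}. Since $G$ is exponential, Poguntke's result furnishes for $\pi$ an element $q_\pi\in\LG$ such that $\pi(q_\pi)$ is a one-dimensional orthogonal projection; in particular the ideal $I$ of all $f\in\LG$ with $\pi(f)$ of finite rank is non-zero. Hence Lemma~\ref{eLg_lem:associated_simple_modules}.\textit{(ii)} applies and produces a simple $\LG$-module $E=\pi(I)\frakH_\pi$ with $\Ann_{\LG}(E)=\ker_{\LG}\pi$. Running the same construction for $\rho$ yields a simple module $F$ with $\Ann_{\LG}(F)=\ker_{\LG}\rho$.

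The key point is to compute the $N$-annihilators of these modules. I regard $E$ as an $\LG^b$-module in the sense of Proposition~\ref{eLg_prop:corners}.\textit{(2)}, and hence as an $\LN$-module via the canonical embedding $\LN\hookrightarrow\LG^b$. For $a\in\LN$ this action is nothing but the restriction of $\pi$ to $N$, so $a$ acts on the invariant subspace $E\subset\frakH_\pi$ as $\pi(a)|_E$. Since $E=\pi(I)\frakH_\pi$ is dense in $\frakH_\pi$ and $\pi(a)$ is bounded, the relation $a\in\Ann_{\LN}(E)$ holds if and only if $\pi(a)$ vanishes on a dense subspace, i.e.\ if and only if $\pi(a)=0$. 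This yields $\Ann_{\LN}(E)=\ker_{\LN}\pi$, and likewise $\Ann_{\LN}(F)=\ker_{\LN}\rho$. I expect this identification — verifying that the abstract $\LN$-module structure on the submodule $E$ coincides with the honest restriction $\pi\,|\,N$, and then using density — to be the only genuinely technical step.

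With these identifications the hypotheses translate exactly into $\Ann_{\LG}(E)\subset\Ann_{\LG}(F)$ and $\Ann_{\LN}(E)=\Ann_{\LN}(F)$, so Corollary~\ref{eLg_cor:simple_modules_with_the_same_annihilator_on_N} shows that $E$ and $F$ are isomorphic as $\LG$-modules. In particular their annihilators agree, whence $\ker_{\LG}\pi=\Ann_{\LG}(E)=\Ann_{\LG}(F)=\ker_{\LG}\rho$. Finally, the image of $q_\pi$ in $\LG/\ker_{\LG}\pi$ is a minimal hermitian idempotent, so Lemma~\ref{eLg_lem:algebras_with_minimal_hermitian_idempotents}.\textit{(ii)} (equivalently, the injectivity of $\Psi$ already recorded in the text) upgrades the equality of $L^1$-kernels to unitary equivalence of $\pi$ and $\rho$, which completes the argument.
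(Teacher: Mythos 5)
Your proof is correct and coincides with the paper's own argument: the paper likewise passes to the simple $\LG$-modules $E$ and $F$ associated to $\pi$ and $\rho$ via the lemma on associated simple modules, feeds the resulting annihilator relations into the corollary on simple modules with the same annihilator over $N$ to obtain $E\cong F$ and hence $\ker_{\LG}\pi=\ker_{\LG}\rho$, and then upgrades this equality to unitary equivalence by the lemma on algebras with minimal hermitian idempotents. The only difference is one of detail: you verify explicitly (using Poguntke's rank-one projection and the density of $E=\pi(I)\frakH_{\pi}$) that the module-theoretic annihilators satisfy $\Ann_{\LG}(E)=\ker_{\LG}\pi$ and $\Ann_{\LN}(E)=\ker_{\LN}\pi$, identifications the paper records as holding ``by definition.''
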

\begin{proof}
Let $E$, $F$ denote the simple $\LG$-modules associated
to $\pi$, $\rho$ respectively in the sense of Lemma~%
\ref{eLg_lem:associated_simple_modules}.\textit{(ii)}.
By definition $\Ann_{\LG}(E)\subset\Ann_{\LG}(F)$
and $J=\Ann_{\LN}(E)=\Ann_{\LN}(F)$. Thus $E$ and $F$
are isomorphic by Corollary~\ref{eLg_cor:simple_%
modules_with_the_same_annihilator_on_N}. This means
$\ker_{\LG}\pi=\Ann_{\LG}(E)=\Ann_{\LG}(F)=\ker_{\LG}\rho$.
Finally $\pi$ and $\rho$ are unitarily equivalent by
Lemma~\ref{eLg_lem:algebras_with_minimal_%
hermitian_idempotents}.\textit{(ii)}.
\end{proof}

These preparations make it easy to prove the
main result of this section.

\begin{prop}\label{eLg_prop:closed_orbit_is_sufficient}
Let $G$ be an exponential Lie group and $N$ a closed,
connected, coabelian, nilpotent subgroup. Let
$\pi\in\widehat{G}$ and $G\mdot\sigma$ be the
unique $G$-orbit in $\widehat{N}$ such that
$k(G\mdot\sigma)=\ker_{\CN}\pi$. If $G\mdot\sigma$
is closed in $\widehat{N}$, then $\ker_{\CG}\pi$
is $\LG$-determined.
\end{prop}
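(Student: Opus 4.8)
The plan is to verify condition~\textit{(ii)} of Definition~\ref{Bsa_def:A_determined} for $I=\ker_{\CG}\pi$. Since $G$ is type~I, every $P\in\Prim\CG$ is of the form $P=\ker_{\CG}\rho$ for some $\rho\in\widehat{G}$, and then $I'=\ker_{\LG}\pi$ and $P'=\ker_{\LG}\rho$. So I must show: whenever $\ker_{\LG}\pi\subset\ker_{\LG}\rho$, it follows that $\ker_{\CG}\pi\subset\ker_{\CG}\rho$. I would deduce this from Corollary~\ref{eLg_cor:representations_over_same_orbit_on_N}, whose remaining hypothesis is $\ker_{\LN}\pi=\ker_{\LN}\rho$; producing this equality out of the $L^1$-inclusion on $G$ together with the closedness of $G\mdot\sigma$ is the heart of the matter.

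First I would pass to the associated simple modules $E,F$ of $\pi,\rho$ via Lemma~\ref{eLg_lem:associated_simple_modules}.\textit{(ii)}, so that $\Ann_{\LG}(E)=\ker_{\LG}\pi\subset\ker_{\LG}\rho=\Ann_{\LG}(F)$, while $J:=\Ann_{\LN}(E)=\ker_{\LN}\pi$ equals the $L^1$-kernel $k(G\mdot\sigma)$ of the orbit (the orbit being the same at the $L^1$- and $\Cst$-levels since $N$ is $\ast$-regular). By part~\textit{1.}\ of Theorem~\ref{eLg_thm:commutative_subquotients} this ideal $J$ is $G$-prime, so the equivalence established in part~\textit{2.}\ of that theorem applies to $J$ and to \emph{any} simple $\LG$-module: writing $I_0=\ind_N^G(J)$, one has $I_0\subset\Ann_{\LG}(\,\cdot\,)$ if and only if $J\subset\Ann_{\LN}(\,\cdot\,)$. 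Applying this to $E$, where $J\subset\Ann_{\LN}(E)=J$ holds trivially, gives $I_0\subset\Ann_{\LG}(E)=\ker_{\LG}\pi\subset\ker_{\LG}\rho=\Ann_{\LG}(F)$; applying the converse direction to $F$ then yields $J\subset\Ann_{\LN}(F)=\ker_{\LN}\rho$. Hence $\ker_{\LN}\pi\subset\ker_{\LN}\rho$.

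Next I would invoke the orbit description of $\Ann_{\LN}(F)$: since it is $G$-prime, $\ker_{\LN}\rho=k(G\mdot\sigma')$ for a unique orbit $G\mdot\sigma'\subset\widehat{N}$. Because $N$ is connected nilpotent, it is $\ast$-regular, so $\Prim_{\ast}\LN\cong\widehat{N}$ as topological spaces and the inclusion $k(G\mdot\sigma)=\ker_{\LN}\pi\subset\ker_{\LN}\rho=k(G\mdot\sigma')$ translates into the closure containment $G\mdot\sigma'\subset\overline{G\mdot\sigma}$ in $\widehat{N}$. Here the hypothesis enters decisively: as $G\mdot\sigma$ is closed, $\overline{G\mdot\sigma}=G\mdot\sigma$, whence $G\mdot\sigma'\subset G\mdot\sigma$; since distinct $G$-orbits are disjoint, this forces $G\mdot\sigma'=G\mdot\sigma$ and therefore $\ker_{\LN}\rho=k(G\mdot\sigma')=k(G\mdot\sigma)=\ker_{\LN}\pi$.

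With $\ker_{\LG}\pi\subset\ker_{\LG}\rho$ and $\ker_{\LN}\pi=\ker_{\LN}\rho$ both in hand, Corollary~\ref{eLg_cor:representations_over_same_orbit_on_N} yields that $\pi$ and $\rho$ are unitarily equivalent, so $\ker_{\CG}\pi=\ker_{\CG}\rho$ and in particular $I=\ker_{\CG}\pi\subset\ker_{\CG}\rho=P$. This is precisely condition~\textit{(ii)} of Definition~\ref{Bsa_def:A_determined}, so $I$ is $\LG$-determined. The main obstacle is the middle step: without closedness of $G\mdot\sigma$ one obtains only the orbit-closure inclusion $G\mdot\sigma'\subset\overline{G\mdot\sigma}$, hence merely $\ker_{\LN}\pi\subset\ker_{\LN}\rho$, which is too weak to feed into the corollary. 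The closedness is exactly what upgrades this inclusion to the equality of $N$-kernels that drives the whole argument.
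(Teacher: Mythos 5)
Your proposal is correct and is essentially the paper's own proof: both arguments use the $\ast$-regularity of the connected nilpotent group $N$ together with the closedness of $G\mdot\sigma$ to upgrade the inclusion $\ker_{\LN}\pi\subset\ker_{\LN}\rho$ to the equality $\ker_{\LN}\pi=\ker_{\LN}\rho$, and then conclude with Corollary~\ref{eLg_cor:representations_over_same_orbit_on_N}. The only deviation is that you derive the initial inclusion $\ker_{\LN}\pi\subset\ker_{\LN}\rho$ through the module machinery of Theorem~\ref{eLg_thm:commutative_subquotients}, whereas the paper obtains it at once by restricting the inclusion $\ker_{\LG}\pi\subset\ker_{\LG}\rho$ to $N$; your detour is valid but unnecessary.
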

\begin{proof}
Let $\rho$ be in $\widehat{G}$ such that
$\ker_{\LG}\pi\subset\ker_{\LG}\rho$. Restricting
to~$N$ we obtain $\ker_{\LN}\pi\subset\ker_{\LN}\rho$.
Since $N$ is $\ast\,$-regular as a connected
nilpotent Lie group, it follows
$k(G\mdot\sigma)=\ker_{\CN}\pi\subset\ker_{\CN}\rho$.
This yields $\ker_{\CN}\pi=\ker_{\CN}\rho$ because
the orbit $G\,\mdot\,\sigma$ is closed. Finally
Corollary~\ref{eLg_cor:representations_over_same_%
orbit_on_N} implies that $\pi$ and $\rho$ are
unitarily equivalent so that in particular
$\ker_{\CG}\pi=\ker_{\CG}\rho$.
\end{proof}

However, the preceding results are limited to
the case when $G\mdot\sigma$ is closed
in $\widehat{N}$.

\begin{rem}
Let $N$ be a coabelian, nilpotent subgroup of $G$
and $\pi\in\widehat{G}$ such that $G\mdot\sigma$
is not closed in $\widehat{N}$. To prove that
$\ker_{\CG}\pi$ is $\LG$-determined, one must
show that $\ker_{\CG}\pi\not\subset\ker_{\CG}\rho$
implies $\ker_{\LG}\pi\not\subset\ker_{\LG}\rho$
for all $\rho\in\widehat{G}$. Note that
$J=\ker_{\LN}\pi$ is $G$-prime and define
$I=\ind_N^G(J)$. To avoid trivialities we can
assume $\ker_{\LN}\pi\subset\ker_{\LN}\rho$ so
that $\pi$ and $\rho$ factor to representations
of $\mcB=\LG/I$. In addition we suppose
that $\ker_{\LN}\pi\neq\ker_{\LN}\rho$. Such
representations~$\rho$ are likely to exist if
$G\mdot\sigma$ is not closed. If $q\in\mcB^b$ is
a hermitian idempotent as in
Theorem~\ref{eLg_thm:commutative_subquotients},
then $\rho(q)=0$. This means that restriction
to the subquotient $q\mcB q$ is not appropriate
for proving $\ker_{\LG}\pi\not\subset\ker_{\LG}\rho$
in this case. 
\end{rem}

\boldmath
\section{A strategy to prove primitive
 $\ast\,$-regularity}
\unboldmath\label{eLg_sec:strategy}
Let $G$ be an exponential Lie group and $\frakn$ a
coabelian nilpotent ideal of its Lie algebra~$\frakg$.
In order to prove that $G$ is primitive $\ast\,$-regular,
one must show that $\ker_{\CG}\pi$ is $\LG$-determined
for all $\pi\in\widehat{G}$, i.e., according to
Definition~\ref{Bsa_def:A_determined} one must prove
that
$$\ker_{\CG}\pi\not\subset\ker_{\CG}\rho\quad\text%
{implies}\quad\ker_{\LG}\pi\not\subset\ker_{\LG}\rho$$
for all $\rho\in\widehat{G}$. Let $f,g\in\frakg^\ast$
such that $\pi=\mcK(f)$ and $\rho=\mcK(g)$. Since the
Kirillov map of $G$ is a homeomorphism with respect
to the Jacobson topology on the primitive ideal space
$\Prim\CG$ and the quotient topology on the coadjoint
orbit space $\frakg^\ast/\coAd(G)$, the relation for
the $C^\ast\,$-kernels is equivalent to
$\coAd(G)g\not\subset(\coAd(G)f)\clos$. From the
preceding subsections we extract the following
observations:
\begin{enumerate}
\item Let $\fraka$ be a non-trivial ideal of $\frakg$
such that $f=0$ on $\fraka$. Let $A$ be the connected
subgroup of $G$ with Lie algebra $\fraka$. Since
$\pi=1$ on $A$, we can pass over to a representation
$\dot{\pi}$ of the quotient $\dot{G}=G/A$. It follows
from Lemma~\ref{Bsa_lem:quotients_of_algebras} that
$\ker_{\CG}\pi$ is $\LG$-determined if and only
if $\ker_{C^\ast(\dot{G})}\dot{\pi}$ is
$L^1(\dot{G})$-determined. Often $\dot{G}$ is known
to be primitive $\ast\,$-regular by induction. If this
is the case for all proper quotients $\dot{G}$ of $G$,
then we can assume that $f$ is in general position,
i.e., $f\neq 0$ on all non-trivial ideals $\fraka$
of $\frakg$.
\item If the stabilizer $\frakm=\frakg_f+\frakn$ is
nilpotent, then $\ker_{\CG}\pi$ is $\LG$-determined by
Propositions~\ref{eLg_prop:induced_from_regular_subgroup}
and~\ref{eLg_prop:ideal_induced_from_M} because $M$ is
$\ast$-regular.
\item If $\frakg=\frakg_{f'}+\frakn$, then
$\ker_{\CG}\pi$ is $\LG$-determined by Proposition~%
\ref{eLg_prop:closed_orbit_is_sufficient}. Here and
in the sequel $f'$ denotes the restriction of $f$
to $\frakn$.
\item If $\coAd(G)g'$ is not contained in the closure
of $\coAd(G)f'$, then it follows $\ker_{\CN}\pi\not%
\subset\ker_{\CN}\rho$ because the Kirillov map is a
homeomorphism. Since $N$ is $\ast\,$-regular, we
obtain $\ker_{\LN}\pi\not\subset\ker_{\LN}\rho$ and
hence $\ker_{\LG}\pi\not\subset\ker_{\LG}\rho$.
\end{enumerate}

\begin{lem}\label{eLg_lem:1codim_nilpotent_ideal}
If there exists a one-codimensional nilpotent ideal
$\frakn$ of $\frakg$, then $G$ is primitive
$\ast\,$-regular.
\end{lem}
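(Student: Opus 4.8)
The plan is to verify directly that $\ker_{\CG}\pi$ is $\LG$-determined for every $\pi\in\widehat{G}$, which by Definition~\ref{Bsa_def:star_regular} is exactly primitive $\ast\,$-regularity of $G$. So I would fix $f\in\frakg^\ast$ with $\pi=\mcK(f)$ and write $f'=f\,|\,\frakn$. Since $\frakg/\frakn$ is one-dimensional and hence abelian, $\frakn$ is a coabelian nilpotent ideal, and therefore the connected subgroup $N$ meets the standing hypotheses of Proposition~\ref{eLg_prop:ideal_induced_from_M} and Proposition~\ref{eLg_prop:closed_orbit_is_sufficient}. The whole argument then rests on a dichotomy for the stabilizer $\frakm=\frakg_f+\frakn$: because $\frakn\subset\frakm\subset\frakg$ and $\dim\frakg/\frakn=1$, we must have either $\frakm=\frakn$ or $\frakm=\frakg$, and I will treat these two alternatives with the two criteria of Section~\ref{eLg_sec:same_orbit_on_nilradical}.

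First I would dispose of the case $\frakm=\frakn$. Here the stabilizer is nilpotent, so that $M=N$ is $\ast\,$-regular as a connected nilpotent Lie group. Proposition~\ref{eLg_prop:ideal_induced_from_M} shows that $\ker_{\CG}\pi$ is induced from $M$, and Proposition~\ref{eLg_prop:induced_from_regular_subgroup} then yields that it is $\LG$-determined; this is precisely the second observation in the list preceding the lemma.

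The remaining case is $\frakm=\frakg$, i.e.\ $\frakg_f\not\subset\frakn$. The key point is the inclusion $\frakg_f\subset\frakg_{f'}$: if $Y\in\frakg_f$ then $f([Y,\frakg])=0$, and since $[Y,\frakn]\subset\frakn\subset[Y,\frakg]$ this forces $f([Y,\frakn])=0$, that is, $Y\in\frakg_{f'}$. Consequently $\frakg=\frakg_f+\frakn\subset\frakg_{f'}+\frakn\subset\frakg$, so $\frakg=\frakg_{f'}+\frakn$. As recorded in the third observation above, the orbit $G\mdot\sigma$ is then closed in $\widehat{N}$, and Proposition~\ref{eLg_prop:closed_orbit_is_sufficient} shows $\ker_{\CG}\pi$ to be $\LG$-determined. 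Since $\pi$ was arbitrary, every primitive ideal of $\CG$ is $\LG$-determined and $G$ is primitive $\ast\,$-regular.

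The point worth emphasising is that there is essentially no obstacle here beyond checking that the dichotomy $\frakm\in\{\frakn,\frakg\}$ genuinely lines up with the two hypotheses ``$\frakm$ nilpotent'' and ``$\frakg=\frakg_{f'}+\frakn$''\,---\,and that alignment is exactly the content of the inclusion $\frakg_f\subset\frakg_{f'}$ just used. Because the codimension of $\frakn$ is one, these two sufficient criteria already exhaust the entire parameter space: there is no intermediate value of $\frakm$ to handle, so neither the reduction to general position (the first observation) nor a separate non-containment argument for the $L^1$-kernels (the fourth observation) is needed, and in particular no induction on $\dim\frakg$ is required.
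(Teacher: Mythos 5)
Your argument is correct and is essentially the paper's own proof: the paper's one-line case split is exactly your dichotomy, namely either $\frakm=\frakg_f+\frakn=\frakn$ is nilpotent (handled via Propositions~\ref{eLg_prop:ideal_induced_from_M} and~\ref{eLg_prop:induced_from_regular_subgroup}) or $\frakg=\frakg_{f'}+\frakn$ (handled via Proposition~\ref{eLg_prop:closed_orbit_is_sufficient}). One writing slip worth fixing: the chain $[Y,\frakn]\subset\frakn\subset[Y,\frakg]$ is wrong, since $\frakn\subset[Y,\frakg]$ fails in general (e.g.\ for central $Y$); the inclusion $\frakg_f\subset\frakg_{f'}$ only needs $[Y,\frakn]\subset[Y,\frakg]$, which is immediate from $\frakn\subset\frakg$, so your conclusion stands.
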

\begin{proof}
Let $f\in\frakg^\ast$ be arbitrary and $\pi=\mcK(f)$.
The assumption $\dim\frakg/\frakn=1$ implies that
either $\frakg=\frakg_{f'}+\frakn$, or that
$\frakm=\frakg_f+\frakn=\frakn$ is nilpotent.
Clearly the preceding remarks show that
$\ker_{\CG}\pi$ is $\LG$-determined.
\end{proof}

\begin{defn}\label{eLg_def:g_critical_for_orbit}
Let $f\in\frakg^\ast$ be in general position. As
before $r:\frakg^\ast\onto\frakn^\ast$ is given by
$r(g)=g'=g\,|\,\frakn$. We define $\Omega$ as the
$r$-preimage of the closure of $\coAd(G)f'$
in~$\frakn^\ast$. Note that $\Omega$ is a closed
subset of $\frakg^\ast$ containing $\coAd(G)f$ and
that $g\in\Omega$ if and only if $g'$ is in the
closure of~$\coAd(G)f'$. We say that $g$ is
critical for the orbit $\coAd(G)f$ if
$g\in\Omega\setminus(\coAd(G)f)\clos$. By
Proposition~\ref{eLg_prop:closed_orbit_is_sufficient}
we can even assume $\coAd(G)g'\neq\coAd(G)f'$.
\end{defn}

In order to prove the primitive $\ast\,$-regularity
of $G$ it thus suffices to verify the following
two assertions:
\begin{enumerate}
\item Every proper quotient $\dot{G}$ of $G$ is
primitive $\ast\,$-regular.
\item If $f\in\frakg^\ast$ is in general position
such that the stabilizer $\frakm=\frakg_f+\frakn$
is a proper, non-nilpotent ideal of $\frakg$ and if
$g\in\frakg^\ast$ is critical for the orbit
$\coAd(G)f$, then it follows
$$\ker_{\LG}\pi\not\subset\ker_{\LG}\rho\;.$$
\end{enumerate}

Let $d_1,\ldots,d_m$ be a coexponential basis
for $\frakm$ in $\frakg$. We obtain a diffeomorphism
from $\mR^m$ onto~$G/M$ by composing the smooth map
$E(s)=\exp(s_1d_1)\mdot\ldots\mdot\exp(s_md_m)$ with the
quotient map $G\onto G/M$. Define $\tilde{f}=f\,|\,\frakm$,
$\tilde{f}_s=\coAd(E(s))\tilde{f}$ in $\frakm^\ast$,
and $\tilde{\pi}_s=\mcK(\tilde{f}_s)$ in $\widehat{M}$.\\\\
Two properties of $\pi$ and their counterpart in the
Kirillov picture are worth mentioning. First $\pi\,|\,M$
is reducible. By Lemma \ref{eLg_lem:restriction} we
know that $\pi\,|\,M$ is weakly equivalent to the
subset $\{\tilde{\pi}_s:s\in\mR^m\}$ of $\widehat{M}$.
In the orbit picture $\coAd(G)\tilde{f}$ decomposes
into the disjoint union of the orbits
$\{\,\coAd(M)\tilde{f}_s:s\in\mR^m\,\}$.\\\\
Secondly, $\ker_{\CG}\pi$ is induced from $M$ by
Proposition~\ref{eLg_prop:ideal_induced_from_M}. Hence
$\ker_{\CG}\pi\subset\ker_{\CG}\rho$ is equivalent to
the corresponding inclusion in $\CM$. The same holds
true in $\LM$. In the Kirillov picture we have
$\coAd(G)f=\coAd(G)f+\frakm^\bot$ so that $g$ is
in $(\coAd(G)f)\clos$ if and only if $\tilde{g}$ is
in the closure of $\coAd(G)\tilde{f}$.\\\\
In analogy to Definition~\ref{eLg_def:g_critical_for_orbit}
we define $\tilde{\Omega}\subset\frakm^\ast$ and critical
$\tilde{g}$ for the orbit $\coAd(G)\tilde{f}$
in~$\frakm^\ast$. We say that $\tilde{f}$ is in
general position if $f(\fraka)\neq 0$ on any
non-trivial ideal $\fraka$ of $\frakg$ such that
$\fraka\subset\frakm$. Now it is easy to see that
we can replace the second assertion by the
following equivalent one:

\begin{enumerate}
\item[3.] Let $\frakm$ be a proper, non-nilpotent
ideal of $\frakg$ such that $\frakm\supset\frakn$.
If $\tilde{f}\in\frakm^\ast$ is in general position
such that $\frakm=\frakm_{\tilde{f}}+\frakn$ and
if $\tilde{g}\in\frakm^\ast$ is critical for the
orbit $\coAd(G)\tilde{f}$, then the relation
\begin{equation}\label{eLg_equ:no_inclusion_in_LM}
\bigcap\limits_{s\in\mR^m}\,\ker_{\LM}\,\tilde{\pi}_s%
\not\subset\ker_{\LM}\,\tilde{\rho}
\end{equation}
holds for the representations
$\tilde{\pi}_s=\mcK(\tilde{f}_s)$
and $\tilde{\rho}=\mcK(\tilde{g})$.
\end{enumerate}

In this situation producing functions $c\in\LM$ such
that $\pi_s(c)=0$ for all $s$ and $\rho(c)\neq 0$
turns out to be a great challenge.

\begin{rem}\label{eLg_rem:zn_contained_in_zm}
The stabilizer $\frakm=\frakg_f+\frakn$ has a
remarkable algebraic property. Note that the ideal
$[\frakm,\frakz\frakn]=[\frakm_f,\frakz\frakn]$ is
contained in $\ker f$. If in addition $f$ is
in general position, then it follows
$[\frakm,\frakz\frakn]=0$ so that
$\frakz\frakn\subset\frakz\frakm$.
\end{rem}

\begin{lem}\label{eLg_lem:metabelian_groups_are_regular}
If $\frakg$ is an exponential Lie algebra such
that $[\frakg,\frakg]$ is commutative, then
$G$ is $\ast$-regular.
\end{lem}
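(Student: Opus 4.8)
The plan is to show that the hypothesis forces the Lie algebra to satisfy condition~(R) and then to quote Theorem~\ref{eLg_thm:conditionR_implies_regularity}, which delivers full $\ast$-regularity (not merely the primitive version). Write $\frakn=[\frakg,\frakg]$ for the commutator ideal. By assumption $\frakn$ is abelian, so it is in particular a coabelian nilpotent ideal of $\frakg$, and its centre satisfies $\frakz\frakn=\frakn$. Since $G$ is exponential, once condition~(R) is verified the cited theorem applies verbatim.

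Fix an arbitrary $f\in\frakg^\ast$ and let $\frakm=\frakg_f+\frakn$ be the associated stabilizer. The first step is to locate $\frakm^\infty$ inside $[\frakm,\frakn]$. Every bracket of elements of $\frakg$ lands in $\frakn$, so $C^2\frakm=[\frakm,\frakm]\subset\frakn$, and therefore $C^k\frakm=[\frakm,C^{k-1}\frakm]\subset[\frakm,\frakn]$ for all $k\ge 3$. As the descending central series is decreasing, this gives $\frakm^\infty=\bigcap_{k\ge 1}C^k\frakm\subset C^3\frakm\subset[\frakm,\frakn]$.

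The second step is to check that $f$ annihilates $[\frakm,\frakn]$. Because $\frakn$ is abelian we have $\frakz\frakn=\frakn$, hence $[\frakm,\frakn]=[\frakm,\frakz\frakn]$, and Remark~\ref{eLg_rem:zn_contained_in_zm} shows that this ideal is contained in $\ker f$. Concretely, $[\frakn,\frakn]=0$ reduces $[\frakm,\frakn]$ to $[\frakg_f,\frakn]$, and $f([\frakg_f,\frakn])=0$ is immediate from the defining property $f([\frakg_f,\frakg])=0$ of the stabilizer $\frakg_f$. Combining the two steps yields $f=0$ on $\frakm^\infty$; since $f$ was arbitrary, $\frakg$ satisfies condition~(R), and Theorem~\ref{eLg_thm:conditionR_implies_regularity} lets me conclude that $G$ is $\ast$-regular.

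I do not anticipate a genuine obstacle: the whole argument is purely Lie-algebraic bookkeeping once condition~(R) is identified as the right target. The only point needing a moment's care is the inclusion $\frakm^\infty\subset[\frakm,\frakn]$, where one must exploit that the derived algebra is abelian in order to push the central series down into $\frakn$ after the very first bracket. All the analytic content has already been front-loaded into Theorem~\ref{eLg_thm:conditionR_implies_regularity} and Remark~\ref{eLg_rem:zn_contained_in_zm}, so the proof reduces to these elementary verifications.
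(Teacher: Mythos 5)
Your proof is correct and follows the same overall strategy as the paper: verify condition~(R) and invoke Theorem~\ref{eLg_thm:conditionR_implies_regularity}. The only difference is in how (R) is checked. The paper first passes to the quotient $\dot{\frakg}=\frakg/\fraka$, where $\fraka$ is the largest ideal of $\frakg$ contained in $\ker f$, so that $\dot f$ is in general position; it then uses the second (conditional) assertion of Remark~\ref{eLg_rem:zn_contained_in_zm} to get $\dot{\frakn}=\frakz\dot{\frakn}\subset\frakz\dot{\frakm}$, i.e.\ $\dot{\frakm}$ is 2-step nilpotent, whence $\frakm^\infty\subset\fraka\subset\ker f$. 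You instead bound the stable term directly, $\frakm^\infty\subset C^3\frakm\subset[\frakm,\frakn]=[\frakg_f,\frakn]\subset\ker f$, which needs only the unconditional first assertion of the Remark (equivalently, the defining property $f([\frakg_f,\frakg])=0$). Your verification is slightly more self-contained, avoiding the quotient step and the notion of general position; both arguments ultimately rest on the same key fact $f([\frakg_f,\frakn])=0$, and both land on the same cited theorem for the analytic content.
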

\begin{proof}
Let $f\in\frakg^\ast$ be arbitrary. If $\fraka$ denotes
the largest ideal of $\frakg$ contained in $\ker f$, then
$\dot{f}$ on $\dot{\frakg}=\frakg/\fraka$ is in general
position. By Remark~\ref{eLg_rem:zn_contained_in_zm} we
obtain $\dot{\frakn}=[\dot{\frakg},\dot{\frakg}]=%
\frakz\dot{\frakn}\subset\frakz\dot{\frakm}$. Thus the
quotient $\dot{\frakm}$ of $\frakm=\frakg_f+\frakn$
is 2-step nilpotent so that $\frakg$ satisfies
condition~(R). Now Theorem~\ref{eLg_thm:%
conditionR_implies_regularity} yields the
assertion of this lemma.
\end{proof}

\boldmath
\section{A non-$\ast$-regular example}
\unboldmath
Let $\frakg$ be an exponential Lie algebra of
dimension $\le 5$. In view of Lemma~\ref{eLg_lem:%
1codim_nilpotent_ideal} and~\ref{eLg_lem:metabelian_%
groups_are_regular} we assume that the nilradical
$\frakn$ (the maximal nilpotent ideal) of $\frakg$
is not commutative and of dimension~$\le 3$, i.e.,
$\frakn=\langle e_1,e_2,e_3\rangle$ is a 3-dimensional
Heisenberg algebra. Further we suppose that
$f\in\frakg^\ast$ is general position (so that
$f(e_3)\neq 0$) and that the stabilizer
$\frakm=\frakg_f+\frakn$ is a proper, non-nilpotent
ideal. These assumptions imply that $\frakg$ has a
basis $d,e_0,\ldots,e_3$ satisfying the commutator
relations $[e_1,e_2]=e_3$, $[e_0,e_1]=-e_1$,
$[e_0,e_2]=e_2$, $[d,e_2]=e_2$, and $[d,e_3]=e_3$.
The algebra $\frakg$ and the stabilizer
$\frakm=\langle e_0,\ldots,e_3\rangle$ are
specified as $\frakg=\frakb_5$ and
$\frakm=\frakg_{4,9}(0)$ in the (complete)
list of all non-symmetric Lie algebras up to
dimension~6 in~\cite{Pog2}, whereas symmetry
is equivalent to $\ast$-regularity by
Theorem~10 of~\cite{Pog4}.\\\\
We work in coordinates of the second kind \wrt the
above Malcev basis, given by the diffeomorphism
$\Phi(t,x)=\exp(te_0)\exp(x_1e_1)\exp(x_2e_2+x_3e_3)$
from $\mR^4$ onto $M$. Denote $f\,|\,\frakm$ again
by $f$ and let $f_s=\coAd(\exp(sd))f$. By choosing
an appropriate representative on the orbit $\coAd(G)f$
we can achieve $f(e_3)=1$ and $f(e_1)=f(e_2)=0$. Now
we compute
\begin{align*}
\coAd\left(\,\exp(sd)\,\Phi(t,x)\,\right)f\;(e_0)&=%
f(e_0)-x_1x_2\,,\\
(e_1)&=e^{t}x_2\,,\\
(e_2)&=-e^{-s}e^{-t}x_1\,,\\
(e_3)&=e^{-s}\,.
\end{align*}
These formulas for the coadjoint representation suggest
to define the $\coAd(M)$-invariant polynomial function
$$p=e_0e_3-e_1e_2-f(e_0)e_3$$
on $\frakm^\ast$ such that $p(h)=0$ for all
$h\in X=\coAd(G)f=\cup\{\coAd(M)f_s:s\in\mR\}$. Here
$e_\nu$ is interpreted as the linear function
$e_\nu(h)=h(e_\nu)$ on $\frakm^\ast$ and $f(e_0)$
is a constant. Note that $p$ is even
$\coAd(G)$-semi-invariant. The closure of the
orbit $X=\coAd(G)f$ in $\frakm^\ast$ can be
characterized by means of the $\coAd(M)$-invariant
polynomial $p$.

\begin{lem}
Let $g\in\Omega\subset\frakm^\ast$. Then
$g\in\overline{X}$ if and only if $p(g)=0$.
\end{lem}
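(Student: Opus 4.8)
The plan is to argue in the linear coordinates $y_\nu=h(e_\nu)$ on $\frakm^\ast$, in which $p(h)=y_0y_3-y_1y_2-f(e_0)y_3$ and $X=\coAd(G)f$ is cut out by the four displayed coadjoint formulas. First I would make the hypothesis $g\in\Omega$ explicit. Applying the restriction $h\mapsto h\,|\,\frakn$ to those formulas, one reads off that $y_3=e^{-s}$ ranges over $(0,\infty)$ while $(y_1,y_2)$ ranges over all of $\frakn^\ast\ominus\langle e_3\rangle$ (fix $t=0$: then $(y_1,y_2)=(x_2,-e^{-s}x_1)$ sweeps out $\mR^2$). Hence $\coAd(G)f'=r(X)$ is the open half-space $\{h\in\frakn^\ast:h(e_3)>0\}$, whose closure is $\{h(e_3)\ge 0\}$. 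Since $\Omega$ is by definition the $r$-preimage of this closure, we get $\Omega=\{g\in\frakm^\ast:g(e_3)\ge 0\}$; in coordinates the hypothesis $g\in\Omega$ reads simply $y_3\ge 0$.

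The forward implication is immediate: $p$ is a polynomial, hence continuous, and it vanishes identically on $X$ (as already noted), so $\{p=0\}$ is a closed set containing $X$, whence $\overline{X}\subset\{p=0\}$ and $g\in\overline{X}$ forces $p(g)=0$.

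For the converse I would first identify $X$ exactly as the level set $\{p=0,\ y_3>0\}$. Indeed $y_3=e^{-s}$ ranges over $(0,\infty)$, and for a fixed $y_3>0$ and arbitrary $(y_1,y_2)$ the remaining equations are solved by $t=0$, $x_2=y_1$, $x_1=-y_2/y_3$, which then forces $y_0=f(e_0)+y_1y_2/y_3$, i.e.\ precisely the unique value with $p=0$. Thus the parametrization surjects onto $\{p=0,\ y_3>0\}$, and conversely its image lies there. Consequently, if $g\in\Omega$ satisfies $p(g)=0$ and $y_3>0$, then $g\in X\subset\overline{X}$ and there is nothing more to prove.

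The substantive case is $y_3=0$, and here lies the only real (though still elementary) difficulty. On $\{y_3=0\}$ one has $p(g)=-y_1y_2$, so $p(g)=0$ is equivalent to $y_1y_2=0$, and I must produce a sequence in $X$ converging to $g=(y_0,y_1,y_2,0)$. The idea is to slide down the quadric $\{p=0\}$ toward the boundary: set $h_n(e_3)=1/n$, choose $u_n\to y_1$ and $v_n\to y_2$ with $u_nv_n=(y_0-f(e_0))/n$, put $h_n(e_1)=u_n$, $h_n(e_2)=v_n$, and let $h_n(e_0)=f(e_0)+n\,u_nv_n=y_0$ be the value forced by $p(h_n)=0$; then $h_n\in X$ and $h_n\to g$. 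Such $u_n,v_n$ exist precisely because $y_1y_2=0$: if $y_2\ne 0$ keep $v_n=y_2$ and take $u_n=(y_0-f(e_0))/(ny_2)\to 0$, symmetrically if $y_1\ne 0$, while in the degenerate corner $y_1=y_2=0$ a balanced choice of size $O(1/\sqrt n)$ with the prescribed product works. The naive single-scale choice breaks down exactly in this corner, so it is the one point deserving care; once it is handled we obtain $\{p=0\}\cap\Omega\subset\overline{X}$, which together with the forward implication gives the asserted equivalence.
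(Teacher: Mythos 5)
Your proof is correct and follows essentially the same route as the paper: the forward direction by continuity of $p$, and the converse by exhibiting explicit sequences in $X$, with the same case split on $g(e_3)>0$ versus $g(e_3)=0$ and the same balanced $O(1/\sqrt{n})$ choice in the corner $g(e_1)=g(e_2)=0$. Your exact identification $X=\{p=0,\ y_3>0\}$ is a slightly tidier way to settle the case $g(e_3)>0$ than the paper's normalization (``WLOG $g(e_3)=1$, $g(e_1)=g(e_2)=0$'' via the coadjoint action), but the substance of the argument is the same.
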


\begin{proof}
The only-if-part is obvious because $p(h)=0$ for
all $h\in X$. Let us prove the opposite direction.
Let $g\in\Omega$ such that $p(g)=0$. We must
distinguish four cases. First we assume $g(e_3)\neq 0$.
Since $g\in\Omega$, it follows $g(e_3)>0$. Without
loss of generality we can assume $g(e_3)=1=f(e_3)$
and $g(e_1)=g(e_2)=0$. Now $p(g)=0$ implies
$g(e_0)=f(e_0)$ so that $g\in X$. Next we consider
the case $g(e_3)=g(e_2)=0$ and $g(e_1)\neq 0$. If
we define $s_n=n$, $x_{n1}=(g(e_0)-f(e_0))/g(e_1)$,
and $x_{n2}=g(e_1)$, then it follows $f_n\to g$ for
$$f_n=\coAd\left(\,\exp(s_nd)\Phi(0,x_n)\,\right)f$$
in $X$ so that $g\in\overline{X}$. The third case is
$g(e_3)=g(e_1)=0$ and $g(e_2)\neq 0$. If we set
$s_n=n$, $x_{n1}=-e^{n}g(e_2)$, and
$x_{n2}=-e^{-n}(f(e_0)-g(e_0))/g(e_2)$, then it
follows $f_n\to g$. Finally we assume $g(e_{\nu})=0$
for $1\le\nu\le 3$. In this case $s_n=n$,
$x_{n1}=e^{n/2}$, and $x_{n2}=e^{-n/2}(f(e_0)-g(e_0))$
yields $f_n\to g$ so that $g\in\overline{X}$.
\end{proof}

The preceding lemma implies that the set of
critical linear functionals is given by
$\Omega\setminus\overline{X}=\{\,g\in\frakm^\ast:%
g(e_3)=0\text{ and }g(e_1)g(e_2)\neq 0\,\}$. Let us
compute the relevant unitary representations: Using
$\frakp=\langle e_0,e_2,e_3\rangle$ as a Pukanszky
polarization at $f_s\in\frakm^\ast$ for all $s\in\mR$,
one computes that $\pi_s=\mcK(f_s)=\ind_P^M\chi_{f_s}$
in $L^2(\mR)$ is infinitesimally given by
\begin{align*}
d\pi_s(\dot{e}_0)&=f_0+\xi D_{\xi}-i/2,\\
d\pi_s(\dot{e}_1)&=-D_{\xi},\\
d\pi_s(\dot{e}_2)&=e^{-s}\xi,\\
d\pi_s(\dot{e}_3)&=e^{-s}\;.
\end{align*}
Here $\dot{e}_{\nu}=-ie_{\nu}$ is in the
complexification $\frakm_{\mC}$ of $\frakm$,
$\xi\mdot\,-\,$ is the multiplication operator and
$D_{\xi}=-i\partial_{\xi}$ is the differential operator
in $L^2(\mR)$. We observe that these equations bear a
striking resemblance to the formulas for
$\coAd(\Phi(t,x))f\;(e_\nu)$: simply substitute
$e^{-t}x_1$ by  $\xi$ and $e^tx_2$ by $D_\xi$. On the
other hand, if $g\in\Omega\setminus\overline{X}$,
then $\frakn$ is a Pukanszky polarization at
$g\in\frakm^\ast$ and $\rho=\mcK(g)=\ind_N^M\chi_g$
in $L^2(\mR)$ is given by
\begin{align*}
d\rho(\dot{e}_0)&=-D_{\xi},\\
d\rho(\dot{e}_1)&=e^{\xi}\,g_1,\\
d\rho(\dot{e}_2)&=e^{-\xi}\,g_2,\\
d\rho(\dot{e}_3)&=0.
\end{align*}
Symmetrization gives a linear isomorphism from the
symmetric algebra $\mcS(\frakm_{\mC})=\mcP(\frakm^\ast)$
onto the universal enveloping algebra $\mcU(\frakm_{\mC})$
of $\frakm_{\mC}$, which maps the subspace of
$\Ad(M)$-invariant polynomials onto the center
$Z(\frakm_{\mC})$ of $\mcU(\frakm_{\mC})$,
compare Chapter~3.3 of \cite{CorGre}. Note that
$p\in\mcP(\frakm^\ast)^{\Ad(M)}$ corresponds to
$$W\,=\,\dot{e}_3\dot{e}_0-\frac{1}{2}(\dot{e}_2\dot{e}_1%
+\dot{e}_1\dot{e}_2)-f_0\dot{e}_3\,=\,\dot{e}_3\dot{e}_0-%
\dot{e}_2\dot{e}_1-(f_0-\frac{i}{2})\dot{e}_3$$
in $Z(\frakm_{\mC})$. One verifies easily that
$d\tau(W)=p(h)$ holds for all $h\in\frakm^\ast$ and
$\tau=\mcK(h)$. For the Lie algebra $\frakm$ under
consideration the symmetrization map coincides with
the so-called Duflo isomorphism so that $d\tau(W)=p(h)$
can also be seen as a consequence of Th\'eor\`eme~2
of~\cite{Duflo1}.\\\\
Furthermore we recall that if
$\lambda(m)a\,(y)=a(m^{-1}y)$ denotes the left
regular representation of $M$ in $L^2(M)$, then
$$d\lambda(X)a\,(y)=\frac{d}{dt}_{|t=0}\;a(\exp(-tX)y)$$
defines a representation of $\frakm$ in $\Ccinfty{M}$,
which extends to $\mcU(\frakm_{\mC})$. Note that
$\mcU(\frakm_{\mC})$ acts as an associative algebra
of right invariant vector fields. Let us write
$V\ast a=d\lambda(V)a$ for $V\in\mcU(\frakm_{\mC})$
and $a\in\Ccinfty{M}$. It is known that
$\tau(V\ast a)=d\tau(V)\tau(a)$ holds for all $V,a$
and all unitary representations $\tau$ of $M$.

\begin{lem}\label{eLg_lem:b5_regular}
If $g\in\Omega\setminus\overline{X}$ and
$\rho=\mcK(g)$, then $\bigcap_{s\in\mR}\ker_{\LM}\pi_s%
\not\subset\ker_{\LM}\rho$. In particular $G$ is
primitive $\ast$-regular.
\end{lem}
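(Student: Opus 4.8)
The plan is to exhibit a single function $c\in\LM$ that lies in $\ker_{\LM}\pi_s$ for every $s\in\mR$ but not in $\ker_{\LM}\rho$; this is precisely the relation~(\ref{eLg_equ:no_inclusion_in_LM}) required by assertion~3 of the strategy. The guiding idea is to exploit the central element $W\in Z(\frakm_{\mC})$ attached to the $\coAd(M)$-invariant polynomial $p$, since $p$ already separates $\overline{X}$ from the critical functionals by the preceding lemma. The point is to transport this infinitesimal separation from $\mcU(\frakm_{\mC})$ into the group algebra $\LM$.

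First I would choose $a\in\Ccinfty{M}\subset\LM$ with $\rho(a)\neq 0$; such an $a$ exists because $\rho$ is a nonzero $\ast$-representation and $\Ccinfty{M}$ is dense in $\LM$. Then I set $c=W\ast a=d\lambda(W)a$. Since $W$ acts through $d\lambda$ as a right-invariant differential operator of order two, the function $c$ is again smooth with compact support, so $c\in\LM$. The key tool is the identity $\tau(V\ast a)=d\tau(V)\tau(a)$, valid for all $V\in\mcU(\frakm_{\mC})$, $a\in\Ccinfty{M}$, and every unitary representation $\tau$ of $M$. Applying it to $\tau=\pi_s$ and to $\tau=\rho$ reduces the whole question to the scalars $d\pi_s(W)$ and $d\rho(W)$.

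It then remains to evaluate these scalars by means of $d\tau(W)=p(h)$ for $\tau=\mcK(h)$. For the representations $\pi_s=\mcK(f_s)$ the functionals $f_s$ all lie on the orbit $X=\coAd(G)f$, on which $p$ vanishes identically; hence $d\pi_s(W)=p(f_s)=0$ and consequently $\pi_s(c)=d\pi_s(W)\pi_s(a)=0$ for every $s\in\mR$. For $\rho=\mcK(g)$ the hypothesis $g\in\Omega\setminus\overline{X}$ together with the preceding lemma gives $p(g)\neq 0$, so $d\rho(W)=p(g)\neq 0$ and therefore $\rho(c)=d\rho(W)\rho(a)=p(g)\,\rho(a)\neq 0$. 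This shows $c\in\bigcap_{s\in\mR}\ker_{\LM}\pi_s$ while $c\notin\ker_{\LM}\rho$, which is exactly~(\ref{eLg_equ:no_inclusion_in_LM}). As $\frakg=\frakb_5$ admits no other critical configuration, combining this with assertions~1 and~2 of the strategy yields the primitive $\ast$-regularity of $G$.

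The genuine difficulty, the ``great challenge'' of the preceding remark, is already concealed in the existence of $W$: what makes the argument work is that the orbit closure $\overline{X}$ is cut out by a single $\coAd(M)$-invariant polynomial $p$, whose symmetrization image $W$ is central in $\mcU(\frakm_{\mC})$ and separates the family $\{\pi_s\}$ from $\rho$ at the infinitesimal level. Once $W$ is at hand, the passage $c=W\ast a$ from $\mcU(\frakm_{\mC})$ to $\LM$ is routine; the only substantive point to verify is that $d\pi_s(W)=0$ holds \emph{uniformly} in $s$, and this is guaranteed because the whole family $\{f_s:s\in\mR\}$ sweeps out the single orbit $X$ contained in the zero set of $p$.
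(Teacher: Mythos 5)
Your proposal is correct and follows essentially the same route as the paper: both choose $a\in\Ccinfty{M}$ with $\rho(a)\neq 0$, form the function $W\ast a$, and use the identities $\tau(V\ast a)=d\tau(V)\tau(a)$ and $d\tau(W)=p(h)$ for $\tau=\mcK(h)$ to conclude $\pi_s(W\ast a)=p(f_s)\pi_s(a)=0$ for all $s$ while $\rho(W\ast a)=p(g)\rho(a)\neq 0$, the last inequality coming from the preceding lemma characterizing $\overline{X}\cap\Omega$ as the zero set of $p$. Your additional remarks (smoothness and compact support of $W\ast a$, and the reduction of primitive $\ast$-regularity to this single critical configuration via the strategy of the previous section) only make explicit what the paper leaves implicit.
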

\begin{proof}
Since $\Ccinfty{M}$ is dense in $\LM$, there exists
a function $a\in\Ccinfty{M}$ such that $\rho(a)\neq 0$.
Now $b=W\ast a$ satisfies $\pi_s(b)=d\pi_s(W)\pi_s(a)=%
p(f_s)\pi_s(a)=0$ for all $s\in\mR$ and
$\rho(b)=d\rho(W)\rho(a)=p(g)\rho(a)\neq 0$
because $g\not\in\overline{X}$.
\end{proof}

A priori this result does not seem unlikely because
the nature of $X$ is essentially different from that
of typical non-$\,\ast\,$-regular subsets of
$\frakm^\ast/\coAd(M)$. In the preceding lemma
$X/\coAd(M)$ is a graph over $\frakz\frakm^\ast$ in
the sense that the orbit $\coAd(M)h$ is uniquely
determined by $h\,|\,\frakz\frakm$ for all $h\in X$.
Whereas basic examples of non-$\,\ast\,$-regular
subsets $X$ consist of linear functionals
$h\in\frakm^\ast$ over a common character
$\zeta=h\,|\,\frakz\frakm$ of the center such that
the set of limit points of $X/\coAd(M)$ in
$\frakm^\ast/\coAd(M)$ is not empty.\\\\
Since $\frakg=\frakb_5$ is the only exponential
Lie algebra in dimension $\le 5$ such that there
exist $f\in\frakg^\ast$ in general position with
non-nilpotent, proper stabilizer and critical
functionals $g\in\frakg^\ast$ \wrt the orbit $\coAd(G)f$,
it follows from Lemma~\ref{eLg_lem:b5_regular} that
all exponential Lie groups up to dimension~5 are
primitive $\ast$-regular.\\\\
Note that in the particular case $\frakg=\frakb_5$
the relation $\cap_s\ker_{\mcU(\frakm_{\mC})}\pi_s\not%
\subset\ker_{\mcU(\frakm_{\mC})}\rho$ implies
$\cap_s\ker_{\LM}\pi_s\not\subset\ker_{\LM}\rho$,
but in general, as one might expect, the features
of the universal enveloping algebra do not suffice
for this purpose. However, we anticipate that
$\Ad(M)$-invariant polynomials $p$ corresponding
to elements $W\in Z(\frakm_{\mC})$ will play an
important role in further investigations of
primitive $\ast$-regularity.\\\\
\textbf{Acknowledgment.} The author would
like to thank D.~Poguntke for suggesting the
possibility of proving Proposition~\ref{eLg_prop:%
closed_orbit_is_sufficient} by means of the results
in~\cite{Pog4}. This article owes a lot to his
valuable remarks and comments.

{\small
\bibliographystyle{plain}
\bibliography{literature}
}

\noindent O.~Ungermann\\
Fakult\"at f\"ur Mathematik\\
Universit\"at Bielefeld\\
Postfach 100131\\
D-33501 Bielefeld\\
Germany\\
oungerma@math.uni-bielefeld.de

\end{document}